\documentclass[12pt, reqno]{amsart}
\usepackage{amsmath, amsthm, amscd, mathrsfs, amsfonts, amssymb, graphicx, color, float}
\usepackage[utf8]{inputenc}
\usepackage[bookmarksnumbered, colorlinks, plainpages]{hyperref}
\hypersetup{colorlinks=true,linkcolor=red, anchorcolor=green, citecolor=cyan, urlcolor=red, filecolor=magenta, pdftoolbar=true}

\newtheorem{theorem}{Theorem}[section]
\newtheorem{lemma}[theorem]{Lemma}
\newtheorem{proposition}[theorem]{Proposition}
\newtheorem{corollary}[theorem]{Corollary}
\theoremstyle{definition}
\newtheorem{definition}[theorem]{Definition}
\newtheorem{example}[theorem]{Example}

\theoremstyle{remark}
\newtheorem{remark}[theorem]{Remark}

\numberwithin{equation}{section}

\DeclareMathOperator{\ran}{ran}
\DeclareMathOperator{\sspan}{span}

\begin{document}
	
	\title[Conjugations on Hilbert $C^*$-modules]{Conjugations on Hilbert $C^*$-modules}
	
	\author[M. Faregh]{Mahboubeh Faregh$^1$}
	\address{$^1$Department of Pure Mathematics, Faculty of Mathematical Sciences, Ferdowsi University of Mashhad, P. O. Box 1159, Mashhad 91775, Iran}
	\email{mahboubfaregh@yahoo.com}
	
	\author[R. Eskandari]{Rasoul Eskandari$^2$}
	\address{$^2$Department of Mathematics Education, Farhangian University, P.O. Box 14665-889, Tehran, Iran.}
	\email{Rasoul.eskandari@cfu.ac.ir, eskandarirasoul@yahoo.com}
	
	\author[M. S. Moslehian]{Mohammad Sal Moslehian$^{3*}$}
	\address{$^3$Department of Pure Mathematics, Faculty of Mathematical Sciences, Ferdowsi University of Mashhad, P. O. Box 1159, Mashhad 91775, Iran}
	\email{moslehian@um.ac.ir; msmoslehian@gmail.com}

	\subjclass{46L08, 46L05, 47A05, 47B15, 47B20.}
	\keywords{Hilbert $C^*$-module; $*$-conjugate-automorphism; real $C^*$-algebra; conjugation; polar decomposition.}
	
	\begin{abstract} 
		
		This work initiates a systematic development of conjugation theory within the framework of Hilbert $C^*$-modules, extending classical complex symmetric operator theory beyond the Hilbert space setting. We first explore the foundational structure of $*$-conjugate-automorphisms $\sharp$ on $C^*$-algebras and characterize their existence through isomorphisms with opposite algebras. We show that the set $\mathbb{L}_c(\mathscr{E})$ of all adjointable conjugate-linear operators on a Hilbert $C^*$-module $\mathscr{E}$ possesses the structure of a Hilbert $\mathbb{L}(\mathscr{E})$-module. We also introduce the key notion of $\sharp$-conjugation on a Hilbert $C^*$-module and fully characterize such modules in terms of real forms and real $C^*$-algebras. Moreover, we investigate the geometric properties of $\sharp$-conjugate duals. A central achievement is the development of polar decomposition theory for semiregular linear (conjugate-linear) operators acting on a Hilbert $C^*$-module equipped with a $\sharp$-conjugation $C$, thereby extending the celebrated Garcia--Putinar theorem [Trans. Amer. Math. Soc. 359 (2007), 3913--3931]. As a consequence, we show that any $C$-symmetric unitary operator factors as a product of two $\sharp$-conjugations. A comparative analysis of alternative definitions shows that requiring the naive adjoint condition $\langle Cx,y\rangle = \langle x,Cy\rangle^*$ would force the underlying $C^*$-algebra to be essentially commutative, thus justifying the adopted approach. We supply various examples and counterexamples that extensively illustrate our results.
	\end{abstract}
	
	\maketitle

	\section{Introduction}\label{sec1}
	
	Let $\mathscr{H}$ be a Hilbert space with inner product $\langle \cdot, \cdot \rangle$ and let $C:\mathscr{H}\to \mathscr{H}$ be a \emph{conjugation}, that is, a conjugate-linear operator that is involutive ($C^2 = I_\mathscr{H}$, the identity operator on $\mathscr{H}$) and isometric ($\langle Cx, Cy \rangle = \langle y, x \rangle$ for all $x, y \in \mathscr{H}$). An operator $T \in \mathbb{B}(\mathscr{H})$ is called \emph{complex symmetric} or \emph{$C$-symmetric} (\emph{complex skew-symmetric}, respectively) with respect to a conjugation $C$ if $T = CT^*C$ ($T = -CT^*C$, respectively). If $CT^* TC=T T^*$, then $T$ is called \emph{$C$-normal}. For instance, usual normal operators, truncated Toeplitz operators, and Hankel matrices are examples of $C$-symmetric operators; see \cite{GAR}.
	
	The study of complex symmetric and skew-symmetric operators traces back to Glazman \cite{GLA}, who examined complex symmetric differential operators. In a series of papers, Garcia, Putinar, and Wogen \cite{G1, G2, G3} presented deep results on the subject. Later, Ptak, Simik, and Wicher \cite{PTA} introduced a unified framework for the study of $C$-normal operators. 
	
	Conjugations are connected to extension theory for unbounded symmetric operators and play essential roles in von Neumann algebras including linear $*$-anti-automorphisms \cite{DIX, STO}, appear in the Tomita--Takesaki modular theory for Type III factors \cite{TAK1}, and the theory of $C^*$-algebras \cite{GUO}. 
	
	The field lies at the intersection of operator theory and complex analysis with interesting works related to truncated Toeplitz operators \cite{CAM, KO1} as well as weighted shifts \cite{ZHU, BUD}. Recent years have witnessed renewed interest from non-Hermitian quantum mechanics and spectral analysis, though mathematical developments trail behind physics applications \cite{G4}. 
	
	Several significant papers on the topic concern the structure of Hilbert spaces and operators acting on them.
	
	To the best of our knowledge, this is the first systematic study of conjugations on Hilbert $C^*$-modules from the perspective developed herein. The remainder of the paper is organized as follows.
	
	Section \ref{sec2} presents an overview of the algebraic foundation of the so-called $*$-conjugate-automorphisms $\sharp$ on $C^*$-algebras $\mathscr{A}$. Theorem \ref{thm:characterization_star_conjugation} shows their equivalence to real $C^*$-algebra structures via the fixed-point algebra $\mathscr{A}_{\mathbb{R}}$. The section also relates the existence of such automorphisms to isomorphisms with the opposite algebra (Theorem \ref{thm:opposite_characterization}). In Proposition \ref{uni}, we show that every $*$-conjugate-automorphism on $\mathbb{B}(\mathscr{H})$ is implemented by a conjugate-linear unitary.
	
	In Section \ref{sec3}, we introduce the notions of a $\sharp$-adjointable conjugate-linear operator and $\sharp$-conjugation $C$. We show that the set $\mathbb{L}_c(\mathscr{E})$ of all $\sharp$-adjointable conjugate-linear operators on a Hilbert $C^*$-module $\mathscr{E}$ possesses the structure of a Hilbert $\mathbb{L}(\mathscr{E})$-module. We characterize the $\sharp$-conjugations on a Hilbert $C^*$-module by real forms and real $C^*$-algebras (Theorem \ref{thm:real-form-characterization}).  
	
	In Section \ref{sec4}, for a given $*$-conjugate-automorphism $\sharp$ on a $C^*$-algebra $\mathscr{A}$, we provide Example \ref{Esk1} showing that there exists a Hilbert $\mathscr{A}$-module which does not admit any $\sharp$-conjugation. The induced linear $*$-anti-automorphism $T \mapsto CT^*C$ is studied, and $C$-normal and $C$-symmetric operators are defined. We establish a norm characterization of $C$-hyponormal operators (Theorem \ref{thm:norm-characterization}), a result that is new even in the classical Hilbert space setting (Corollary \ref{thm:norm-characterization1}). Theorem \ref{th:main_corrected} provides a module analogue of the correspondence between symmetric bilinear forms and $C$-symmetric operators in self-dual Hilbert $W^*$-modules. At the end of this section, we investigate the geometric properties of $\sharp$-conjugate duals in context of Hilbert $W^*$-modules.
	
	Section \ref{sec5} deals with semiregular operators and conjugate-partial isometries. Theorem \ref{lemmall} establishes a polar decomposition for semiregular conjugate-linear operators on a Hilbert $C^*$-module endowed with a $\sharp$-conjugation $C$, yielding $T = CJ|T|$ with $J$ a partial isometry. Theorem \ref{thm:decomposition} extends this to semiregular linear operators, establishing the Garcia--Putinar decomposition in the setting of Hilbert $C^*$-module. As a consequence, Corollary \ref{2con} shows that every $C$-symmetric unitary is a product of two conjugations. We demonstrate that, in contrast to the Hilbert space setting, not every unitary on a Hilbert $C^*$-module is a product of two $\sharp$-conjugations.
	
	In Section \ref{sec6}, we compare the adopted definition of conjugation with an alternative natural definition based on $\langle Cx, y \rangle = \langle x, Cy \rangle^*$. Theorem \ref{th conj} shows that this alternative leads to $C(xa) = C(x)a^*$, which forces the coefficient algebra to be essentially commutative. Theorem \ref{th conj2} indicates that in the Hilbert $\mathscr{A}$-module $\mathcal{A}$, every conjugate-linear adjointable operator is self-adjoint and vanishes in noncommutative cases with trivial center, justifying the paper's chosen approach, while Example \ref{th conj3} illustrates that conjugations exist under the adopted framework even for noncommutative algebras.

	The notation and terminology used in this paper are standard and follow those in \cite{KAD2, MUR} for operator algebras and in \cite{LAN, Manu} for the theory of Hilbert $C^*$-modules. However, we recall some key notation before using them.

	\section{$*$-conjugate-automorphisms on $C^*$-algebras} \label{sec2}
	
	In this section, we present a general framework for $*$-conjugate-automorphisms on $C^*$-algebras. 
	
	\begin{definition}\label{Esk0} By a \emph{$*$-conjugate-automorphism} $\sharp$ on a $C^*$-algebra $\mathscr{A}$, we mean a map $\sharp: \mathscr{A} \to {\mathscr{A}}$ satisfying
		\begin{enumerate}
			\item $(\lambda a + b)^\sharp = \bar{\lambda} a^\sharp + b^\sharp$ for all $a,b\in\mathscr{A}, \lambda\in\mathbb{C}$;
			\item $(a^\sharp)^\sharp = a$ for all $a\in\mathscr{A}$;
			\item $(a^\sharp)^*=(a^*)^\sharp$;
			\item $(ab)^\sharp = a^\sharp b^\sharp$ for all $a,b\in\mathscr{A}$.
		\end{enumerate}
		Evidently ${\rm sp}(a^\sharp)=\overline{{\rm sp}(a)}$ for all $a\in \mathscr{A}$. Since the norm of each self-adjoint element $c\in\mathscr{A}$ is equal to its spectral radius $r(c)$, we conclude that
		$$\|a^\sharp\|^2=\|(a^\sharp)^*a^\sharp\|=r((a^\sharp)^*a^\sharp)=r((a^*a)^\sharp)=r(a^*a)=\|a^*a\|=\|a\|^2,$$ 
		proving that the map $\sharp$ is isometric. 
	\end{definition}
	
	\begin{example}\label{mox2}
		\begin{itemize}
			\item[]
			\item [(i)] Let $\mathscr{A}$ be a $C^*$-algebra and let $\sigma:\mathscr{A}\to\mathscr{A}$ be a linear involutive $*$-anti-automorphism ($\sigma(ab)=\sigma(b)\sigma(a)$). Then $a^\sharp:=\sigma(a^*)$, defined for all $a\in\mathscr{A}$, gives rise to a $*$-conjugate-automorphism on $\mathscr{A}$.
			
			\item [(ii)] Let $\mathscr{A}=\mathbb{M}_2(\mathbb{C})$ and let $J=\begin{pmatrix}0&1\\ -1&0\end{pmatrix}$. Note that $\overline{J}=J$, $J^*=J^t=-J$, and $J^2=-I_2$. Define
			$A^\sharp:=J\overline{A}J^*$ for $A\in\mathbb{M}_2(\mathbb{C})$. Then $\sharp$ is a $*$-conjugate-automorphism on $\mathbb{M}_2(\mathbb{C})$.
		\end{itemize}
	\end{example}
	
	We next investigate the natural structure of $*$-conjugate-automorphisms on a $C^*$-algebra. 
	
	A real Banach $*$-algebra $\mathscr{A}$ is called a \emph{real $C^*$-algebra} if there exists a norm $\|\cdot\|$ on its complexification $\mathscr{A}_{\mathbb{C}}$ extending the norm on $\mathscr{A}$ such that $\mathscr{A}_{\mathbb{C}}$ is a complex $C^*$-algebra and $\mathscr{A}$ is a real $*$-subalgebra closed under this norm. The reader is referred to the monographs \cite{Goodearl, LI}.
	
	\begin{proposition}\cite[Corollary 5.2.11 and Proposition 7.3.4]{LI}
		Let $\mathscr{A}$  be a real Banach $*$-algebra. The following statements are equivalent:
		\begin{enumerate}
			\item[(a)] $\mathscr{A}$ is a real $C^*$-algebra;
			\item[(b)] $\mathscr{A}$ is isometrically isomorphic to a norm closed $*$-subalgebra of $\mathbb{B}(\mathscr{H})$ for a real Hilbert space $\mathscr{H}$;
			\item[(c)] $1 + a^*a$ is invertible in $\mathscr{A}$ (if $\mathscr{A}$ is nonunital, then we consider its unitization) and $\|a^*a\| = \|a\|^2$, for all $a\in \mathscr{A}$;
			\item [(d)] The inequality $\|a^*\|\,\|a\| \leq \|a^*a+b^*b\|$ holds for all $a, b\in \mathscr{A}$;
			\item[(e)] $\mathscr{A}$ is hermitian (that is, ${\rm sp}(a) \subseteq \mathbb{R}$ for all self-adjoint elements $a \in \mathscr{A}$) and $\|a^*a\| = \|a\|^2$, for all $a\in \mathscr{A}$.
		\end{enumerate}
	\end{proposition}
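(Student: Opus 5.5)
The plan is to prove the cycle (a)$\Rightarrow$(b)$\Rightarrow$(e)$\Rightarrow$(c)$\Rightarrow$(d)$\Rightarrow$(a). The key device throughout is passing between $\mathscr{A}$ and its complexification $\mathscr{A}_{\mathbb{C}}=\mathscr{A}+i\mathscr{A}$, with the involution $(a+ib)^*=a^*-ib^*$ and the conjugation $\overline{a+ib}=a-ib$. This conjugation is a conjugate-linear $*$-automorphism of $\mathscr{A}_{\mathbb{C}}$ whose fixed points are exactly $\mathscr{A}$.

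\emph{(a)$\Rightarrow$(b).} Apply the complex Gelfand--Naimark theorem to $\mathscr{A}_{\mathbb{C}}$ to obtain an isometric $*$-representation $\pi$ on a complex Hilbert space $\mathscr{K}$. Regard $\mathscr{K}$ as a real Hilbert space $\mathscr{K}_{\mathbb{R}}$ with inner product $\operatorname{Re}\langle\cdot,\cdot\rangle$. The restriction of $\pi$ to $\mathscr{A}$ then gives real-linear bounded operators on $\mathscr{K}_{\mathbb{R}}$, the real adjoint agrees with the complex adjoint, and the operator norms coincide. Hence $\mathscr{A}$ is isometrically $*$-isomorphic to a closed real $*$-subalgebra of $\mathbb{B}(\mathscr{K}_{\mathbb{R}})$.

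\emph{(b)$\Rightarrow$(e).} The $C^*$-identity is inherited from $\mathbb{B}(\mathscr{H})$. For hermitian-ness, complexify $\mathscr{H}$ to $\mathscr{H}_{\mathbb{C}}$. A self-adjoint $a\in\mathbb{B}(\mathscr{H})$ extends to a self-adjoint operator on $\mathscr{H}_{\mathbb{C}}$, so its spectrum is real. The spectrum of $a$ in the real algebra is read off from invertibility of $a-\lambda$ for $\lambda\in\mathbb{C}$, which is done via the complexification as usual. Spectral permanence for closed $*$-subalgebras of $C^*$-algebras then transfers this conclusion to $\mathscr{A}$.

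\emph{(e)$\Rightarrow$(c).} First I would show that the norm of a self-adjoint element equals its spectral radius; this follows from $\|h^{2^n}\|=\|h\|^{2^n}$ and Gelfand's formula. The needed fact is then that $\operatorname{sp}(a^*a)\subseteq[0,\infty)$, which makes $1+a^*a$ invertible. Hermitian-ness only gives $\operatorname{sp}(a^*a)\subseteq\mathbb{R}$, so I would run the Shirali--Ford/Kaplansky argument. Suppose $-a^*a$ had a positive spectral value. Using real functional calculus on the commutative closed subalgebra generated by $a^*a$, write $a^*a=u-v$ with $uv=0$, $u,v\geq 0$ and $v\neq 0$. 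Set $b=a v^{1/2}$. Then $-b^*b=v^2\geq0$, while $bb^*$ has real spectrum. Writing $b=h+k$ with $h^*=h$ and $k^*=-k$, one computes $b^*b+bb^*=2(h^2-k^2)$, whose spectrum is nonnegative because hermitian-ness gives $\operatorname{sp}(h^2)\subseteq[0,\infty)$ and $\operatorname{sp}(-k^2)\subseteq[0,\infty)$, the latter since $ik$ is self-adjoint in the complexification. Combining these with $\operatorname{sp}(bb^*)\cup\{0\}=\operatorname{sp}(b^*b)\cup\{0\}$ forces $v=0$, a contradiction. I expect this step to be the main obstacle. The subtle point is doing the positivity bookkeeping in a real algebra, where "positive" must first be defined spectrally and shown to be closed under addition. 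For that I would use the estimate $\|t-x\|\leq t$ for $0\le x\le t$ together with the norm/spectral-radius equality on self-adjoint elements.

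\emph{(c)$\Rightarrow$(d) and (d)$\Rightarrow$(a).} From (c), positivity of $a^*a$ and $b^*b$ gives $\|a^*a\|\leq\|a^*a+b^*b\|$, and $\|a^*\|\,\|a\|=\|a\|^2=\|a^*a\|$, which yields (d). For (d)$\Rightarrow$(a), I would define on $\mathscr{A}_{\mathbb{C}}$ the norm
\[
\|x\|:=\sup\{\|\pi(x)\|\},
\]
the supremum taken over $*$-representations $\pi$ of $\mathscr{A}$ on real Hilbert spaces, each extended complex-linearly to $\mathscr{A}_{\mathbb{C}}$ acting on the complexified space. Condition (d), which is the real analogue of the Vowden/Glimm--Kadison condition, guarantees two things: enough states for the real GNS construction (states being positive functionals $f$ with $f(a^*a)\ge0$), and $\sup_\pi\|\pi(a)\|=\|a\|$ for $a\in\mathscr{A}$. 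Thus this norm is a $C^*$-norm extending the norm of $\mathscr{A}$. Since $\mathscr{A}$ is complete, $\mathscr{A}_{\mathbb{C}}$ is complete, as it is topologically $\mathscr{A}\oplus\mathscr{A}$. Consequently $\mathscr{A}_{\mathbb{C}}$ is a complex $C^*$-algebra containing $\mathscr{A}$ as a closed real $*$-subalgebra.
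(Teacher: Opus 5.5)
The paper does not prove this proposition; it imports it from Li's monograph. Your self-contained cycle therefore has to stand on its own, and it has a genuine gap exactly where the real theory departs from the complex one.

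In (e)$\Rightarrow$(c) you claim ${\rm sp}(-k^2)\subseteq[0,\infty)$ for skew-adjoint $k$ ``since $ik$ is self-adjoint in the complexification.'' But (e) only constrains self-adjoint elements of $\mathscr{A}$. Hermitian-ness of $\mathscr{A}_{\mathbb{C}}$ is not assumed; it is a consequence of what you are trying to prove. Indeed $-k^2=k^*k$, so your claim is a special case of the positivity of $a^*a$ you are after.

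Hermitian-ness alone cannot give it. Take $\mathbb{R}\oplus\mathbb{R}$ with $(s,t)^*=(t,s)$: its self-adjoint elements are real multiples of $1$, so it is hermitian, yet $k=(1,-1)$ is skew with $-k^2=-1$. What rules this out is the $C^*$-identity (there $x=(1,0)$ has $x^*x=0$), and you must use it explicitly. One repair goes as follows.
\begin{itemize}
\item Put $h=k^2=h^*$ and use the real functional calculus of $h$. Suppose $\mu_0\in{\rm sp}(h)$ with $\mu_0>0$.
\item Choose a continuous $f$ supported in $(0,\infty)$ with $f(\mu_0)\neq0$, and set $e=f(h)$, $s=(h_+)^{1/2}$. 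Both are self-adjoint and commute with $k$.
\item Then $y=(s+k)e$ satisfies $y^*y=e(s^2-k^2)e=eh_-e=0$, so $y=0$ by the $C^*$-identity.
\item Hence $se=-ke$ is both self-adjoint and skew-adjoint, so $se=0$. This contradicts $\mu_0^{1/2}f(\mu_0)\neq0$.
\end{itemize}
With this lemma inserted, your Shirali--Ford bookkeeping goes through.

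Your (d)$\Rightarrow$(a) is asserted rather than proved. The two things (d) is said to ``guarantee''---enough states, and $\sup_\pi\|\pi(a)\|=\|a\|$---are the whole content of the implication.
\begin{itemize}
\item The isometry claim already contains $\|a^*\|=\|a\|$, whereas (d) with $b=0$ yields only $\|a^*a\|=\|a^*\|\,\|a\|$. Removing the isometric-involution hypothesis is precisely the hard Glimm--Kadison--Vowden step in the complex case.
\item For the states, a Hahn--Banach construction of a positive functional $f$ with $\|f\|\le 1$ and $f(a^*a)=\|a^*a\|$ needs $\|a^*a+p\|\geq\|a^*a\|$ for every finite sum $p=\sum_j b_j^*b_j$. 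Condition (d) gives this only for a single term. Merging a sum into one square requires square roots, i.e.\ hermitian-ness and functional calculus, which you have not derived from (d).
\end{itemize}
You need an actual argument that (d) implies (e). After that you can close the cycle: (e)$\Rightarrow$(c), positivity and the real GNS construction give (b), and complexifying the Hilbert space gives (a). As written, this implication is a citation in disguise.
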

	
	A real $C^*$-algebra $\mathscr{A}$ has complexification $\mathscr{A}_{\mathbb C}=\mathscr{A}\otimes_{\mathbb R}\mathbb{C}$, meaning the completion of the algebraic tensor product with respect to its canonical $C^*$-norm. Algebraically, it can be identified with $\mathscr{A}\oplus i\mathscr{A}$, with operations $(a+ib)(a'+ib')=(aa'-bb')+i(ab'+ba')$, $(a+ib)^*=a^*-ib^*$, and $(a+ib)^\sharp=a-ib$; see the comprehensive article \cite{MP} for more information.

	\begin{theorem}\label{thm:characterization_star_conjugation}
		Let $\mathscr{A}$ be a $C^*$-algebra, and let ${\rm Conj}_*(\mathscr{A})$ denote the set of all $*$-conjugate-automorphisms of $\mathscr{A}$ in the sense of Definition~\ref{Esk0}. Then the following statements are equivalent.
		\begin{enumerate}
			\item[(i)] ${\rm Conj}_*(\mathscr{A})\neq\varnothing$.
			\item[(ii)] There exists a closed real $*$-subalgebra $\mathscr{A}_\mathbb{R}$ of $\mathscr{A}$ such that $$\mathscr{A}=\mathscr{A}_\mathbb{R}\oplus i\mathscr{A}_\mathbb{R}$$ as real Banach spaces.
			\item[(iii)] There exists a real $C^*$-algebra $\mathscr{B}$ such that $\mathscr{A}$ is $*$-isomorphic, as a complex $C^*$-algebra, to the complexification $$\mathscr{B}_\mathbb{C}=\mathscr{B}\otimes_\mathbb{R}\mathbb{C}.$$
		\end{enumerate}
	\end{theorem}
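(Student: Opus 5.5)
We prove the cycle (i)$\Rightarrow$(ii)$\Rightarrow$(iii)$\Rightarrow$(i).

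\emph{(i)$\Rightarrow$(ii).} Given $\sharp\in{\rm Conj}_*(\mathscr{A})$, put
$$\mathscr{A}_\mathbb{R}:=\{a\in\mathscr{A}: a^\sharp=a\}.$$
By properties (1) and (4) of Definition~\ref{Esk0}, $\mathscr{A}_\mathbb{R}$ is a real subalgebra, and by (3) it is closed under $*$. Since $\sharp$ is isometric, hence continuous, $\mathscr{A}_\mathbb{R}$ is norm closed. For $a\in\mathscr{A}$ define
$$\operatorname{Re}_\sharp a=\tfrac12(a+a^\sharp), \qquad \operatorname{Im}_\sharp a=\tfrac1{2i}(a-a^\sharp).$$
Conjugate-linearity and involutivity show that both lie in $\mathscr{A}_\mathbb{R}$ and that $a=\operatorname{Re}_\sharp a+i\operatorname{Im}_\sharp a$. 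If $x+iy=0$ with $x,y\in\mathscr{A}_\mathbb{R}$, applying $\sharp$ gives $x-iy=0$, so $x=y=0$. Hence the sum is direct. Both projections are bounded, with norm at most $1$, so this is a topological direct sum of real Banach spaces.

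\emph{(ii)$\Rightarrow$(iii).} Take $\mathscr{B}=\mathscr{A}_\mathbb{R}$ with the restricted norm and involution. It is a norm-closed real $*$-subalgebra of the complex $C^*$-algebra $\mathscr{A}$. By the equivalent conditions (a)$\Leftrightarrow$(b) of the cited Proposition (or directly from the definition, using the complex $C^*$-algebra $\mathscr{A}$ itself), it is a real $C^*$-algebra: the $C^*$-identity is inherited, and every self-adjoint element has real spectrum in $\mathscr{A}$.

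Next consider the map $\Phi:\mathscr{B}\oplus i\mathscr{B}\to\mathscr{A}$, $\Phi(a+ib)=a+ib$. Using the algebraic description of $\mathscr{B}_\mathbb{C}$ recalled before the theorem, $\Phi$ is a complex-linear $*$-homomorphism, and by (ii) it is bijective. Thus $\mathscr{A}$ is a complex $C^*$-algebra structure on the complexification. The main subtlety is that the norm on $\mathscr{B}_\mathbb{C}$ is ``its canonical $C^*$-norm.'' This is settled by uniqueness of $C^*$-norms on a complex $*$-algebra that is complete under a $C^*$-norm: pulling back the norm of $\mathscr{A}$ via $\Phi$ gives a complete $C^*$-norm extending that of $\mathscr{B}$. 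A $*$-isomorphism between $C^*$-algebras is automatically isometric, so $\Phi$ is the required $*$-isomorphism. This identification of norms is the step I expect to need the most care.

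\emph{(iii)$\Rightarrow$(i).} Let $\Phi:\mathscr{B}_\mathbb{C}\to\mathscr{A}$ be a $*$-isomorphism. On $\mathscr{B}_\mathbb{C}$ define $(a+ib)^\sharp=a-ib$. Direct computation with the product formula verifies (1)--(4). Conjugate-linearity and involutivity are immediate, and the computation
$$\bigl((a-ib)(a'-ib')\bigr)=(aa'-bb')-i(ab'+ba')$$
gives multiplicativity. Compatibility with $*$ is also checked directly. Continuity of this map on the completed tensor product follows because it is the extension of ${\rm id}\otimes(\text{complex conjugation})$, which is isometric for the canonical norm by the computation in Definition~\ref{Esk0} applied on the dense algebraic part.

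Finally, transport the map to $\mathscr{A}$ by setting $x^\sharp:=\Phi\bigl(\Phi^{-1}(x)^\sharp\bigr)$. Since $\Phi$ is complex-linear, this preserves conjugate-linearity and properties (2)--(4), so ${\rm Conj}_*(\mathscr{A})\neq\varnothing$.
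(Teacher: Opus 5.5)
Your proof is correct and follows the same cycle as the paper: the fixed-point algebra of $\sharp$ for (i)$\Rightarrow$(ii), the identity map $\mathscr{B}\oplus i\mathscr{B}\to\mathscr{A}$ for (ii)$\Rightarrow$(iii), and transport of the canonical map $a+ib\mapsto a-ib$ for (iii)$\Rightarrow$(i). You check that $\mathscr{A}_\mathbb{R}$ is a real $C^*$-algebra inside (ii)$\Rightarrow$(iii), for an arbitrary subalgebra as in (ii), whereas the paper does this only for the fixed-point algebra; together with your explicit appeal to uniqueness of $C^*$-norms, this fills in details the paper leaves implicit. Your continuity remark in (iii)$\Rightarrow$(i) is unnecessary, because $\mathscr{B}\otimes_\mathbb{R}\mathbb{C}=\mathscr{B}\oplus i\mathscr{B}$ needs no completion and the isometry of $\sharp$ follows automatically from the computation in Definition~\ref{Esk0}.
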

	
	\begin{proof}
		(i) $\Longrightarrow$ (ii). Let $\sharp$ be a $*$-conjugate-automorphism of $\mathscr{A}$. Set $\mathscr{A}_\mathbb{R}=\{a\in\mathscr{A}:a^\sharp=a\}$. Since $\sharp$ is additive and multiplicative, $\mathscr{A}_\mathbb{R}$ is closed under addition and multiplication. It is also closed under multiplication by real scalars. Furthermore, if $x\in\mathscr{A}_\mathbb{R}$, then $(x^*)^\sharp=(x^\sharp)^*=x^*$, so $x^*\in\mathscr{A}_\mathbb{R}$. Thus, $\mathscr{A}_\mathbb{R}$ is a real $*$-subalgebra of $\mathscr{A}$.
		
		For $a\in\mathscr{A}$, define $x=(a+a^\sharp)/2$ and $y=(a-a^\sharp)/(2i)$. Using the conjugate-linearity and involutivity of $\sharp$, we obtain $x^\sharp=x$ and $y^\sharp=y$. Hence, $x,y\in\mathscr{A}_\mathbb{R}$ and $a=x+iy$.
		
		To prove uniqueness, suppose that $x+iy=0$ with $x,y\in\mathscr{A}_\mathbb{R}$. Applying $\sharp$ gives $x-iy=0$. Adding and subtracting the two equations yields $x=0$ and $y=0$. Therefore, $\mathscr{A}=\mathscr{A}_\mathbb{R}\oplus i\mathscr{A}_\mathbb{R}$ as real vector spaces.
		
		Since $\mathscr{A}_\mathbb{R}$ is a closed subspace of the Banach space $\mathscr{A}$, it is complete. Moreover, for every $x\in\mathscr{A}_\mathbb{R}$, $\|x^*x\|=\|x\|^2$. Also, $1+x^*x$ is invertible in the unitization of $\mathscr{A}_\mathbb{R}$. Indeed, it is invertible in the unitization of $\mathscr{A}$, and its inverse is fixed by the extended map $\sharp$. Therefore Proposition 2.3 implies that $\mathscr{A}_\mathbb{R}$ is a real $C^*$-algebra. In addition, $\mathscr{A}$ is canonically the complexification of $\mathscr{A}_\mathbb{R}$; see \cite[Subsection 6.3]{MP}.

		(ii) $\Longrightarrow$ (iii). Let $\mathscr{B}:=\mathscr{A}_\mathbb{R}$. The map $\Phi:\mathscr{B}\otimes_\mathbb{R}\mathbb{C}\to\mathscr{A}$ given by $\Phi(x\otimes(\lambda+i\mu))=(\lambda+i\mu)x$ can be extended to a complex $*$-isomorphism. 
		
		(iii) $\Longrightarrow$ (i). Suppose that $\mathscr{A}\cong\mathscr{B}\otimes_\mathbb{R}\mathbb{C}$ for a real $C^*$-algebra $\mathscr{B}$. The canonical $*$-conjugate-automorphism on $\mathscr{B}\otimes_\mathbb{R}\mathbb{C}$ is defined by $(b\otimes\lambda)^\sharp=b\otimes\overline{\lambda}$. Transporting this map through a complex $*$-isomorphism from $\mathscr{B}\otimes_\mathbb{R}\mathbb{C}$ onto $\mathscr{A}$ gives a $*$-conjugate-automorphism of $\mathscr{A}$.
	\end{proof}
	
	The next theorem shows that $\mathscr{A}\cong\mathscr{A}^{\mathrm{op}}$ is equivalent to the existence of a conjugate-linear $*$-automorphism, but it is not, in general, by itself a characterization of the $*$-conjugate-automorphisms in Definition~\ref{Esk0}. Furthermore, it asserts that
	\[
	\mathscr{A}\cong\mathscr{A}^{\mathrm{op}}\iff\mathscr{A}\cong\overline{\mathscr{A}}.
	\]
	
	\begin{theorem}\label{thm:opposite_characterization}
		Let $\mathscr{A}$ be a $C^*$-algebra. The following are equivalent.
		\begin{enumerate}
			\item[(i)] There exists a conjugate-linear, multiplicative, $*$-preserving bijection $\rho:\mathscr{A}\to\mathscr{A}$. (No involutivity of $\rho$ is assumed.)
			\item[(ii)] $\mathscr{A}$ is linear $*$-isomorphic to its opposite $C^*$-algebra $\mathscr{A}^{\mathrm{op}}$.
			\item[(iii)] $\mathscr{A}$ is linear $*$-isomorphic to its conjugate $C^*$-algebra $\overline{\mathscr{A}}$.
		\end{enumerate}
	\end{theorem}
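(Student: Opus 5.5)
The plan is to fix the two auxiliary algebras concretely and then prove (i)$\Leftrightarrow$(ii) and (i)$\Leftrightarrow$(iii) by explicit constructions.

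\emph{The two algebras.} The opposite algebra $\mathscr{A}^{\mathrm{op}}$ is the same Banach space with the same linear structure and involution, but with product $a\cdot_{\mathrm{op}} b=ba$. The conjugate algebra $\overline{\mathscr{A}}$ is the same set with the same product, involution and norm, but with scalar multiplication $\lambda\cdot \bar a=\overline{\bar\lambda a}$. Both are $C^*$-algebras.

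\emph{(i)$\Leftrightarrow$(iii).} A map $\rho:\mathscr{A}\to\mathscr{A}$ is conjugate-linear, multiplicative, $*$-preserving and bijective exactly when the same map, viewed as $\mathscr{A}\to\overline{\mathscr{A}}$, is a complex-linear $*$-isomorphism. So this equivalence amounts to unwinding the definitions; I only need to check that linearity into $\overline{\mathscr{A}}$ is the same as conjugate-linearity into $\mathscr{A}$.

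\emph{(i)$\Rightarrow$(ii).} Given $\rho$, define $\phi(a)=\rho(a)^*=\rho(a^*)$. Then:
\begin{itemize}
\item $\phi$ is linear, since $\rho$ and $*$ are both conjugate-linear, so their composite is linear.
\item $\phi(ab)=\rho(b)^*\rho(a)^*=\phi(b)\phi(a)=\phi(a)\cdot_{\mathrm{op}}\phi(b)$, so $\phi$ is multiplicative into $\mathscr{A}^{\mathrm{op}}$.
\item $\phi(a^*)=\rho(a)=\phi(a)^*$, so $\phi$ is $*$-preserving.
\item $\phi$ is a bijection, being a composite of bijections.
\end{itemize}
Hence $\phi$ is a linear $*$-isomorphism $\mathscr{A}\to\mathscr{A}^{\mathrm{op}}$.

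\emph{(ii)$\Rightarrow$(i).} Conversely, given a linear $*$-isomorphism $\phi:\mathscr{A}\to\mathscr{A}^{\mathrm{op}}$, set $\rho(a)=\phi(a)^*$. This is conjugate-linear and bijective. It is multiplicative because $\phi(ab)=\phi(b)\phi(a)$ in $\mathscr{A}$, so $\rho(ab)=\phi(a)^*\phi(b)^*=\rho(a)\rho(b)$. It is $*$-preserving because $\rho(a^*)=\phi(a^*)^*=\phi(a)=\rho(a)^*$. No involutivity enters at any point, which matches the parenthetical remark in (i).

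The only step needing care is the bookkeeping of which product and which scalar multiplication are in force in each target algebra. Apart from that there is no real obstacle. Isometry is automatic for $*$-isomorphisms of $C^*$-algebras, so there are no norm issues.
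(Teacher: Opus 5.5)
Your proof is correct and matches the paper's: the maps $a\mapsto\rho(a)^*$ and $a\mapsto\phi(a)^*$ that you use for (i)$\Leftrightarrow$(ii) are exactly the ones in the paper. The only cosmetic difference is that you connect (iii) directly to (i) by regarding $\rho$ as a linear map into $\overline{\mathscr{A}}$, whereas the paper connects (iii) to (ii) through the standard $*$-isomorphism $\overline{\mathscr{A}}\to\mathscr{A}^{\mathrm{op}}$, $\overline{a}\mapsto a^*$; the two arguments differ only by composition with that map.
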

	
	\begin{proof}
		If $\rho$ is as in (i), then $\Phi_\rho:\mathscr{A}\to\mathscr{A}^{\mathrm{op}}$ defined as $\Phi_\rho(a)=(\rho(a))^*$ is a linear $*$-isomorphism, and so (ii) holds. Conversely, if (ii) holds, and $\Phi:\mathscr{A}\to\mathscr{A}^{\mathrm{op}}$ is a linear $*$-isomorphism, then $\rho(a)=\Phi(a)^*$ is a conjugate-linear, multiplicative, $*$-preserving bijection of $\mathscr{A}$, and so (i) holds.
		
		The standard map 
		\[
		\overline{\mathscr{A}}\to\mathscr{A}^{\mathrm{op}},\qquad \overline a\longmapsto a^*
		\]
		is a $*$-isomorphism and shows that the equivalence of (ii) and (iii).
	\end{proof}
	
	Next, suppose that $\mathscr{A}$ is a $C^*$-algebra. Let 
	\[
	\tau:\mathscr{A}^{\mathrm{op}}\to\mathscr{A},\qquad \tau(a^{\mathrm{op}})=a^*.
	\]
	Then $\tau$ is a conjugate-linear, multiplicative, $*$-preserving bijection. It is straightforward to verify that $\mathscr{A}$ admits a $*$-conjugate-automorphism $\sharp$ if and only if there exists a linear $*$-isomorphism $\Phi:\mathscr{A}\to\mathscr{A}^{\mathrm{op}}$ such that
	$(\tau\circ\Phi)^2={\rm id}_{\mathscr{A}}$.
	
	Let us denote the group of linear $*$-automorphisms of $\mathscr{A}$ by ${\rm Aut}_*(\mathscr{A})$.
	
	\begin{theorem}\label{thm:all_star_conjugations}
		Suppose that $\mathscr{A}$ admits a $*$-conjugate-automorphism $\sharp_0$. Then
		\[
		{\rm Conj}_*(\mathscr{A})=\left\{\alpha\circ\sharp_0:\alpha\in{\rm Aut}_*(\mathscr{A}),\ \alpha\circ\sharp_0\circ\alpha\circ\sharp_0={\rm id}_{\mathscr{A}}\right\}.
		\]
	\end{theorem}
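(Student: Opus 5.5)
We prove the two inclusions directly from Definition~\ref{Esk0}, using only that $\sharp_0$ is a conjugate-linear, multiplicative, $*$-preserving involution.

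\emph{Inclusion $\supseteq$.} Let $\alpha\in{\rm Aut}_*(\mathscr{A})$ with $\alpha\circ\sharp_0\circ\alpha\circ\sharp_0={\rm id}_{\mathscr{A}}$, and put $\sharp:=\alpha\circ\sharp_0$. We check the four conditions of Definition~\ref{Esk0}.
\begin{enumerate}
\item Conjugate-linearity holds because $\alpha$ is linear and $\sharp_0$ is conjugate-linear.
\item Multiplicativity holds since both factors are multiplicative.
\item $*$-compatibility: $(\alpha(a^{\sharp_0}))^*=\alpha((a^{\sharp_0})^*)=\alpha((a^*)^{\sharp_0})$.
\item Involutivity, $(a^\sharp)^\sharp=\alpha(\alpha(a^{\sharp_0})^{\sharp_0})=a$, is exactly the hypothesis $\alpha\circ\sharp_0\circ\alpha\circ\sharp_0={\rm id}_{\mathscr{A}}$.
\end{enumerate}
Hence $\sharp\in{\rm Conj}_*(\mathscr{A})$.

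\emph{Inclusion $\subseteq$.} Let $\sharp\in{\rm Conj}_*(\mathscr{A})$ and define $\alpha:=\sharp\circ\sharp_0$, so that $\alpha(a)=(a^{\sharp_0})^{\sharp}$. Since $\sharp_0^2={\rm id}$, we get $\alpha\circ\sharp_0=\sharp$.
\begin{enumerate}
\item $\alpha$ is linear, as a composition of two conjugate-linear maps: $\alpha(\lambda a)=(\bar\lambda a^{\sharp_0})^\sharp=\lambda\alpha(a)$.
\item $\alpha$ is multiplicative and $*$-preserving, since both factors are.
\item $\alpha$ is bijective, with inverse $\sharp_0\circ\sharp$, because each factor is an involution.
\end{enumerate}
Thus $\alpha\in{\rm Aut}_*(\mathscr{A})$. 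Finally, $\alpha\circ\sharp_0\circ\alpha\circ\sharp_0=\sharp\circ\sharp={\rm id}_{\mathscr{A}}$, so $\sharp$ lies in the right-hand set.

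No analytic input is needed: continuity of $\alpha$ is automatic for a $*$-isomorphism of $C^*$-algebras, and in any case each of $\sharp$, $\sharp_0$ is isometric by the remark after Definition~\ref{Esk0}. The only step requiring care is recognising that the constraint $\alpha\circ\sharp_0\circ\alpha\circ\sharp_0={\rm id}$ is precisely involutivity of $\alpha\circ\sharp_0$. It is not automatic: for example, $\alpha$ must satisfy $\alpha^{-1}=\sharp_0\alpha\sharp_0$.
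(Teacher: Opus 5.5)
Your proof is correct and follows the same route as the paper. The paper also sets $\alpha:=\sharp\circ\sharp_0$, notes that it is a linear $*$-automorphism with $\alpha\circ\sharp_0=\sharp$, and calls the reverse inclusion clear, since $\alpha\circ\sharp_0\circ\alpha\circ\sharp_0={\rm id}_{\mathscr{A}}$ is exactly involutivity. You simply write out the verifications the paper leaves implicit: linearity, $*$-compatibility, bijectivity via $\sharp_0\circ\sharp$, and the four axioms of Definition~\ref{Esk0}.
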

	
	\begin{proof}
		Let $\sharp\in{\rm Conj}_*(\mathscr{A})$ and define $\alpha=\sharp\circ\sharp_0$. Then, $\alpha\in{\rm Aut}(\mathscr{A})$. The reverse inclusion is clear. 
	\end{proof}
	
	Let $S:\mathscr{H}\to\mathscr{H}$ be a conjugate-linear operator on Hilbert space $\mathscr{H}$. Its adjoint $S^*$ is the conjugate-linear operator defined by $\langle Sx, y \rangle = \overline{\langle x, S^* y \rangle}$ for all $x,y \in \mathscr{H}$.
	A conjugate-linear operator $U:\mathscr{H}\to\mathscr{H}$ is called \emph{conjugate-linear unitary} if it is surjective and $\langle Ux, Uy\rangle = \langle y, x\rangle$ for all $x,y\in\mathscr{H}$. Therefore, a conjugation on $\mathscr{H}$ is a conjugate-linear unitary $C$ satisfying $C^2 = I_\mathscr{H}$.

	The following proposition may be known in the literature, however, we provide a proof for the sake of reader's convenience. Every rank-one projection on a Hilbert space $\mathscr{H}$ is of the form $P_x=x\overline{\otimes} x$ for some unit vector $x\in\mathscr{H}$. Recall that for given vectors $x,y \in \mathscr{H}$, we can define the rank-one operator $x\overline{\otimes} y$ by $(x\overline{\otimes} y)z=\langle z,y\rangle x,\,\, z\in \mathscr{H}$. 
	
	\begin{proposition}\label{uni}
		Let $\mathscr{H}$ be a complex Hilbert space and let $\sharp:\mathbb{B}(\mathscr{H})\to \mathbb{B}(\mathscr{H})$ be a $*$-conjugate-automorphism. Then there exists a conjugate-linear unitary operator $U:\mathscr{H}\to \mathscr{H}$ such that $T^\sharp=UTU^{-1}$ for all $T\in\mathbb{B}(\mathscr{H})$.
	\end{proposition}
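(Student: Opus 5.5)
The idea is to reduce to the classical fact that every linear $*$-automorphism of $\mathbb{B}(\mathscr{H})$ is inner, implemented by a unitary. To do this, I first fix a reference conjugation and compose it with $\sharp$.

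Fix an orthonormal basis $\{e_j\}$ of $\mathscr{H}$. Let $J_0$ be the associated conjugation, $J_0(\sum \lambda_j e_j)=\sum\overline{\lambda_j}e_j$. Then $J_0$ is a conjugate-linear unitary with $J_0^2=I_\mathscr{H}$. The map $\sharp_0(T)=J_0TJ_0$ is a $*$-conjugate-automorphism of $\mathbb{B}(\mathscr{H})$. Conjugate-linearity and multiplicativity are immediate. The identity $(J_0TJ_0)^*=J_0T^*J_0$ follows from $\langle J_0x,J_0y\rangle=\langle y,x\rangle$.

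As in the proof of Theorem \ref{thm:all_star_conjugations}, set $\alpha=\sharp\circ\sharp_0$. This map is complex linear, since it is a composition of two conjugate-linear maps. It is also multiplicative, $*$-preserving, and bijective (its inverse is $\sharp_0\circ\sharp$). Hence $\alpha\in{\rm Aut}_*(\mathbb{B}(\mathscr{H}))$.

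The key step is that every $*$-automorphism of $\mathbb{B}(\mathscr{H})$ is inner. I would prove this directly using rank-one projections, which is presumably why $P_x=x\overline{\otimes}x$ is recalled just before the statement.

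\begin{itemize}
\item First, $\alpha$ maps minimal projections to minimal projections. A projection $P$ has rank one iff $P\mathbb{B}(\mathscr{H})P=\mathbb{C}P$, and this algebraic property is preserved by $\alpha$.
\item Fix a unit vector $x_0$ and choose a unit vector $y_0$ with $\alpha(P_{x_0})=P_{y_0}$.
\item Define $W(Tx_0)=\alpha(T)y_0$. This is well defined and isometric, because
\[
\|\alpha(T)y_0\|^2=\langle \alpha(P_{x_0}T^*TP_{x_0})y_0,y_0\rangle=\|Tx_0\|^2 .
\]
This uses $P_{x_0}T^*TP_{x_0}=\|Tx_0\|^2P_{x_0}$.
\item The map $W$ is surjective, since $\alpha$ is onto and $\{\alpha(T)y_0\}=\{Sy_0:S\in\mathbb{B}(\mathscr{H})\}=\mathscr{H}$.
\item Finally, $W T=\alpha(T)W$ on vectors $Sx_0$, so $\alpha(T)=WTW^{*}$ with $W$ unitary.
\end{itemize}

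I expect this step to be the main one, although it is standard. Alternatively, I could simply cite the classical result, e.g. \cite{KAD2}.

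Finally, $\sharp=\alpha\circ\sharp_0$ because $\sharp_0^2={\rm id}$. Hence
\[
T^\sharp=W J_0TJ_0W^*=(WJ_0)T(WJ_0)^{-1}.
\]
Set $U=WJ_0$. This is conjugate-linear and surjective, and $\langle Ux,Uy\rangle=\langle J_0x,J_0y\rangle=\langle y,x\rangle$. So $U$ is the required conjugate-linear unitary.
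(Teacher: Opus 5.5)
Your proof is correct, and it takes a genuinely different route from the paper's. The paper works with $\sharp$ directly. It shows that $\sharp$ permutes rank-one projections while preserving ${\rm Tr}(P_xP_y)=|\langle x,y\rangle|^2$, and then invokes Wigner's theorem to obtain a linear or conjugate-linear unitary $U$ with $P_x^\sharp=UP_xU^{-1}$. It then argues that $U$ must be conjugate-linear. Finally it shows that the linear $*$-automorphism $\Phi(T)=U^{-1}T^\sharp U$ fixes every rank-one projection and hence is the identity.

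You instead build the antilinearity in from the start through the reference conjugation $J_0$. You then reduce, exactly as in Theorem~\ref{thm:all_star_conjugations}, to the linear $*$-automorphism $\alpha=\sharp\circ\sharp_0$, and prove that $\alpha$ is inner via $W(Tx_0)=\alpha(T)y_0$. Each step you list is valid:
\begin{itemize}
\item minimality is preserved, via $P\mathbb{B}(\mathscr{H})P=\mathbb{C}P$ for $P\neq 0$;
\item well-definedness and isometry follow from $P_{x_0}T^*TP_{x_0}=\|Tx_0\|^2P_{x_0}$;
\item surjectivity holds;
\item the intertwining $WT=\alpha(T)W$ holds on the vectors $Sx_0$.
\end{itemize}

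Your route buys elementarity. It replaces Wigner's theorem with a short self-contained argument, and it never has to decide whether the implementing unitary is linear or conjugate-linear, since $U=WJ_0$ is conjugate-linear by construction. That dichotomy is in fact the delicate point of the paper's route. If $U$ were linear, the polarization computation would give $(x\overline{\otimes} y)^\sharp=U(y\overline{\otimes} x)U^{-1}$, not $U(x\overline{\otimes} y)U^{-1}$ as written there. This is perfectly compatible with conjugate-linearity, so the linear case has to be excluded by multiplicativity instead: it would force all rank-one operators to commute, which is impossible when $\dim\mathscr{H}\ge 2$. In exchange, the paper's route is basis-free and identifies $\sharp$ directly through its action on the rays of $\mathscr{H}$.
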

	
	\begin{proof}
		Clearly, $(P_x^\sharp)^2=(P_x^2)^\sharp=P_x^\sharp$ and $(P_x^\sharp)^*=(P_x^*)^\sharp=P_x^\sharp$. Hence, $P_x^\sharp$ is a projection. We claim that $P_x^\sharp$ is minimal. Suppose that $Q$ is a nonzero projection satisfying $Q\leq P_x^\sharp$. Then $QP_x^\sharp=Q$. Applying $\sharp$ and using multiplicativity gives $(P_x^\sharp)^\sharp Q^\sharp=Q^\sharp$. Since $\sharp$ is involutive, $P_x Q^\sharp=Q^\sharp$. Hence, $Q^\sharp\leq P_x$. Since $P_x$ is a minimal projection in $\mathbb{B}(\mathscr{H})$, we have $Q^\sharp=P_x$, whence $Q=P_x^\sharp$. Thus, $P_x^\sharp$ is minimal. Therefore, $P_x^\sharp$ is a rank-one projection.
		
		Consequently, for every unit vector $x\in\mathscr{H}$, there exists a unit vector $z_x\in\mathscr{H}$ such that $P_x^\sharp=P_{z_x}$. Let $x,y\in\mathscr{H}$ be unit vectors. We have $\|(P_x P_y)^\sharp\| =\|P_x^\sharp P_y^\sharp\|$. Hence, $\|\langle y, x \rangle (x\overline{\otimes} y)^\sharp\|=\|\langle z_y,z_x \rangle (z_x\overline{\otimes} z_y)\|$, whence, 
		\begin{align}\label{fem1}
			|\langle x,y \rangle|^2=|\langle z_x,z_y \rangle|^2.
		\end{align}
		On the other hand, $P_xP_y=\langle y,x\rangle\, x\overline{\otimes} y$, and so 
		\begin{align}\label{fem2}
			{\rm Tr}(P_xP_y)=|\langle x,y\rangle|^2.
		\end{align}
		Similarly, $P_x^\sharp P_y^\sharp=P_{z_x}P_{z_y}=\langle z_y,z_x\rangle\, z_x\overline{\otimes} z_y$, so that
		\begin{align}\label{fem3}
			{\rm Tr}(P_x^\sharp P_y^\sharp)=|\langle z_x,z_y\rangle|^2.
		\end{align}
		
		Now, we put $\varphi(P_x):=P_x^\sharp$ for $\|x\|=1$. Since $\sharp$ is involutive, $\varphi$ is a bijection of the rank-one projections. Equalities \eqref{fem1}, \eqref{fem2}, and \eqref{fem3} show that ${\rm Tr}(\varphi(P_x)\varphi(P_y))={\rm Tr}(P_xP_y)=|\langle x,y\rangle|^2$ for all unit vectors $x,y\in\mathscr{H}$. Hence, by Wigner's theorem (see, for example, \cite{SIM}), there exists either a linear unitary or a conjugate-linear unitary operator $U:\mathscr{H}\to \mathscr{H}$ such that $P_x^\sharp=UP_xU^{-1}$ for all $\|x\|=1$.
		
		We show that $U$ must be conjugate-linear. Suppose, on the contrary, that $U$ is linear. For arbitrary $x,y\in\mathscr{H}$, the polarization identity gives
		\begin{align}\label{fem5}
			x\overline{\otimes} y=\frac{1}{4}\left(P_{x+y}-P_{x-y}+iP_{x+iy}-iP_{x-iy}\right).
		\end{align}
		Since $\sharp$ is conjugate-linear and $P_z^\sharp=UP_zU^{-1}$ for every nonzero $z$ (after normalizing $z$), we obtain
		\[
		\begin{aligned}
			(x\overline{\otimes} y)^\sharp
			&=\frac{1}{4}\left(P_{x+y}^\sharp-P_{x-y}^\sharp-iP_{x+iy}^\sharp+iP_{x-iy}^\sharp\right)\\
			&=\frac{1}{4}\left(UP_{x+y}U^{-1}-UP_{x-y}U^{-1}-iUP_{x+iy}U^{-1}+iUP_{x-iy}U^{-1}\right)\\
			&=U(x\overline{\otimes} y)U^{-1}.
		\end{aligned}
		\]
		Thus, $A^\sharp=UAU^{-1}$ for every rank-one operator $A$.
		
		Now let $A\neq 0$ be a rank-one operator and let $\lambda\in\mathbb{C}$. Since $\sharp$ is conjugate-linear, $(\lambda A)^\sharp=\overline{\lambda}A^\sharp$. Since $U$ is assumed to be linear, the preceding identity for rank-one operators also gives $(\lambda A)^\sharp=U(\lambda A)U^{-1}=\lambda UAU^{-1}=\lambda A^\sharp$. Hence, $\lambda A^\sharp=\overline{\lambda}A^\sharp$ for all $\lambda\in\mathbb{C}$. Taking $\lambda=i$ yields $iA^\sharp=-iA^\sharp$, which is impossible because $A^\sharp\neq 0$. Therefore, $U$ cannot be linear, and so $U$ is conjugate-linear.
		
		Define $\Phi:\mathbb{B}(\mathscr{H})\to \mathbb{B}(\mathscr{H})$ by $\Phi(T)=U^{-1}T^\sharp U$. Because both $\sharp$ and the map $T\longmapsto U^{-1}TU$ are conjugate-linear, $\Phi$ is complex-linear. Explicitly,
		\[
		\Phi(\lambda T)=U^{-1}(\lambda T)^\sharp U=U^{-1}(\overline{\lambda}T^\sharp)U=\lambda U^{-1}T^\sharp U=\lambda\Phi(T).
		\]
		Moreover, $\Phi$ is multiplicative, unital, $*$-preserving, and bijective. Thus, $\Phi$ is a linear $*$-automorphism of $\mathbb{B}(\mathscr{H})$.
		
		From $P_x^\sharp=UP_xU^{-1}$ we obtain $\Phi(P_x)=P_x$ for all $\|x\|=1$. By linearity and using \eqref{fem5}, $\Phi$ therefore fixes every rank-one operator. Finally, let $T\in\mathbb{B}(\mathscr{H})$ and $0\neq y\in\mathscr{H}$. Put $e=y/\|y\|$. Then $TP_e=(Te)\overline{\otimes} e$. Therefore, $\Phi(T)P_e=\Phi(T)\Phi(P_e)=\Phi(TP_e)=TP_e$. Applying both sides to $e$ gives $\Phi(T)e=Te$, whence $\Phi(T)y=Ty$. Hence, $\Phi(T)=T$. Thus, $U^{-1}T^\sharp U=T$, and consequently $T^\sharp=UTU^{-1}$ for all $T\in\mathbb{B}(\mathscr{H})$. 
	\end{proof}
	
	\section{Conjugations in the setting of Hilbert $C^*$-Modules}\label{sec3}
	
	Throughout this section, we assume that $\mathscr{A}$ is a $C^*$-algebra and $\mathscr{E}$ is a (right) Hilbert $\mathscr{A}$-module. 
	
	A map $T: \mathscr{E} \to \mathscr{E}$ is adjointable if there exists a map $T^*: \mathscr{E} \to \mathscr{E}$ such that $\langle Tx,y\rangle=\langle x,T^*y\rangle$ for all $x,y\in \mathscr{E}$. Then, both $T$ and $T^*$ are bounded $\mathscr{A}$-linear operators \cite[p. 8]{LAN}. We denote by $\mathbb{L}(\mathscr{E})$ the $C^*$-algebra of all adjointable linear operators. 
	
	\begin{definition}
		Let $\mathscr{A}$ be a $C^*$-algebra equipped with a $*$-conjugate-automorphism $\sharp$ and let $\mathscr{E}$ be a (right) Hilbert $\mathscr{A}$-module. A bounded conjugate-linear operator $S: \mathscr{E} \to \mathscr{E}$ is called \emph{$\sharp$-adjointable} if there exists a bounded conjugate-linear operator $S^*: \mathscr{E} \to \mathscr{E}$ such that
		$$\langle Sx,y\rangle=\langle x,S^*y\rangle^\sharp \qquad (x,y \in \mathscr{E}).$$
		Such an operator $S^*$ is called the $\sharp$-adjoint operator of $S$. We denote by $\mathbb{L}_c(\mathscr{E})$ the set of all bounded conjugate-linear operators $S$ on $\mathscr{E}$ that are $\sharp$-adjointable. 
	\end{definition}
	
	For every operator $S\in \mathbb{L}_c(\mathscr{E})$ we have
	\begin{align*}
		\langle S(xa),y\rangle&=\langle xa,S^*y\rangle^\sharp=\left(a^*\langle x,S^*y\rangle \right)^\sharp=(a^*)^\sharp\langle x,S^*y\rangle^\sharp=(a^\sharp)^*\langle Sx,y\rangle=\langle S(x)a^\sharp,y\rangle,
	\end{align*}
	whence
	$$S(xa)=S(x)a^\sharp, \qquad (a\in \mathscr{A}, x\in \mathscr{E}).$$
	We note that some mathematicians denote $S^*$ defined as above by $S^\sharp$. However, we use the notation $T^*$ due to it is more consistent to usual notation for adjoint of a linear operator.
	
	The usual scalar product does not work in this setting. So we define the scalar product $\lambda \cdot T$ for each $T\in \mathbb{L}_c(\mathscr{E})$ and $\lambda\in \mathbb{C}$ as follows
	\[
	\lambda \cdot T (x):=\bar{\lambda}T(x), \quad(x\in \mathscr{E}).\]
	Then 
	\[
	\langle \lambda \cdot T(x), y\rangle =\langle \bar{\lambda}Tx,y\rangle ={\lambda} \langle Tx,y\rangle={\lambda} \langle x,T^*y\rangle^\sharp=\langle x, \bar{\lambda}T^*x\rangle^\sharp=\langle x, \lambda \cdot T^*y\rangle^\sharp
	\]
	Hence, $(\lambda \cdot T)^*=\lambda \cdot T^*$.
	
	We also define the action of right $\mathbb{L}(\mathscr{E})$-module on $\mathbb{L}_c(\mathscr{E})$ by $TS$ where $T\in \mathbb{L}_c(\mathscr{E})$ and $S\in \mathbb{L}(\mathscr{E})$, which is compatible with the defined scalar product.
	
	It is straightforward to show that $\mathbb{L}_c(\mathscr{E})$ has a
	natural structure as an inner product $\mathbb{L}(\mathscr{E})$-module via the $C^*$-inner product $\langle S,T\rangle_c=S^*T$. Note that for each $x\in \mathscr{E}$, we have 
	\[
	\left((\lambda \cdot S)^*T\right)x=(\lambda \cdot S^*)Tx=\bar{\lambda}S^*(Tx)=\left(\bar{\lambda}S^*T\right)x
	\]
	So $\langle \lambda \cdot S,T\rangle_c=\bar{\lambda} \langle S,T\rangle_c$. In addition, $T^{**}=T$. Similarly, $\langle S,\lambda \cdot T\rangle_c =\lambda \langle S,T\rangle_c$.
	We can use the usual techniques in $\mathbb{B}(\mathscr{H})$ to show that for the induced norm $\|T\|_c:=\|\langle T,T\rangle_c\|^{1/2}$, 
	$\|T\|_c^2=\|T^*T\|$ and $\|T\|_c=\|T\|$, where $\|\cdot\|$ denotes the usual operator norm on the complete space of bounded conjugate linear operators on $\mathscr{E}$. 
	
	Thus, $\mathbb{L}_c(\mathscr{E})$ is a
	Hilbert $\mathbb{L}(\mathscr{E})$-module. To simplify the notation, from now on, we drop the index $_c$ and denote $\langle S,T\rangle_c$ by $\langle S,T\rangle$ and $\|T\|_c$ by $\|T\|$ without any ambiguity.
	
	Moreover, if $T, S\in \mathbb{L}_c(\mathscr{E})$, then
	$$\langle TSx,y\rangle=\langle Sx,T^*y\rangle^\sharp=\langle x,S^*T^*y\rangle^{\sharp\sharp}=\langle x,S^*T^*y\rangle,$$
	from which we conclude that $TS\in \mathbb{L}(\mathscr{E})$. Note that, unlike in the Hilbert space case, a linear operator and a conjugate-linear operator need not be adjointable and $\sharp$-adjointable, respectively.

	\begin{example}
		Let $\mathscr{A} = \mathbb{M}_2(\mathbb{C})$ be equipped with the $*$-conjugate-automorphism $\sharp: \mathscr{A} \to \mathscr{A}$ defined by entrywise complex conjugation $a^\sharp = \overline{a}$ for $a \in \mathscr{A}$. Let $\mathscr{E} = \ell^2(\mathscr{A})$ be the standard right Hilbert $C^*$-module over $\mathscr{A}$ with inner product $\langle x, y \rangle = \sum_{n=1}^{\infty} x_n^* y_n$, where $x = (x_n)_{n=1}^{\infty}$, $y = (y_n)_{n=1}^{\infty} \in \ell^2(\mathscr{A})$. Define $T: \mathscr{E} \to\mathscr{E} $ by $T(x_n) = \frac{1}{n}\,\overline{x_n}$ for $n \in \mathbb{N}$, for $x = (x_n)_{n=1}^{\infty} \in \ell^2(\mathscr{A})$. It is straightforward to verify that $T$ is $\sharp$-adjointable with $\sharp$-adjoint operator $T^*: \mathscr{E} \to \mathscr{E}$ by $T^*(y_n) = \frac{1}{n} \overline{y_n}$ for $n \in \mathbb{N}$, for $y = (y_n)_{n=1}^{\infty} \in \ell^2(\mathscr{A})$.
	\end{example}
	
	\begin{example} \label{phi} Let $x, y\in \mathscr{E}$ and define $\phi_{x,y}:\mathscr{E}\to\mathscr{E}$ by 
		\[
		\phi_{x,y}(z)=x\langle y,z\rangle^\sharp.
		\]
		We observe that $\phi_{x,y}\in\mathbb{L}_c(\mathscr{E})$, since 
		\begin{align*}
			\langle \phi_{x,y}z,w\rangle &=\langle x\langle y,z\rangle^\sharp,w\rangle=\langle z,y\rangle^\sharp\langle x,w\rangle\\
			&=\left(\langle z,y\rangle\langle x,w\rangle^\sharp\right)^\sharp=\langle z,y\langle x,w\rangle^\sharp\rangle^\sharp\qquad (z,w\in \mathscr{E}).
		\end{align*}
		This means that $\phi_{x,y}^*(w)=y\langle x,w\rangle^\sharp=\phi_{y,x}(w)$ for all $w\in \mathscr{E}$. Hence, $\phi_{x,y}^*=\phi_{y,x}$, a relation similar to corresponding rank-one operators on Hilbert spaces.
	\end{example}

	Now, we define the key notion of conjugation in the setting of Hilbert $C^*$-modules.
	
	\begin{definition}\label{ours}
		Let $\mathscr{E}$ be a Hilbert $C^*$-module over a $C^*$-algebra $\mathscr{A}$ endowed with a $*$-conjugate-automorphism. A \emph{conjugation} on $\mathscr{E}$ with respect to $\sharp$, or $\sharp$-conjugation, is a bounded involutive conjugate-linear operator $C:\mathscr{E}\to\mathscr{E}$ satisfying
		\[
		\langle Cx,Cy\rangle=\langle x,y\rangle^\sharp \qquad (x,y\in\mathscr{E}),
		\] 
		which is equivalent to $\langle Cx,y\rangle=\langle x,Cy\rangle^\sharp,\,\,x,y\in\mathscr{E}$. Also, equivalently, $C^*=C=C^{-1}$ in the sense of $\sharp$-adjointable conjugate-linear operators. Since
		$\|Cx\|^2=\|\langle Cx,Cx\rangle\|=\|\langle x,x\rangle^\sharp\|=\|\langle x,x\rangle\|=\|x\|^2$,
		the map $C$ is isometric. 
	\end{definition}
	
	\begin{proposition}
		Let $\mathscr{A}$ be a $C^*$-algebra equipped with a $*$-conjugate-automorphism
		$\sharp:\mathscr{A}\to\mathscr{A}$.
		Then $\mathscr{A}$, regarded as the standard right Hilbert $C^*$-module over itself,
		admits a $\sharp$-conjugation.
	\end{proposition}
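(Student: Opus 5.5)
The natural candidate is the map $\sharp$ itself: define $C:\mathscr{A}\to\mathscr{A}$ by $C(a)=a^\sharp$. We then verify the axioms of Definition~\ref{ours} directly, using only the properties (1)--(4) of Definition~\ref{Esk0} together with the isometry of $\sharp$ noted there.

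\textbf{Conjugate-linearity, involutivity, boundedness.}
\begin{itemize}
\item Conjugate-linearity of $C$ is property (1) of Definition~\ref{Esk0}.
\item Involutivity, $C^2=I$, is property (2).
\item Boundedness follows from the isometry of $\sharp$ established right after Definition~\ref{Esk0}, so $\|Ca\|=\|a\|$.
\end{itemize}

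\textbf{The defining identity.} Recall that the inner product on the standard module $\mathscr{A}$ is $\langle a,b\rangle=a^*b$. For $a,b\in\mathscr{A}$ we compute
\[
\langle Ca,Cb\rangle=(a^\sharp)^*b^\sharp=(a^*)^\sharp b^\sharp=(a^*b)^\sharp=\langle a,b\rangle^\sharp,
\]
using property (3) and then the multiplicativity (4). This is exactly the required relation $\langle Cx,Cy\rangle=\langle x,y\rangle^\sharp$.

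\textbf{Compatibility with the module action.} As a consistency check, we confirm the relation $S(xa)=S(x)a^\sharp$ that every $\sharp$-adjointable conjugate-linear operator must satisfy. Here $C(ba)=(ba)^\sharp=b^\sharp a^\sharp=C(b)a^\sharp$, so it holds. The equivalent form $\langle Ca,b\rangle=\langle a,Cb\rangle^\sharp$ follows from the defining identity by replacing $b$ with $Cb$ and using $C^2=I$.

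There is no real obstacle here. The only point requiring care is correctly interchanging $*$ and $\sharp$ via property (3), and noting that $\sharp$ is multiplicative rather than anti-multiplicative, which is exactly what makes $(a^*b)^\sharp=(a^*)^\sharp b^\sharp$ hold in the correct order.
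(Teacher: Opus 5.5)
Your proposal is correct and takes the same approach as the paper, which simply defines $C(a):=a^\sharp$. Your verification of $\langle Ca,Cb\rangle=(a^*)^\sharp b^\sharp=(a^*b)^\sharp$, together with conjugate-linearity, involutivity and boundedness, supplies the routine checks the paper leaves implicit.
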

	
	\begin{proof}
		It is enough to define $C:\mathscr{A}\to\mathscr{A}$ by $C(a):=a^\sharp$. 
	\end{proof}
	
	For a given $*$-conjugate-automorphism $\sharp$ on a $C^*$-algebra $\mathscr{A}$, the following example demonstrates that there exists a Hilbert $\mathscr{A}$-module which does not admit any $\sharp$-conjugation.
	
	\begin{example}\label{Esk1}
		Let $\mathscr{A}=\mathbb{C}\oplus\mathbb{C}$ with the natural $*$-conjugate-automorphism 
		\[
		(a,b)^\sharp=(\overline b,\overline a), \qquad (a,b)\in\mathscr{A},
		\] 
		which is $*$-preserving and isometric. Consider the right Hilbert $\mathscr{A}$-module $\mathscr{E}=\mathbb{C}\oplus\mathbb{C}^2$ with module action 
		\[
		(x,u)\cdot(a,b)=(xa,ub), \qquad x\in\mathbb{C},\ u\in\mathbb{C}^2,\ (a,b)\in\mathscr{A},
		\] 
		and inner product 
		\[
		\langle (x,u),(y,v)\rangle = \bigl(\overline{x}y,\langle u,v\rangle_{\mathbb{C}^2}\bigr),
		\] 
		which is conjugate-linear in the first variable and linear in the second. This is a full Hilbert $\mathscr{A}$-module.
		
		We claim that $\mathscr{E}$ admits no $\sharp$-conjugation. Suppose, to the contrary, that there exists $C:\mathscr{E}\to\mathscr{E}$ such that $C^2=I_{\mathscr{E}}$ and $\langle C\xi,C\eta\rangle = \langle \xi,\eta\rangle^\sharp,\,\, \xi,\eta\in\mathscr{E}$.
		
		Put $$\mathscr{E}_1=\mathbb{C}\oplus\{0\} \quad \mbox{and} \quad \mathscr{E}_2=\{0\}\oplus\mathbb{C}^2.$$ For $\xi=(x,0)\in\mathscr{E}_1$, we have $\langle \xi,\xi\rangle=(|x|^2,0)$. Hence, $\langle C\xi,C\xi\rangle = \langle \xi,\xi\rangle^\sharp = (0,|x|^2)$.
		Writing $C\xi=(y,v)\in\mathbb{C}\oplus\mathbb{C}^2$, we get $\langle C\xi,C\xi\rangle=(|y|^2,\|v\|^2)$. Hence, $|y|^2=0$, so $y=0$, meaning $C(\mathscr{E}_1)\subseteq \mathscr{E}_2$.
		
		Similarly, for $\eta=(0,u)\in\mathscr{E}_2$, we have $\langle \eta,\eta\rangle=(0,\|u\|^2)$, so $\langle C\eta,C\eta\rangle = (\|u\|^2,0)$. Writing $C\eta=(z,w)$, we get $(|z|^2,\|w\|^2)=(\|u\|^2,0)$, hence $w=0$, so $C(\mathscr{E}_2)\subseteq \mathscr{E}_1$. Since $C^2=I$, we have $\mathscr{E}_2=C(C(\mathscr{E}_2))\subseteq C(\mathscr{E}_1)$. Thus, $C(\mathscr{E}_1)=\mathscr{E}_2$. 
		
		Therefore, the restriction $C|_{\mathscr{E}_1}:\mathscr{E}_1\to\mathscr{E}_2$ is a bijective conjugate-linear dimension-preserving map. But $\dim_\mathbb{C}\mathscr{E}_1=1$ whereas $\dim_\mathbb{C}\mathscr{E}_2=2$, impossible. Hence, no such $C$ exists.
	\end{example}
	
	\begin{example} \label{mosop}
		Let $\mathscr{A}$ be a unital $C^*$-algebra which is not $*$-isomorphic to its opposite algebra $\mathscr{A}^{\mathrm{op}}$. Such $C^*$-algebras are known to exist, see \cite{PV}. Consider the standard Hilbert $\mathscr{A}$-module $\mathscr{E}=\mathscr{A}$. Suppose that $\sharp$ were a $*$-conjugate-automorphism of $\mathscr{A}$. Then the map
		
		\[ \Phi:\mathscr{A}\to\mathscr{A}^{\mathrm{op}}, \qquad \Phi(a)=(a^\sharp)^* \] 
		entails that $\mathscr{A}\cong\mathscr{A}^{\mathrm{op}}$, a contradiction. A priori, there is no conjugation associated with any $*$-conjugate-automorphism.
	\end{example}
	
	The next result is the main result of this section. It characterizes the conjugations on a Hilbert $C^*$-module associated to a $\sharp$ by the so-called real forms. We use the structures introduced in Theorem \ref{thm:characterization_star_conjugation}. Complexification of real Hilbert $C^*$-modules is discussed in \cite[Proposition 3.1]{AK}.
	
	\begin{theorem} 
		\label{thm:real-form-characterization}
		Let $\mathscr{E}$ be a right Hilbert $\mathscr{A}$-module, and let
		$\sharp$ be a $*$-conjugate-automorphism of $\mathscr{A}$.
		Put
		\[
		\mathscr{A}_\mathbb{R}
		=
		\{a\in\mathscr{A}:a^\sharp=a\}.
		\]
		The following assertions are equivalent.
		
		\begin{enumerate}
			\item
			$\mathscr{E}$ admits a $\sharp$-conjugation $C$.
			
			\item
			There exists a real Hilbert $\mathscr{A}_\mathbb{R}$-module
			$\mathscr{F}$ such that the completed complexification 
			$
			\mathscr{F}\otimes_\mathbb{R}\mathbb{C}
			$
			of $\mathscr{F}$ is isometrically isomorphic to $\mathscr{E}$ as a complex Hilbert
			$\mathscr{A}$-module, where
			\[
			\mathscr{A}
			\cong
			\mathscr{A}_\mathbb{R}\otimes_\mathbb{R}\mathbb{C}.
			\]
			
			\item
			There exists a closed real linear subspace $\mathscr F \subseteq \mathscr E$ such that $\mathscr F$ is a right $\mathscr A_{\mathbb R}$-module, $\langle x, y \rangle_{\mathscr E} \in \mathscr A_{\mathbb R}$ for all $x, y \in \mathscr F$, $\mathscr F$ is complete in the inherited norm, and $\mathscr E = \mathscr F \oplus i\mathscr F$ as real vector spaces.
			
			Moreover, if $C$ is fixed, then the associated real form is uniquely determined and equals
			$$
			\mathscr E_{\mathbb R}
			=
			\operatorname{Fix}(C)
			=
			\{x\in\mathscr E:Cx=x\}.
			$$
		\end{enumerate}
	\end{theorem}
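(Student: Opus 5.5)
I will prove (1) $\Rightarrow$ (3) $\Rightarrow$ (2) $\Rightarrow$ (1), and then the uniqueness claim, following the pattern of Theorem~\ref{thm:characterization_star_conjugation}.

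\emph{(1) $\Rightarrow$ (3).} Given a $\sharp$-conjugation $C$, put $\mathscr F=\operatorname{Fix}(C)$. Since $C$ is conjugate-linear and bounded, $\mathscr F$ is a closed real subspace. By the identity $C(xa)=C(x)a^\sharp$ (established for every $S\in\mathbb{L}_c(\mathscr E)$, and $C=C^*$ is such an operator), for $x\in\mathscr F$ and $a\in\mathscr A_\mathbb R$ we get $C(xa)=xa$, so $\mathscr F$ is a right $\mathscr A_\mathbb R$-module. For $x,y\in\mathscr F$, $\langle x,y\rangle=\langle Cx,Cy\rangle=\langle x,y\rangle^\sharp$, so the inner product takes values in $\mathscr A_\mathbb R$. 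For the decomposition, write $x=u+iv$ with $u=(x+Cx)/2$ and $v=(x-Cx)/(2i)$; involutivity and conjugate-linearity give $Cu=u$ and $Cv=v$. Uniqueness follows as before: if $u+iv=0$, applying $C$ gives $u-iv=0$. Completeness is inherited from closedness.

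\emph{(3) $\Rightarrow$ (2).} Take $\mathscr F$ itself, with the restricted inner product, as the real Hilbert $\mathscr A_\mathbb R$-module. Define $\Psi(x\otimes(\lambda+i\mu))=\lambda x+\mu\, ix$ from $\mathscr F\otimes_\mathbb R\mathbb C$ to $\mathscr E$. The complexified inner product on $\mathscr F\otimes_\mathbb R\mathbb C\cong\mathscr F\oplus i\mathscr F$ is
\[
\langle u+iv,u'+iv'\rangle=\langle u,u'\rangle+\langle v,v'\rangle+i(\langle u,v'\rangle-\langle v,u'\rangle),
\]
with values in $\mathscr A_\mathbb R\oplus i\mathscr A_\mathbb R=\mathscr A$. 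This agrees with the $\mathscr E$-inner product by sesquilinearity, so $\Psi$ is inner-product preserving, hence isometric. It is surjective because $\mathscr E=\mathscr F\oplus i\mathscr F$. The module actions match because $(u+iv)(a+ib)=(ua-vb)+i(ub+va)$ in both pictures.

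\emph{(2) $\Rightarrow$ (1).} On $\mathscr F\otimes_\mathbb R\mathbb C$, define $C(u+iv)=u-iv$. This map is conjugate-linear and involutive. Moreover,
\[
\langle C(u+iv),C(u'+iv')\rangle=\langle u,u'\rangle+\langle v,v'\rangle-i(\langle u,v'\rangle-\langle v,u'\rangle),
\]
which equals $\langle u+iv,u'+iv'\rangle^\sharp$ because $\sharp$ fixes $\mathscr A_\mathbb R$ and conjugates $i$. Transport $C$ to $\mathscr E$ through the given isomorphism, which is $\mathscr A$-linear and inner-product preserving; the transported map is again a $\sharp$-conjugation.

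\emph{Uniqueness.} If $C$ is fixed, the construction in (1) $\Rightarrow$ (3) recovers the real form as $\operatorname{Fix}(C)$. Conversely, the conjugation produced from a real form $\mathscr F$ in (2) $\Rightarrow$ (1) has fixed-point set exactly $\mathscr F$. So the correspondence between conjugations and real forms is bijective.

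\emph{Main obstacle.} The delicate step is (3) $\Rightarrow$ (2): one must check that $\mathscr F$ really is a real Hilbert $C^*$-module over the real $C^*$-algebra $\mathscr A_\mathbb R$. This requires positivity of $\langle x,x\rangle$ inside $\mathscr A_\mathbb R$, which holds since positivity in $\mathscr A$ restricts to positivity in $\mathscr A_\mathbb R$, the spectrum being computed in the complexification. One must also check that the canonical norm on the completed complexification matches the norm of $\mathscr E$, in the sense of \cite[Proposition 3.1]{AK}. Because $\Psi$ preserves $\mathscr A$-valued inner products, the norms coincide, and the completion adds nothing, since $\mathscr F\oplus i\mathscr F$ is already complete as a topological direct sum of closed subspaces.
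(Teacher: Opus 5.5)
Your proof is correct and essentially the paper's: you use the same ingredients, namely $\operatorname{Fix}(C)$ with the splitting $x=\frac{x+Cx}{2}+i\,\frac{x-Cx}{2i}$, the map $\Psi(x\otimes\lambda)=\lambda x$, and $u+iv\mapsto u-iv$ verified by the four-term inner-product computation, only running the cycle as (1)$\Rightarrow$(3)$\Rightarrow$(2)$\Rightarrow$(1) instead of (1)$\Rightarrow$(2)$\Rightarrow$(3)$\Rightarrow$(1). Your uniqueness argument (the conjugation built from $\mathscr F$ has fixed-point set exactly $\mathscr F$) and your check that $\langle x,x\rangle$ is positive in $\mathscr A_\mathbb{R}$ are more explicit than the paper, which treats both as evident.
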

	
	\begin{proof}
		(1) $\Longrightarrow$ (2).
		Assume that $C$ is a conjugation on $\mathscr{E}$. Define
		\[
		\mathscr{E}_\mathbb{R}
		:=
		\{x\in\mathscr{E}:Cx=x\}.
		\]
		
		If $x\in\mathscr{E}_\mathbb{R}$ and $a\in\mathscr{A}_\mathbb{R}$, then $C(xa)=C(x)a^\sharp=xa$, and so $xa\in\mathscr{E}_\mathbb{R}$. In addition, for $x,y\in\mathscr{E}_\mathbb{R}$, we have $\langle x,y\rangle^\sharp
		=\langle Cx,Cy\rangle=\langle x,y\rangle$, and so $\langle x,y\rangle\in\mathscr{A}_\mathbb{R}$. Now, it is evident that $\mathscr{E}_\mathbb{R}$ is a real Hilbert $\mathscr{A}_\mathbb{R}$-module.

		Since $C$ is isometric, its fixed-point space is closed. Now, for every $x\in\mathscr{E}$, define $x_1:=\frac{x+Cx}{2}\in\mathscr{E}_\mathbb{R}$ and $x_2:=\frac{x-Cx}{2i}\in\mathscr{E}_\mathbb{R}$. Then, $x=x_1+ix_2$. Hence, clearly, $\mathscr{E}=\mathscr{E}_\mathbb{R} \oplus i\mathscr{E}_\mathbb{R}$. Let us define $\Psi:
		\mathscr{E}_\mathbb{R}\otimes_\mathbb{R}\mathbb{C} \to \mathscr{E}$ by $\Psi(x\otimes\lambda)=\lambda x$.
		The preceding decomposition shows that $\Psi$ is a bijective complex-linear map. 
		
		It is a right $\mathscr{A}$-module map, since under the canonical identification $\mathscr{A}_\mathbb{R}\otimes_\mathbb{R}\mathbb{C} \cong\mathscr{A}$, we have
		\[
		\Psi((x\otimes\lambda)(a\otimes\mu))
		=
		\lambda\mu\,xa
		=
		\Psi(x\otimes\lambda)\,(\mu a).
		\]
		On elementary tensors,
		\[
		\begin{aligned}
			\left\langle
			x\otimes\lambda,
			y\otimes\mu
			\right\rangle
			&=
			\overline{\lambda}\mu
			\langle x,y\rangle_{\mathscr{E}_\mathbb{R}},
		\end{aligned}
		\]
		and therefore
		\[
		\begin{aligned}
			\left\langle
			\Psi(x\otimes\lambda),
			\Psi(y\otimes\mu)
			\right\rangle_{\mathscr{E}}
			&=
			\langle\lambda x,\mu y\rangle_{\mathscr{E}}\\
			&=
			\overline{\lambda}\mu
			\langle x,y\rangle_{\mathscr{E}_\mathbb{R}}.
		\end{aligned}
		\]
		Thus, $\Psi$ preserves inner products and hence is an isometric Hilbert
		$\mathscr{A}$-module isomorphism. Under this identification, the map $C(x\otimes\lambda)=x\otimes\overline{\lambda}$ corresponds exactly to the original conjugation $C$.
		
		(2) $\Longrightarrow$ (3). Let $\mathscr{E}\cong\mathscr{F}\otimes_\mathbb{R}\mathbb{C}$ as complex Hilbert $\mathscr{A}$-modules. Identify $\mathscr{E}$ with this
		complexification. Then $\mathscr{F}\cap i\mathscr{F}=\{0\}$ and every elementary tensor can be written as
		$x\otimes\lambda=({\rm Re}\lambda)x+i({\rm Im}\lambda)x$. Hence, $\mathscr{E}=\mathscr{F}\oplus i\mathscr{F}$ as real vector spaces. 
		
		(3) $\Longrightarrow$ (1). Let $\mathscr{F}$ be as in (3). Since $\mathscr{E}=\mathscr{F}\oplus i\mathscr{F}$, every $z\in\mathscr{E}$ has a unique representation $z=x+iy$, where $x,y\in\mathscr{F}$. We now define $Cz:=C(x+iy):=x-iy$. The directness of the sum shows that $C$ is well-defined. Clearly, $C(\lambda z)=\overline{\lambda}\,Cz$ and $C^2=I_{\mathscr{E}}$. 
		
		Now, suppose that $x,y,x',y'\in\mathscr{F}$. Since the inner product is conjugate-linear
		in the first variable, 
		\[
		\begin{aligned}
			\langle x+iy,x'+iy'\rangle
			={}&
			\langle x,x'\rangle
			+i\langle x,y'\rangle
			-i\langle y,x'\rangle
			+\langle y,y'\rangle.
		\end{aligned}
		\]
		Due to $\mathscr{F}$ is a real Hilbert $\mathscr{A}_\mathbb{R}$-module, all four inner products appearing here belong to
		$\mathscr{A}_\mathbb{R}$. Hence, $\sharp$ fixes each of them and conjugates the scalar $i$. Therefore,
		\[
		\begin{aligned}
			\langle x+iy,x'+iy'\rangle^\sharp
			={}&
			\langle x,x'\rangle
			-i\langle x,y'\rangle
			+i\langle y,x'\rangle
			+\langle y,y'\rangle.
		\end{aligned}
		\]
		On the other hand,
		\[
		\begin{aligned}
			\langle C(x+iy),C(x'+iy')\rangle
			&=
			\langle x-iy,x'-iy'\rangle\\
			&=
			\langle x,x'\rangle
			-i\langle x,y'\rangle
			+i\langle y,x'\rangle
			+\langle y,y'\rangle.
		\end{aligned}
		\]
		Consequently,
		\[
		\langle Cz,Cw\rangle
		=
		\langle z,w\rangle^\sharp
		\qquad
		(z,w\in\mathscr{E}).
		\]
		Thus, $C$ is a $\sharp$-conjugation.
		
		Finally, if $C$ is given, its fixed-point set is necessarily ${\rm Fix}(C)=\{x\in\mathscr{E}:Cx=x\}$, so the real form associated with a fixed conjugation is unique.
	\end{proof}
	
	The next result characterizes invariant submodules.
	
	\begin{corollary}
		\label{thm:invariant-submodule}
		Let $\mathscr{E}$ be a right Hilbert $\mathscr{A}$-module with a $\sharp$-conjugation
		$C$, and let $\mathscr{F}\subseteq\mathscr{E}$ be a closed
		$C$-invariant submodule, that is, $C(\mathscr{F})\subseteq\mathscr{F}$. Then $C(\mathscr{F})=\mathscr{F}$, and the restriction
		\[
		C_{\mathscr{F}}:=C|_{\mathscr{F}}
		\]
		is a $\sharp$-conjugation on $\mathscr{F}$. Furthermore, $\mathscr{F}_\mathbb{R}:=\mathscr{F}\cap\mathscr{E}_\mathbb{R}=\{x\in\mathscr{F}:Cx=x\}$ is a real Hilbert $\mathscr{A}_\mathbb{R}$-module and $\mathscr{F}
		=\mathscr{F}_\mathbb{R}\oplus i\mathscr{F}_\mathbb{R}$.
		
		Consequently, $\mathscr{F}\cong\mathscr{F}_\mathbb{R}\otimes_\mathbb{R}\mathbb{C}$ as complex Hilbert $\mathscr{A}$-modules.
	\end{corollary}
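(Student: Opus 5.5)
First I would show $C(\mathscr{F})=\mathscr{F}$ from involutivity alone. The hypothesis gives $C(\mathscr{F})\subseteq\mathscr{F}$, and applying $C$ once more yields $\mathscr{F}=C^2(\mathscr{F})=C(C(\mathscr{F}))\subseteq C(\mathscr{F})$.

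Next I would check that $C_{\mathscr{F}}$ is a $\sharp$-conjugation on $\mathscr{F}$, using that $\mathscr{F}$ is a closed submodule and hence a Hilbert $\mathscr{A}$-module with the restricted inner product. The map $C_{\mathscr{F}}:\mathscr{F}\to\mathscr{F}$ is well defined by invariance. It is bounded, conjugate-linear and involutive because $C$ is. The identity $\langle C_{\mathscr{F}}x,C_{\mathscr{F}}y\rangle=\langle x,y\rangle^\sharp$ holds for $x,y\in\mathscr{F}$ because it holds on all of $\mathscr{E}$ (Definition \ref{ours}).

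For the real form, the direct route is to apply Theorem \ref{thm:real-form-characterization} to the Hilbert $\mathscr{A}$-module $\mathscr{F}$ with the conjugation $C_{\mathscr{F}}$. Its fixed-point set is
\[
\operatorname{Fix}(C_{\mathscr{F}})=\{x\in\mathscr{F}:Cx=x\}=\mathscr{F}\cap\mathscr{E}_\mathbb{R}=\mathscr{F}_\mathbb{R}.
\]
The implication (1)$\Rightarrow$(2) of that theorem, and its proof, then show that $\mathscr{F}_\mathbb{R}$ is a closed real Hilbert $\mathscr{A}_\mathbb{R}$-module. The same proof gives $\mathscr{F}=\mathscr{F}_\mathbb{R}\oplus i\mathscr{F}_\mathbb{R}$ via $x=\frac{x+Cx}{2}+i\frac{x-Cx}{2i}$. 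Both summands lie in $\mathscr{F}$ because $\mathscr{F}$ is a complex subspace and $Cx\in\mathscr{F}$.

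Finally, the map $\Psi(x\otimes\lambda)=\lambda x$ gives the isometric isomorphism $\mathscr{F}\cong\mathscr{F}_\mathbb{R}\otimes_\mathbb{R}\mathbb{C}$ of complex Hilbert $\mathscr{A}$-modules, exactly as in that proof.

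No step is a real obstacle. The only point needing care is that invariance must be used together with the submodule structure, so that both parts $\frac{x\pm Cx}{2}$ stay inside $\mathscr{F}$ and $\mathscr{F}_\mathbb{R}$ is closed under the right action of $\mathscr{A}_\mathbb{R}$. The latter holds because $C(xa)=C(x)a^\sharp=xa$ for $a\in\mathscr{A}_\mathbb{R}$ and $xa\in\mathscr{F}$.
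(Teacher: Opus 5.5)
Your proposal is correct and follows the paper's proof. Involutivity gives $C(\mathscr{F})=\mathscr{F}$, the restriction $C_{\mathscr{F}}$ inherits the conjugation axioms, and the real-form assertions come from applying Theorem \ref{thm:real-form-characterization} to $\mathscr{F}$ with $C_{\mathscr{F}}$; you additionally spell out details the paper leaves implicit, namely that $\frac{x\pm Cx}{2}$ stay in $\mathscr{F}$ and that $\mathscr{F}_\mathbb{R}$ is closed under the action of $\mathscr{A}_\mathbb{R}$.
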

	
	\begin{proof}
		Since $C^2=I_{\mathscr{E}}$ and $C(\mathscr{F})\subseteq\mathscr{F}$, for every $x\in\mathscr{F}$, we have $x=C^2x\in C(\mathscr{F})$. Thus, $C(\mathscr{F})=\mathscr{F}$. Therefore, $C|_{\mathscr{F}}$ is a conjugation. The assertion concerning $\mathscr{F}_\mathbb{R}$ follows by applying Theorem~\ref{thm:real-form-characterization} to the Hilbert
		$\mathscr{A}$-module $\mathscr{F}$.
	\end{proof}
	
	\begin{example} Let $\mathscr{A}=\mathbb{C}$ with $\lambda^\sharp=\overline{\lambda}$, and let $\mathscr{E}=\mathbb{C}^2$. Define $C(z_1,z_2)=(\overline{z_2},\overline{z_1})$. Then $C$ is a conjugation. Indeed, $C^2=I_{\mathbb{C}^2}$, and with the usual inner product, $\langle Cx,Cy\rangle=\overline{\langle x,y\rangle}=\langle x,y\rangle^\sharp$. 
		
		Its fixed-point space is $\mathscr{E}_\mathbb{R}=\{(z,\overline z):z\in\mathbb{C}\}$. As a real Hilbert space this is isometrically isomorphic to $\mathbb{R}^2$.
	\end{example}
	
	\begin{remark}
		The preceding example illustrates that a Hilbert $C^*$-module may admit
		different conjugations. For example, on $\mathbb{C}^2$ the standard
		coordinatewise conjugation and the twisted conjugation above have
		different fixed-point spaces. Thus, the real form is uniquely determined
		by a chosen conjugation, but it is not an intrinsic object determined
		solely by the complex Hilbert space or Hilbert $C^*$-module.
		
		Furthermore, the characterization theorem does not assert that every Hilbert
		$\mathscr{A}$-module admits a $\sharp$-conjugation. Such a conjugation exists
		precisely when the module possesses a compatible real form over
		$\mathscr{A}_\mathbb{R}$. In particular, the existence problem is a
		genuine structural question about the module and a chosen
		$*$-conjugate-automorphism $\sharp$.
	\end{remark}
	
	\section{Conjugations on Hilbert $W^*$-modules}\label{sec4}
	
	We start this section with a concept compatible with the Hilbert space setting. 
	
	For each $T\in\mathbb{L}(\mathscr{E})$, let us define the map $T^C:=CT^*C$. Then, the map
	\begin{equation}\label{eqsharp}
		\cdot^C:\mathbb{L}(\mathscr{E})\to\mathbb{L}(\mathscr{E}),\qquad T\mapsto T^C,
	\end{equation}
	gives a linear $*$-anti-automorphism on the $C^*$-algebra $\mathbb{L}(\mathscr{E})$, since
	\[(\lambda T)^C = C(\lambda T)^*C = C(\overline{\lambda}T^*)C = \lambda CT^*C = \lambda T^C.\]
	It is noteworthy that this setting includes the unital $C^*$-algebra case as a special instance, since when $\mathscr{A}$ is unital, the $C^*$-algebra $\mathbb{L}(\mathscr{A})$ can be identified with $\mathscr{A}$ (see \cite[p.~10]{LAN}). Thus, the map $\cdot^C$ can be linked to the theory of linear $*$-anti-automorphisms on $C^*$-algebras.
	
	\begin{definition}\label{moxesk}
		\begin{itemize}\item[]
			\item [(i)]An element $T\in\mathbb{L}(\mathscr{E})$ is called \emph{$C$-normal} if $(T^*T)^C= TT^*$, or equivalently, 
			$CT^*TC = TT^*$. It follows from the uniqueness of the square root in $\mathbb{L}(\mathscr{E})$ holds that $C|T|^2 C = |T^*|^2$ if and only if $C|T|C = |T^*|$.
			
			\item[(ii)] An element $T\in\mathbb{L}(\mathscr{E})$ is called \emph{$C$-symmetric} if $T=T^C$.
			It is called \emph{symmetric} if it is $C$-symmetric with respect to some linear $*$-anti-automorphism $\cdot^C$. Clearly, a $C$-symmetric operator is $C$-normal.
		\end{itemize}
	\end{definition}
	
	For $x,y\in\mathscr{E}$, let $\theta_{x,y}(z)=x\langle y,z\rangle,\,\, z\in\mathscr{E}$. Then, $\theta_{x,y}\in\mathbb{L}(\mathscr{E}) $ with $\theta_{x,y}^*=\theta_{y,x}$. Such operators are analogous to rank-one operators on a Hilbert space but they are not really rank-one in the sense of Banach space operator. In the following example, we treat such operators in the setting of Hilbert $C^*$-modules equipped with a $\sharp$-conjugation.
	
	\begin{example}
		Let $C$ be a $\sharp$-conjugation on a Hilbert $\mathscr{A}$-module $\mathscr{E}$. 
		For $x,y\in\mathscr{E}$, we have
		\begin{align*}
			\langle C\theta_{Cx,Cy}Cz,w\rangle&=\langle \theta_{Cx,Cy}Cz, Cw \rangle^\sharp=\langle Cx\langle Cy,Cz\rangle, Cw\rangle^\sharp\\
			&=\langle Cz,Cy\rangle^\sharp\langle Cx, Cw\rangle^\sharp=\langle z,y\rangle \langle x,w\rangle=\langle x\langle y,z\rangle,w\rangle\\
			&=\langle \theta_{x,y}(z),w\rangle.
		\end{align*}
		Hence, $C\theta_{Cx,Cy}C=\theta_{x,y}$. Thus,
		\begin{align}\label{far1}
			\theta_{x,y}^C=C\theta_{y,x}C=\theta_{Cy,Cx}.
		\end{align}
		If $Cx=y$, then $Cy=x$, and so $\theta_{x,y}=C\theta_{y,x}C=C\theta_{x,y}^*C=\theta_{x,y}^C$. Therefore, $\theta_{x,y}$ is $C$-symmetric.
	\end{example}

	Next, we show that any conjugation preserves the closed-range property and establish several related closure results.
	
	\begin{proposition}\label{prop:closed-range-C-adjoint}
		Let $\mathscr{E}$ be a Hilbert $C^*$-module equipped with a $\sharp$-conjugation $C$. Let $T\in\mathbb{L}(\mathscr{E})$. Then:
		\begin{itemize}
			\item[(i)] $\overline{\ran(T^C)} = C\!\left(\overline{\ran(T^*)}\right)$;
			\item[(ii)] $\ran(T^C)$ is closed if and only if $\ran(T^*)$ is closed, and this occurs if and only if $\ran(T)$ is closed;
			\item[(iii)] $\ker(T^C) = C(\ker(T^*)) = C(\overline{\ran(T)}^\perp)$.
		\end{itemize}
		In particular, if $\ran(T)$ is closed, then $\ker(T^C) = C(\ker(T^*)) = C(\ran(T)^\perp)$.
	\end{proposition}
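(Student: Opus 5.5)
Everything follows from the fact that $C$ is a bijective, isometric, conjugate-linear map of $\mathscr{E}$ onto itself with $C^2=I_{\mathscr{E}}$, combined with $T^C=CT^*C$. Because $C$ is an isometric bijection, it is a homeomorphism of $\mathscr{E}$ (indeed of the underlying real Banach space). So it maps closed sets to closed sets and commutes with taking closures: $C(\overline{S})=\overline{C(S)}$ for every $S\subseteq\mathscr{E}$.

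For (i), we first compute $\ran(T^C)=CT^*C(\mathscr{E})=CT^*(\mathscr{E})=C(\ran(T^*))$, using $C(\mathscr{E})=\mathscr{E}$. Taking closures and applying the homeomorphism remark gives $\overline{\ran(T^C)}=C(\overline{\ran(T^*)})$.

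For (ii), the same identity shows that $\ran(T^C)$ is closed if and only if $C(\ran(T^*))$ is closed. Since $C$ is a homeomorphism with $C^{-1}=C$, this holds if and only if $\ran(T^*)$ is closed. The remaining equivalence, that $\ran(T^*)$ is closed iff $\ran(T)$ is closed for $T\in\mathbb{L}(\mathscr{E})$, is the standard closed-range theorem for adjointable operators on Hilbert $C^*$-modules (Lance, Theorem 3.2), which I would cite. This citation is the only non-formal input, so it is the step to handle with care. A pure Banach-space argument is not available here because $\mathbb{L}(\mathscr{E})$ does not come with the usual duality, so I would rely on Lance's result rather than reprove it.

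For (iii), $x\in\ker(T^C)$ iff $CT^*Cx=0$ iff $T^*Cx=0$, because $C$ is injective and $C0=0$. This says $Cx\in\ker(T^*)$, i.e.\ $x\in C(\ker T^*)$, using $C^2=I$. It remains to show $\ker(T^*)=\overline{\ran(T)}^\perp$, which holds in any Hilbert module. If $T^*y=0$, then $\langle Tz,y\rangle=\langle z,T^*y\rangle=0$ for all $z$, and by continuity $y\perp\overline{\ran(T)}$. Conversely, if $y\perp\ran(T)$, then $\langle z,T^*y\rangle=0$ for all $z$; choosing $z=T^*y$ gives $\langle T^*y,T^*y\rangle=0$, hence $T^*y=0$. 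The final assertion is then immediate: if $\ran(T)$ is closed, then $\overline{\ran(T)}=\ran(T)$ and (iii) reads $\ker(T^C)=C(\ker T^*)=C(\ran(T)^\perp)$.
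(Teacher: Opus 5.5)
Your proposal is correct and follows essentially the same argument as the paper. Parts (i) and (ii) rest on $C$ being a homeomorphism together with $\ran(T^C)=C(\ran(T^*))$, plus Lance's Theorem 3.2 for closedness of $\ran(T)$ versus $\ran(T^*)$, and part (iii) uses the chain $T^Cx=0 \iff T^*Cx=0 \iff x\in C(\ker T^*)$; the only difference is that you verify $\ker(T^*)=\overline{\ran(T)}^\perp$ directly rather than citing it, which is fine.
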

	
	\begin{proof}
		(i) Since $C$ is a surjective isometry and hence a homeomorphism, for every subset $S \subseteq \mathscr{E}$ we have $C(\overline{S}) = \overline{C(S)}$. Consequently, $S$ is closed if and only if $C(S)$ is closed. Therefore,
		$$
		C\!\left(\overline{\ran(T^*)}\right) = \overline{C\!\left(\ran(T^*)\right)} = \overline{\left(\ran(CT^*)\right)}= \overline{\ran(CT^*C)} = \overline{\ran(T^C)}.
		$$
		
		(ii) Since $T^C = CT^*C$, we have $\ran(T^C) = C\!\left(\ran(T^*)\right)$. Thus, $\ran(T^C)$ is closed if and only if $C\!\left(\ran(T^*)\right)$ is closed. Since $C$ is a homeomorphism, the latter is closed if and only if $\ran(T^*)$ is closed. Moreover, by \cite[Theorem 3.2]{LAN}, $\ran(T)$ is closed if and only if $\ran(T^*)$ is closed.
		
		(iii) For the first equality, for any $x \in \mathscr{E}$,
		$$
		T^C x = 0 \iff C T^* C x = 0 \iff T^* C x = 0 \iff C x \in \ker(T^*) \iff x \in C(\ker(T^*)).
		$$
		Hence, $\ker(T^C) = C(\ker(T^*))$. The identity $\ker(T) = \overline{\ran(T^*)}^\perp$ is known; see the proof of \cite[Theorem 2.3.3]{Manu}. The rest is immediate.
	\end{proof}
	
	In establishing the following result we use the fact that a closed submodule of $\mathscr{E}$ is orthogonally complemeneted if and only if it is the range of a projection in $\mathbb{L}(\mathscr{E})$; see \cite[Corollary 2.3.4]{Manu}.
	
	\begin{lemma}\label{lem:C-projection}
		Let $\mathscr{E}$ be a Hilbert $C^*$-module equipped with a $\sharp$-conjugation $C$, and let $M$ be a submodule of $\mathscr{E}$. Then $C(M)$ is orthogonally complemented if and only if so is $M$. 
		
		In this case, $C P_M C = P_{C(M)}$ and so $C(M^\perp) = C(M)^\perp$.
	\end{lemma}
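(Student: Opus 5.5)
The plan is to use the characterization recalled just before the statement: a closed submodule is orthogonally complemented exactly when it is the range of a projection in $\mathbb{L}(\mathscr{E})$. I will not use it directly, though. Instead I will transport the projection by conjugating with $C$ and verify the required properties by hand.

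First I check that $C(M)$ is again a submodule. For $x\in M$ and $a\in\mathscr{A}$ we have $(Cx)a=C(x\,a^\sharp)$, because $C(xb)=C(x)b^\sharp$ and $\sharp$ is involutive. Together with additivity and $C(\lambda x)=\bar\lambda Cx$, this shows that $C(M)$ is an $\mathscr{A}$-submodule. It is closed whenever $M$ is, since $C$ is a surjective isometry.

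Next I prove the orthogonal complement identity $C(M)^\perp=C(M^\perp)$. This holds for any submodule, complemented or not. If $y\in M^\perp$ and $x\in M$, then $\langle Cx,Cy\rangle=\langle x,y\rangle^\sharp=0$, so $C(M^\perp)\subseteq C(M)^\perp$. Conversely, if $z\perp C(M)$, then $\langle x,Cz\rangle^\sharp=\langle Cx,C^2z\rangle=\langle Cx,z\rangle=0$ for all $x\in M$, so $Cz\in M^\perp$ and hence $z=C(Cz)\in C(M^\perp)$.

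With this in hand, suppose $M$ is orthogonally complemented, so $\mathscr{E}=M\oplus M^\perp$. Applying the bijection $C$ gives $\mathscr{E}=C(M)+C(M^\perp)=C(M)+C(M)^\perp$, and the sum is orthogonal. So $C(M)$ is orthogonally complemented. The converse follows by applying the same argument to $C(M)$, since $C(C(M))=M$.

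For the projection formula, set $Q=CP_MC$. As a composition of two $\sharp$-adjointable conjugate-linear maps with an adjointable linear map, $Q$ lies in $\mathbb{L}(\mathscr{E})$, with $Q^*=C^*P_M^*C^*=CP_MC=Q$. It is idempotent because $C^2=I$. Its range is $C(P_M(\mathscr{E}))=C(M)$. A self-adjoint idempotent in $\mathbb{L}(\mathscr{E})$ with range $C(M)$ is the projection onto $C(M)$, so $Q=P_{C(M)}$.

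The main point needing care is checking that $CP_MC$ is adjointable with adjoint $CP_M^*C$. Using $\langle Cx,y\rangle=\langle x,Cy\rangle^\sharp$, I compute
\[\langle CP_MCx,y\rangle=\langle P_MCx,Cy\rangle^\sharp=\langle Cx,P_MCy\rangle^\sharp=\langle x,CP_MCy\rangle.\]
The last equality uses $\langle Cx,w\rangle^\sharp=\langle x,Cw\rangle^{\sharp\sharp}=\langle x,Cw\rangle$. Finally, the identity $C(M^\perp)=C(M)^\perp$ also follows directly from $I-P_{C(M)}=C(I-P_M)C$.
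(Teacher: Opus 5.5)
Your proof is correct, but it runs in a different order from the paper's.

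The paper starts from the projection. It notes that $CP_MC$ is a self-adjoint idempotent in $\mathbb{L}(\mathscr{E})$ with range $C(M)$. It then uses the characterization of orthogonally complemented submodules as ranges of projections to conclude at once that $C(M)$ is complemented and $CP_MC=P_{C(M)}$. Only after that does it read off $C(M^\perp)=C(M)^\perp$ from the decompositions $\mathscr{E}=M\oplus M^\perp$ and $\mathscr{E}=C(M)\oplus C(M^\perp)$.

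You reverse this. You first prove $C(M)^\perp=C(M^\perp)$ directly from $\langle Cx,Cy\rangle=\langle x,y\rangle^\sharp$ and $C^2=I$, for an arbitrary submodule. Complementedness is then just the image of $\mathscr{E}=M\oplus M^\perp$ under the bijection $C$, and you check the projection formula separately.

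The paper's route is shorter and gets complementedness and $P_{C(M)}$ in one stroke. Yours avoids the projection characterization entirely and gives the orthogonal-complement identity with no complementation hypothesis. It also makes explicit three points the paper leaves implicit:
\begin{itemize}
\item that $C(M)$ is a submodule, via $(Cx)a=C(xa^\sharp)$;
\item that $CP_MC$ is adjointable with adjoint $CP_MC$;
\item that $C(M^\perp)\subseteq C(M)^\perp$.
\end{itemize}
The last inclusion is what the paper's final sentence tacitly needs in order to identify the algebraic complement $C(M^\perp)$ with the orthogonal complement $C(M)^\perp$.
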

	
	\begin{proof}
		Let $M$ be orthogonally complemented. Hence, $P_M\in\mathbb{L}(\mathscr{E})$. Since $C$ is a conjugation, the operator $C P_M C$ is self-adjoint and satisfies $(C P_M C)^2 = C P_M C$. Hence, $C P_M C$ is a projection in $ \mathbb{L}(\mathscr{E})$ whose range is therefore orthogonally complemented. Moreover, the bijectivity of $C$ implies that
		$$
		\ran(C P_M C) = C(\ran(P_M)) = C(M).
		$$
		Therefore, $C(M)$ is an orthogonally complemented submodule of $\mathscr{E}$, and consequently $C P_M C = P_{C(M)}$. Since, $C$ is involutive, we conclude that if $C(M)$ is orthogonally complemented, then so is $M=C(C(M))$. The last equality follows from the decompositions $\mathscr{E}=M\oplus M^\perp$ and $\mathscr{E}=C(\mathscr{E})=C(M) \oplus C(M^\perp)$.
	\end{proof}
	
	In \cite[Lemma 2.5]{KO2}, Ko, Lee, and Lee extended the Douglas majorization theorem to the setting of Hilbert spaces equipped with a conjugation, under certain mild hypotheses. The next result uses a Hilbert $C^*$-module version of Douglas theorem \cite[Theorem 2.4]{FMX} to give a characterization of $C$-hyponormality, which is new even in the ordinary Hilbert space setting; see also \cite{ESK, XU1}.
	
	\begin{theorem}\label{thm:norm-characterization} 
		Let $\mathscr{E}$ be a Hilbert $C^*$-module over a unital $C^*$-algebra $\mathscr{A}$, let $C$ be a conjugation on $\mathscr{E}$, and let $T, S\in\mathbb{L}(\mathscr{E})$ and $\overline{\ran(T)}$ is orthogonally complemented. Then the following assertions are equivalent.
		\begin{enumerate}
			\item[(i)] $T^*T \geq CS^*S C$.
			\item[(ii)] $\|Tx\| \geq \|SC x\|$ for every $x\in\mathscr{E}$.
		\end{enumerate}
	\end{theorem}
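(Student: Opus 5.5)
The natural route is to reduce the statement to the Hilbert $C^*$-module version of Douglas' theorem \cite[Theorem 2.4]{FMX}, applied to the pair of adjointable operators $T$ and $R:=SC\,C$-twisted appropriately. The point is that $CS^*SC$ is itself an element of $\mathbb{L}(\mathscr{E})$ of the form $R^*R$, where $R := CSC \in \mathbb{L}(\mathscr{E})$. Indeed, $CSC$ is linear because it is a composition of two conjugate-linear maps and one linear map. It is adjointable with $(CSC)^* = CS^*C$, which follows from the relation $\langle Cx,y\rangle=\langle x,Cy\rangle^\sharp$ applied twice. Hence
\[
R^*R = CS^*CCSC = CS^*SC ,
\]
using $C^2=I$. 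So (i) reads $T^*T \geq R^*R$.

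\emph{Step 1: (i) $\Rightarrow$ (ii).} From $T^*T\geq R^*R$ we get $\langle Tx,Tx\rangle \geq \langle Rx,Rx\rangle$ in $\mathscr{A}$, hence $\|Tx\|\geq\|Rx\|$. Since $C$ is isometric, $\|Rx\|=\|CSCx\|=\|SCx\|$. This direction needs no hypothesis.

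\emph{Step 2: (ii) $\Rightarrow$ (i).} Here is the main obstacle: in a Hilbert $C^*$-module, a pointwise norm inequality $\|Rx\|\le\|Tx\|$ does not by itself give the operator inequality $R^*R\le T^*T$. Only $\langle Rx,Rx\rangle \le \lambda\langle Tx,Tx\rangle$ follows from a Douglas-type factorization. This is why we need the hypothesis that $\overline{\ran(T)}$ is orthogonally complemented, together with the unitality of $\mathscr{A}$, which are exactly the assumptions of \cite[Theorem 2.4]{FMX}. I would invoke that theorem in the form: for $T,R\in\mathbb{L}(\mathscr{E})$ with $\overline{\ran(T^*)}$ (equivalently, after passing to adjoints, the relevant range closure) orthogonally complemented, the inequality $\|R x\|\le\|Tx\|$ for all $x$ is equivalent to $R=DT$ for some $D\in\mathbb{L}(\mathscr{E})$ with $\|D\|\le1$. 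That in turn is equivalent to $R^*R\le T^*T$.

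Concretely, I would construct $D$ on $\ran(T)$ by $D(Tx):=Rx$. This is well defined and contractive by (ii), since $\|Rx\|=\|SCx\|$. One checks $\mathscr{A}$-linearity: $D(Txa)=Rxa$. Extend $D$ by continuity to $\overline{\ran(T)}$, and then by $0$ on its orthogonal complement, which is where complementedness is used. The delicate point is adjointability of $D$. This is exactly what \cite[Theorem 2.4]{FMX} supplies, so I would quote it rather than redo it; I would also double-check whether the cited version requires complementedness of $\overline{\ran(T)}$ or of $\overline{\ran(T^*)}$ and adjust via $T\leftrightarrow T^*$ if needed. Once $R=DT$ with $\|D\|\le1$, we get $D^*D\le I$, and hence
\[
CS^*SC=R^*R=T^*D^*DT\le T^*T ,
\]
which is (i).
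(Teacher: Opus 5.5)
Your proposal is correct and is essentially the paper's argument. In both, you write $CS^*SC=R^*R$ with $R=CSC\in\mathbb{L}(\mathscr{E})$ and $\|Rx\|=\|SCx\|$, get (i)$\Rightarrow$(ii) directly from $\langle Rx,Rx\rangle\le\langle Tx,Tx\rangle$, and get (ii)$\Rightarrow$(i) by applying \cite[Theorem 2.4]{FMX} to the pair $T$, $CSC$; your contraction $D(Tx)=Rx$ just unpacks that citation.

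As a side remark, adjointability of $D$ is not actually needed, and your claim that $\|Rx\|\le\|Tx\|$ cannot by itself give $R^*R\le T^*T$ is too pessimistic. Paschke's inequality $\langle Dz,Dz\rangle\le\|D\|^2\langle z,z\rangle$ holds for every bounded module map $D$ defined on the Hilbert module $\overline{\ran(T)}$. Taking $z=Tx$ gives $\langle Rx,Rx\rangle\le\langle Tx,Tx\rangle$, hence $R^*R\le T^*T$, without using that $\overline{\ran(T)}$ is complemented.
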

	\begin{proof}
		
		(i) $\Longrightarrow$ (ii). Suppose that $T^*T \geq CS^*SC$. For any $x \in \mathscr{E}$, we have$$\|Tx\|^2 = \|\langle Tx, Tx \rangle\| = \|\langle T^*T x, x \rangle\| \geq \|\langle CS^*SC x, x \rangle\| = \|\langle SC x, SC x \rangle\| = \|SC x\|^2.$$Hence, $\|Tx\| \geq \|SC x\|$. (In this proof, we do not use the assumption that $\overline{\ran(T)}$ is orthogonally complemented.)
		
		(ii) $\Longrightarrow$ (i). Suppose that $\|Tx\| \geq \|SC x\|$ for all $x \in \mathscr{E}$. Since $C$ is a conjugation, for any $x \in \mathscr{E}$ we have $\|Tx\| \geq \|SCx\| = \|CSCx\|$. Utilizing \cite[Theorem 2.4]{FMX} gives $$T^*T \geq (CS^*C)(CSC) = CS^*SC.$$
	\end{proof}
	
	\begin{corollary}\label{thm:norm-characterization1}
		Let $\mathscr{E}$ be a Hilbert $C^*$-module over a unital $C^*$-algebra $\mathscr{A}$ admitting a conjugation $C$, and let $T\in\mathbb{L}(\mathscr{E})$ such that $\overline{\ran(T)}$ is orthogonally complemented. Then the following assertions are equivalent.
		\begin{enumerate}\item[(i)] $T$ is $C$-hyponormal, that is, $T^*T \geq C T T^* C$.\item[(ii)] $\|Tx\| \geq \|T^*C x\|$ for every $x\in\mathscr{E}$.
		\end{enumerate}
	\end{corollary}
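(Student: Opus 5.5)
The corollary follows by specializing Theorem~\ref{thm:norm-characterization} to the pair $(T,S)$ with $S:=T^*\in\mathbb{L}(\mathscr{E})$. All hypotheses of that theorem are then in force. The algebra $\mathscr{A}$ is unital, $C$ is a $\sharp$-conjugation on $\mathscr{E}$, both $T$ and $S=T^*$ lie in $\mathbb{L}(\mathscr{E})$, and $\overline{\ran(T)}$ is orthogonally complemented by assumption.

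With this choice, $S^*S=T^{**}T^*=TT^*$. Condition (i) of Theorem~\ref{thm:norm-characterization} therefore reads $T^*T\geq CTT^*C$, which is exactly the $C$-hyponormality of $T$ as defined in (i) of the corollary. Likewise, condition (ii) of the theorem reads $\|Tx\|\geq\|T^*Cx\|$ for all $x\in\mathscr{E}$, which is (ii) of the corollary. The equivalence then follows directly from the theorem.

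The only point to check is that the operator $CTT^*C$ makes sense as a positive element of $\mathbb{L}(\mathscr{E})$, so that the order relation in (i) is meaningful. This is already covered by the discussion preceding Definition~\ref{moxesk}. The map $R\mapsto R^C=CR^*C$ sends $\mathbb{L}(\mathscr{E})$ into itself. Since $TT^*$ is self-adjoint, we have $CTT^*C=(TT^*)^C$. Positivity follows from the computation $\langle CTT^*Cx,x\rangle=\langle T^*Cx,T^*Cx\rangle^{\sharp\sharp}\ge0$, which uses the defining identity $\langle Cu,v\rangle=\langle u,Cv\rangle^\sharp$ twice together with the fact that $\sharp$ preserves positivity. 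I do not expect any real obstacle. The substantive work, namely the module Douglas theorem \cite[Theorem 2.4]{FMX}, is already contained in Theorem~\ref{thm:norm-characterization}.
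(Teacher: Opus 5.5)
Your proposal is correct and matches the paper, which gives no separate proof and obtains the corollary as the specialization $S=T^*$ of Theorem~\ref{thm:norm-characterization}, so that $CS^*SC=CTT^*C$ and $\|SCx\|=\|T^*Cx\|$. One small slip in your optional positivity remark: the identity is used only once and gives $\langle CTT^*Cx,x\rangle=\langle T^*Cx,T^*Cx\rangle^\sharp$ with a single $\sharp$ (not $\sharp\sharp$), which is positive because $\sharp$ preserves positivity; alternatively, note that $CTT^*C=(CTC)(CTC)^*\geq 0$.
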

	
	Now, we present a characterization of $C$-symmetric forms in certain Hilbert $C^*$-modules, which is a counterpart of \cite[Lemma 2.27]{G4}.
	
	For a conjugation $C$ on a Hilbert $\mathscr{A}$-module $\mathscr{E}$, define
	\[
	[x,y]:=\langle Cx,y\rangle,\qquad x,y\in\mathscr{E}.
	\]
	Then $[\cdot,\cdot]$ is bilinear and satisfies $[x,ya]=[x,y]a$ and $[xa,y]=a^{\sharp *}[x,y]$ for all $a\in\mathscr{A}$. Moreover, $[x,y]=[y,x]^{*\sharp}$, since
	\[
	[y,x]^{*\sharp}=\langle Cy,x\rangle^{*\sharp}=\langle x,Cy\rangle^\sharp=\langle Cx,y\rangle=[x,y].
	\]
	Thus, in general, $[\cdot,\cdot]$ is not an ordinary symmetric bilinear form unless
	\[
	a^{*\sharp}=a.
	\]

	Let us establish a module analogue of the correspondence between symmetric bilinear forms and $C$-symmetric operators in the setting of Hilbert spaces.

	\begin{theorem}\label{th:main_corrected}
		Let $\mathscr{M}$ be a $W^*$-algebra, let $\mathscr{E}$ be a self-dual Hilbert $\mathscr{M}$-module equipped with a conjugation $C$ associated with a $*$-conjugate-automorphism $\sharp$ of $\mathscr{M}$. 
		Suppose that $\Gamma:\mathscr{E}\times\mathscr{E}\to\mathscr{M}$ is a bounded form that is bilinear and $\mathscr{M}$-linear in the second variable ($\Gamma(x,ya)=\Gamma(x,y)a$), and symmetric ($\Gamma(x,y)^\sharp=\Gamma(y,x)^*$). Then there exists a unique $C$-symmetric operator $T\in\mathbb{L}(\mathscr{E})$ satisfying
		\begin{align}\label{bil}
			\Gamma(x,y) =[Tx,y],\qquad x,y\in\mathscr{E}.
		\end{align}
		Conversely, every $C$-symmetric operator $T\in\mathbb{L}(\mathscr{E})$ determines a bounded symmetric bilinear form which is $\mathscr{M}$-linear in both variables.
	\end{theorem}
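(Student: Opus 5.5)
The plan is to build $T$ from the self-duality of $\mathscr{E}$ (a Riesz-type representation) and then read off $C$-symmetry from the symmetry of $\Gamma$. Fix $x\in\mathscr{E}$. The map $y\mapsto\Gamma(x,y)$ is bounded, complex-linear and $\mathscr{M}$-linear, since $\Gamma(x,ya)=\Gamma(x,y)a$. Self-duality then gives a unique $z_x\in\mathscr{E}$ with $\Gamma(x,y)=\langle z_x,y\rangle$ for all $y$, and moreover $\|z_x\|\le\|\Gamma\|\,\|x\|$.

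Next I will record the properties of $x\mapsto z_x$. From linearity of $\Gamma$ in the first variable and uniqueness of the representing vector, $z_{x+x'}=z_x+z_{x'}$. Also $\langle z_{\lambda x},y\rangle=\lambda\langle z_x,y\rangle=\langle\bar\lambda z_x,y\rangle$, so $z_{\lambda x}=\bar\lambda z_x$. Thus $x\mapsto z_x$ is bounded and conjugate-linear. Define $Tx:=Cz_x$. Then $T$ is bounded and complex-linear, and since $C^2=I$,
\[
[Tx,y]=\langle CTx,y\rangle=\langle z_x,y\rangle=\Gamma(x,y),
\]
which is \eqref{bil}.

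The key step, and the one that needs care, is to show that $T$ is adjointable with $T^*=CTC$. Adjointability is what makes $T$ an element of $\mathbb{L}(\mathscr{E})$ and gives its $\mathscr{M}$-linearity for free; this matters because $\Gamma$ is not assumed to be module-linear in the first variable. Using $\langle Cu,v\rangle=\langle u,Cv\rangle^\sharp$ from Definition~\ref{ours} and then the symmetry hypothesis, I compute
\[
\langle Tx,y\rangle=\langle Cz_x,y\rangle=\langle z_x,Cy\rangle^\sharp=\Gamma(x,Cy)^\sharp=\Gamma(Cy,x)^*=\langle z_{Cy},x\rangle^*=\langle x,z_{Cy}\rangle .
\]
Hence $T$ is adjointable with $T^*y=z_{Cy}=CTCy$. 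Therefore $T=CT^*C=T^C$, so $T$ is $C$-symmetric.

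Uniqueness: if $T'$ also satisfies \eqref{bil}, then $\langle C(T-T')x,y\rangle=0$ for all $y$. This forces $C(T-T')x=0$, and since $C$ is injective, $T=T'$.

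For the converse, given a $C$-symmetric $T\in\mathbb{L}(\mathscr{E})$, set $\Gamma(x,y):=\langle CTx,y\rangle$. This is bounded, bilinear (as $CT$ is conjugate-linear and the inner product is conjugate-linear in its first slot), and $\mathscr{M}$-linear in $y$. For symmetry I will run the chain above backwards, using $CT=T^*C$:
\[
\Gamma(x,y)^\sharp=\langle Tx,Cy\rangle=\langle x,T^*Cy\rangle=\langle x,CTy\rangle=\langle CTy,x\rangle^*=\Gamma(y,x)^*.
\]
In the first variable, module-linearity holds in the twisted sense $\Gamma(xa,y)=a^{\sharp*}\Gamma(x,y)$, as noted for $[\cdot,\cdot]$ just before the theorem. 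I will state explicitly that this is the meaning of ``$\mathscr{M}$-linear in both variables.''
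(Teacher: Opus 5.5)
Your proof is correct. It follows the paper's overall plan: a pointwise Riesz representation from self-duality, the symmetry of $\Gamma$ turned into $C$-symmetry, and uniqueness from nondegeneracy plus injectivity of $C$. The key step, however, is handled differently.

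The paper represents $y\mapsto\Gamma(Cx,y)$ by a vector $Sx$, so that $S$ is linear from the start. It then:
\begin{itemize}
\item uses the symmetry of $\Gamma$ once to verify by hand that $S(xa)=S(x)a$;
\item appeals to Paschke's result that every bounded module map on a self-dual module is adjointable, to get $S\in\mathbb{L}(\mathscr{E})$;
\item sets $T:=S^*$ and uses symmetry a second time to obtain $T^*C=CT$.
\end{itemize}
The paper's $S$ is exactly your $T^*$, namely $y\mapsto z_{Cy}$, so both constructions produce the same operator.

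You instead represent $\Gamma(x,\cdot)$ itself and accept the conjugate-linear map $x\mapsto z_x$. A single symmetry computation then writes down the adjoint explicitly as $T^*=CTC$. This gives adjointability, $\mathscr{M}$-linearity and $C$-symmetry at once, and uses self-duality only for the Riesz step. Neither the separate module-linearity check nor the adjointability theorem for self-dual modules is needed.

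Your converse is also a little tidier. You take $\Gamma(x,y)=[Tx,y]$, which matches the forward direction, whereas the paper uses $[T^*x,y]$; that is harmless, since $(CT^*C)^*=CTC$ shows $T^*$ is $C$-symmetric as well. You also correctly point out that in the first variable only the twisted rule $\Gamma(xa,y)=(a^\sharp)^*\Gamma(x,y)$ holds. The paper's proof passes over this by checking module-linearity only in the second variable, even though the statement asserts it in both.
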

	
	\begin{proof}
		For each fixed $x\in\mathscr{E}$, we set
		\[
		\Phi_x:\mathscr{E}\to \mathscr{M},\qquad
		\Phi_x(y):=\Gamma(Cx,y).
		\]
		Since $\Gamma$ is bounded, linear, and $\mathscr{M}$-linear in the second variable,
		$\Phi_x$ is a bounded $\mathscr{M}$-linear functional on $\mathscr{E}$.
		By self-duality of $\mathscr{E}$, there exists a unique vector
		$Sx\in\mathscr{E}$ with $\|Sx\|=\|\Phi_x\|$ such that
		$$\Gamma(Cx,y)=\langle Sx,y\rangle \qquad y\in\mathscr{E}.$$ 
		Therefore,
		\[\Gamma(x,y)=\langle SCx,y\rangle,\qquad x,y \in \mathscr{E}.\]
		
		Thus, a map $S:\mathscr{E}\to\mathscr{E}$ can be defined by $x\mapsto Sx$. The map $S$ is linear since 
		$$\langle S(\lambda x),y\rangle=\Gamma(C(\lambda x),y)=\Gamma(\overline{\lambda}Cx,y)=\overline{\lambda}\Gamma(Cx,y)=\overline{\lambda}\langle Sx,y\rangle =\langle \lambda Sx,y\rangle$$
		for $\lambda\in\mathbb{C}$. It is evidently bounded with $\|S\|=\|\Gamma\|$. Since $\Gamma$ is symmetric, we have 
		\begin{align*}
			\langle S(xa),y\rangle^\sharp&=\Gamma(C(xa),y)^\sharp=\Gamma(y, C(xa))^*=\Gamma(y,Cxa^\sharp)^*=(\Gamma(y,Cx)a^\sharp)^*\\
			&=(a^\sharp)^*\Gamma(y,Cx)^*=(a^\sharp)^*\Gamma(Cx,y)^\sharp =(a^*)^\sharp\langle Sx,y\rangle^\sharp\\
			&=(a^*\langle Sx,y\rangle)^\sharp=\langle Sx a,y\rangle^{\sharp}
		\end{align*} 
		whence $ S(xa)=Sx a$.

		Since every bounded $\mathscr{M}$-linear map on a self-dual Hilbert
		$\mathscr{M}$-module is adjointable \cite[Corollary 3.5]{PAS}, we conclude that
		$S\in\mathbb{L}(\mathscr{E})$. Set $T=S^*$. In addition, 
		\begin{align*}
			\langle T^*Cx,y\rangle^\sharp&=\Gamma(x,y)^\sharp=\Gamma(y,x)^*=\langle SCy,x\rangle^*=\langle S^*x,Cy\rangle=\langle CS^*x,y\rangle^\sharp\\
			& =\langle CTx,y\rangle^\sharp=[Tx,y]^\sharp.
		\end{align*} 
		Hence, $T^*C=CT$, and so $T$ is $C$-symmetric. In addition, $\Gamma(x,y)=[Tx,y]$ as required.
		
		The uniqueness follows immediately from the nondegeneracy of the inner product. In fact, let $T, T'\in\mathbb{L}(\mathscr{E})$ satisfies $\langle CTx,y\rangle=[Tx,y]=\Gamma(x,y)=[T'x,y]=\langle CT'x,y\rangle$ for all $x,y\in \mathscr{E}$. Then $\langle C(T-T')x,y\rangle=0$ for every $x,y$. Hence, $C(T-T')x=0$ for all $x$. It follows from injectivity of $C$ that $T=T'$.
		
		Conversely, let $T\in\mathbb{L}(\mathscr{E})$ satisfy $T=CT^*C$, and let us define $\Gamma_T(x,y):=[T^*x,y]=\langle CT^*x, y\rangle$. Then,
		\[
		\|\Gamma_T(x,y)\| =\|\langle CT^*x,y\rangle\| \leq \|T^*\|\,\|x\|\,\|y\|,
		\]
		so $\Gamma_T$ is bounded. The bilinearity and $\mathscr{M}$-linearity of $\Gamma$ in the second variable can be verified. Let us prove only the $\mathscr{M}$-linearity on the second variable: 
		$$\Gamma_T(x,ya)=\langle CT^*(x),ya\rangle = \langle CT^*x, y\rangle a=\Gamma_T(x,y)a,$$
		Since $T^*\in\mathbb{L}(\mathscr{E})$ we have 
		\begin{align*}
			\Gamma_T(x,y)^\sharp&=\langle CT^*x,y\rangle^\sharp=\langle T^*x,Cy\rangle=\langle Cy,T^*x\rangle^*\\
			&=\langle TCy,x\rangle^*=\langle CT^*y,x\rangle^*=\Gamma_T(y,x)^*
		\end{align*}
		from which we conclude that $\Gamma_T$ is symmetric.
	\end{proof}
	
	Now, we explore the $\sharp$-conjugate dual.
	
	\begin{definition}
		Set
		\[
		\mathscr{E}^{\prime}_{c,\sharp}
		=
		\left\{
		f:\mathscr{E}\to\mathscr{A}:
		\begin{array}{l}
			f\text{ is bounded and conjugate-linear},\\
			f(xa)=f(x)a^\sharp
			\quad (x\in\mathscr{E},\ a\in\mathscr{A})
		\end{array}
		\right\}.
		\]
		We call $\mathscr{E}^{\prime}_{c,\sharp}$ the
		\emph{$\sharp$-conjugate dual} of $\mathscr{E}$.
	\end{definition}
	
	We equip $\mathscr{E}^{\prime}_{c,\sharp}$ with the right
	$\mathscr{A}$-module structure
	\[
	(fa)(x)=a^*f(x),
	\qquad
	f\in\mathscr{E}^{\prime}_{c,\sharp},
	\quad
	a\in\mathscr{A},
	\quad
	x\in\mathscr{E}.
	\]
	This is well-defined, since
	\begin{align*}
		(fa)(xb)
		&=
		a^*f(xb)=a^*f(x)b^\sharp=(fa)(x)b^\sharp.
	\end{align*}
	Set
	\[
	\mathscr{E}':=
	\left\{
	g:\mathscr{E}\to\mathscr{A}:
	\begin{array}{l}
		g\text{ is bounded and linear},\\
		g(xa)=g(x)a\quad (x\in\mathscr{E},\ a\in\mathscr{A})
	\end{array}
	\right\}
	\]
	be the ordinary $\mathscr{A}$-module dual, equipped with the right
	$\mathscr{A}$-module structure
	\[
	(ga)(x)=a^*g(x).
	\]
	
	\begin{theorem}\label{thm:conjugate_dual_isomorphism}
		Let $\mathscr{E}$ be a Hilbert $\mathscr{A}$-module equipped with a
		conjugation $C$ associated with $\sharp$. Then
		\[
		\Phi:\mathscr{E}^{\prime}_{c,\sharp}\to\mathscr{E}',
		\qquad\Phi(f)=f\circ C,
		\]
		is a bijective right $\mathscr{A}$-module map. Its inverse is $\Phi^{-1}(g)=g\circ C$. Consequently, 
		\[
		\mathscr{E}^{\prime}_{c,\sharp}
		\cong
		\mathscr{E}'
		\]
		as right $\mathscr{A}$-modules. By considering the operator norm on $\mathscr{E}'$ and $\mathscr{E}^{\prime}_{c,\sharp}$, $\Phi$ is isometric.
	\end{theorem}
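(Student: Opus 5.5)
The proof will be a direct verification built from the two properties of $C$ we already have. First, $C$ is conjugate-linear and satisfies $C(xa)=C(x)a^\sharp$; this is the identity derived for every $S\in\mathbb{L}_c(\mathscr{E})$, which applies since $C^*=C$. Second, $C$ is an isometric involution.

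\emph{Step 1: $\Phi$ lands in $\mathscr{E}'$.} Take $f\in\mathscr{E}^{\prime}_{c,\sharp}$ and put $g=f\circ C$. The map $g$ is linear because it is a composition of two conjugate-linear maps, and it is bounded with $\|g\|\le\|f\|$. For $\mathscr{A}$-linearity,
\[
g(xa)=f(C(x)a^\sharp)=f(Cx)(a^\sharp)^\sharp=g(x)a .
\]

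\emph{Step 2: the inverse map.} For $g\in\mathscr{E}'$ put $f=g\circ C$. This $f$ is conjugate-linear and bounded, and
\[
f(xa)=g(C(x)a^\sharp)=g(Cx)a^\sharp=f(x)a^\sharp,
\]
so $f\in\mathscr{E}^{\prime}_{c,\sharp}$. Since $C^2=I_{\mathscr{E}}$, we get $\Phi(g\circ C)=g\circ C^2=g$ and $(f\circ C)\circ C=f$. Hence $\Phi$ is bijective with $\Phi^{-1}(g)=g\circ C$.

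\emph{Step 3: module map and isometry.} For the module structure,
\[
\Phi(fa)(x)=(fa)(Cx)=a^*f(Cx)=(\Phi(f)a)(x).
\]
Additivity of $\Phi$ is clear. If complex homogeneity is wanted, one must first fix the scalar structure on $\mathscr{E}^{\prime}_{c,\sharp}$; with pointwise scalars, $\Phi(\lambda f)=\lambda\Phi(f)$ holds trivially. The statement only asserts a right $\mathscr{A}$-module map, which is what we have checked. For the isometry: $C$ is a surjective isometry of $\mathscr{E}$ (Definition \ref{ours}), so it maps the unit ball onto itself. Therefore
\[
\|f\circ C\|=\sup_{\|x\|\le1}\|f(Cx)\|=\sup_{\|y\|\le1}\|f(y)\|=\|f\|.
\]

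No step is a real obstacle. The one point needing care is Step 1, which depends on the identity $C(xa)=C(x)a^\sharp$ together with involutivity of $\sharp$, $(a^\sharp)^\sharp=a$. This is exactly what turns the $\sharp$-twisted module property into ordinary $\mathscr{A}$-linearity.
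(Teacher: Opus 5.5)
Your proposal is correct and follows the paper's proof: you use $C(xa)=C(x)a^\sharp$ with $(a^\sharp)^\sharp=a$ to pass between the two module conditions, get bijectivity and $\Phi^{-1}(g)=g\circ C$ from $C^2=I_{\mathscr{E}}$, and check $\Phi(fa)=\Phi(f)a$ pointwise. The only cosmetic difference is the isometry step: you use that $C$ maps the unit ball onto itself, whereas the paper derives $\|f\circ C\|\le\|f\|$ and $\|f\|=\|\Phi(f)\circ C\|\le\|\Phi(f)\|$ from $\|C\|\le 1$ and $C^2=I_{\mathscr{E}}$.
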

	
	\begin{proof}
		Let $f\in\mathscr{E}^{\prime}_{c,\sharp}$. Since both $f$ and $C$
		are conjugate-linear, $f\circ C$ is linear. Moreover,
		$$(f\circ C)(xa)=f(C(xa))=f(Cx\,a^\sharp)=f(Cx)(a^\sharp)^\sharp=f(Cx)a=(f\circ C)(x)a.$$
		Hence, $f\circ C\in\mathscr{E}'$.
		
		Conversely, if $g\in\mathscr{E}'$, then $g\circ C$ is
		conjugate-linear and
		$$(g\circ C)(xa)=g(C(xa))=g(Cx\,a^\sharp)=g(Cx)a^\sharp=(g\circ C)(x)a^\sharp.$$
		Thus
		\[
		g\circ C\in\mathscr{E}^{\prime}_{c,\sharp}.
		\]
		Since $C^2=I$, $\Phi$ is bijective and $\Phi^{-1}(g)=g\circ C$. Moreover, 
		$$\Phi(fa)(x)=(fa)(Cx)=a^*f(Cx)=a^*\Phi(f)(x)=(\Phi(f)a)(x).$$
		Therefore, $\Phi(fa)=\Phi(f)a$,so $\Phi$ is a right $\mathscr{A}$-module isomorphism. 
		
		In addition, $\|\Phi(f)\|=\|f \circ C\| \leq \|f\|\,\|C\|\leq \|f\|$ and $\|f\|=\|f\circ (C \circ C)\|=\|\Phi(f) \circ C\|\leq \|\Phi(f)\|\,\|C\|\leq \|\Phi(f)\|$, since $C$ is involutive and isometric. Thus, $\|\Phi(f)\|=\|f\|$.
	\end{proof}

	For each $x\in\mathscr{E}$, let us define $\widehat{x}:\mathscr{E}\to\mathscr{A}$ by 
	$$\widehat{x}(y)=\langle x,Cy\rangle.$$
	Then, $\widehat{x}\in \mathscr{E}^{\prime}_{c,\sharp}$ and 
	$$\Phi(\widehat{x})(y)=(\widehat{x}\circ C)(y)=\langle x,C(Cy)\rangle=\langle x,y\rangle.$$
	Consequently, under the isomorphism $\Phi$, $x \hookrightarrow \Phi(\widehat{x})$ gives an algebraic embedding of $\mathscr{E}$ into $\mathscr{E}^{\prime}$ corresponding to the usual canonical embedding of $\mathscr{E}$ into its ordinary dual.

	We now impose the additional hypothesis that $\mathscr{A}$ is a $W^*$-algebra. Theorem~3.2 of \cite{PAS} gives an $\mathscr{A}$-valued inner product on the ordinary dual $\mathscr{E}'$ which extends the original inner product and makes $\mathscr{E}'$ a self-dual Hilbert $\mathscr{A}$-module. We denote this inner product by
	$\langle\cdot,\cdot\rangle_P$. We can transport this inner product to $\mathscr{E}^{\prime}_{c,\sharp}$ through the isomorphism $\Phi$.
	
	\begin{theorem}\label{thm:conjugate_dual_inner_product}
		Let $\mathscr{A}$ be a $W^*$-algebra equipped with a $*$-conjugate-automorphism $\sharp$, and let $\mathscr{E}$ be a right Hilbert $\mathscr{A}$-module equipped with a $\sharp$-conjugation $C$. Then the formula
		\[ \langle f,g\rangle_{c,\sharp}=\langle f\circ C,g\circ C\rangle_P \]
		defines an $\mathscr{A}$-valued inner product on the right $\mathscr{A}$-module $\mathscr{E}^{\prime}_{c,\sharp}$, making it into a self-dual Hilbert $\mathscr{A}$-module.
	\end{theorem}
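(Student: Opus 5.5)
The plan is to transport everything through the map $\Phi$ of Theorem~\ref{thm:conjugate_dual_isomorphism}. Since $\Phi$ is a bijective right $\mathscr{A}$-module map, every algebraic property of $\langle\cdot,\cdot\rangle_P$ on $\mathscr{E}'$ pulls back to $\langle\cdot,\cdot\rangle_{c,\sharp}$, because $\langle f,g\rangle_{c,\sharp}=\langle \Phi(f),\Phi(g)\rangle_P$ by definition. I would first settle the vector-space structure, since scalars are the one point needing care. On $\mathscr{E}'$, Paschke's module structure $(ga)(x)=a^*g(x)$ pairs with the scalar action $(\lambda g)(x)=\bar\lambda g(x)$. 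On $\mathscr{E}^{\prime}_{c,\sharp}$ I would use the analogous action $(\lambda f)(x)=\bar\lambda f(x)$, so that $(\lambda f)(xb)=\bar\lambda f(x)b^\sharp$ and the space remains in $\mathscr{E}^{\prime}_{c,\sharp}$. A direct check then gives $\Phi(\lambda f)=\lambda\Phi(f)$, so $\Phi$ is complex linear, $\mathscr{A}$-linear, and bijective.

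Next I would verify the inner product axioms by pulling them back. Linearity in the second variable: $\langle f,g a\rangle_{c,\sharp}=\langle \Phi f,(\Phi g)a\rangle_P=\langle f,g\rangle_{c,\sharp}a$, and similarly for scalars. The symmetry $\langle f,g\rangle^*_{c,\sharp}=\langle g,f\rangle_{c,\sharp}$ transfers directly. Positivity $\langle f,f\rangle_{c,\sharp}=\langle\Phi f,\Phi f\rangle_P\ge0$ also transfers. Definiteness follows because $\langle f,f\rangle_{c,\sharp}=0$ forces $\Phi f=0$, and injectivity of $\Phi$ then gives $f=0$.

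Completeness and self-duality would likewise be transported. The norm $\|f\|_{c,\sharp}=\|\Phi f\|_P$ makes $\Phi$ an isometric unitary isomorphism of inner product modules, so $\mathscr{E}^{\prime}_{c,\sharp}$ is complete because $(\mathscr{E}',\langle\cdot,\cdot\rangle_P)$ is. For self-duality, take a bounded $\mathscr{A}$-linear $\psi:\mathscr{E}^{\prime}_{c,\sharp}\to\mathscr{A}$. Then $\psi\circ\Phi^{-1}$ is a bounded $\mathscr{A}$-linear functional on $\mathscr{E}'$, so by self-duality of $\mathscr{E}'$ there is some $h$ with $\psi(\Phi^{-1}k)=\langle h,k\rangle_P$. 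Setting $f_0=\Phi^{-1}h$ gives $\psi(g)=\langle f_0,g\rangle_{c,\sharp}$.

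The main obstacle is bookkeeping rather than depth. First, the scalar multiplication on $\mathscr{E}^{\prime}_{c,\sharp}$ must be chosen so that $\Phi$ is genuinely complex linear. Second, the norm induced by $\langle\cdot,\cdot\rangle_{c,\sharp}$ must be understood as Paschke's norm transported via $\Phi$. It need not coincide with the operator norm in general, although it does on the image of $\mathscr{E}$: there $\langle\widehat x,\widehat y\rangle_{c,\sharp}=\langle x,y\rangle$, by the computation $\Phi(\widehat x)=\langle x,\cdot\rangle$ preceding the statement. This also shows that the new inner product extends the original one under the embedding $x\mapsto\widehat x$.
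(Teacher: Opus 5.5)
Your proposal is correct and follows essentially the same route as the paper. The paper likewise defines $\langle f,g\rangle_{c,\sharp}=\langle\Phi(f),\Phi(g)\rangle_P$, transfers the inner-product axioms and completeness through the module isomorphism $\Phi$, and proves self-duality by composing a bounded $\mathscr{A}$-linear functional with $\Phi^{-1}$. Your explicit choice of the scalar action $(\lambda f)(x)=\bar\lambda f(x)$, which makes $\Phi$ genuinely complex linear, is a detail the paper leaves implicit.

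One side remark should be corrected, although your argument never uses it. I believe Paschke's theorem gives that the norm induced by $\langle\cdot,\cdot\rangle_P$ on $\mathscr{E}'$ coincides with the operator norm. Combined with the isometry in Theorem~\ref{thm:conjugate_dual_isomorphism}, the transported norm on $\mathscr{E}^{\prime}_{c,\sharp}$ then equals the operator norm everywhere, not only on the image of $\mathscr{E}$.
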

	
	\begin{proof}
		In light of Theorem~\ref{thm:conjugate_dual_isomorphism}, $\Phi(f)=f\circ C$ is a right $\mathscr{A}$-module isomorphism from $\mathscr{E}^{\prime}_{c,\sharp}$ onto $\mathscr{E}'$. Hence, we can define 
		\[\langle f,g\rangle_{c,\sharp}=\langle\Phi(f),\Phi(g)\rangle_P.\]
		Since $\langle\cdot,\cdot\rangle_P$ is an $\mathscr{A}$-valued inner product, it is immediate that 
		$\mathscr{E}^{\prime}_{c,\sharp}$ is an inner product $\mathscr{A}$-module. Since $\Phi$ is isometric, the norm induced by the inner product on $\mathscr{E}^{\prime}_{c,\sharp}$ is complete.
		
		It remains to prove self-duality. Let $F:\mathscr{E}^{\prime}_{c,\sharp}\to\mathscr{A}$
		be a bounded $\mathscr{A}$-linear map. We define $G:\mathscr{E}'\to\mathscr{A}$ by
		\[G(h)=F(\Phi^{-1}(h)).\]
		Since $\Phi^{-1}$ is a right $\mathscr{A}$-module isometric isomorphism, $G$ is a bounded $\mathscr{A}$-linear map. By the self-duality of $\mathscr{E}'$, there exists $g\in\mathscr{E}'$ such that
		\[G(h)=\langle g,h\rangle_P\qquad(h\in\mathscr{E}').\]
		Set $f:=\Phi^{-1}(g)\in\mathscr{E}^{\prime}_{c,\sharp}$.Then, for every $u\in\mathscr{E}^{\prime}_{c,\sharp}$,
		$$F(u)=G(\Phi(u))=\langle g,\Phi(u)\rangle_P=\langle\Phi(f),\Phi(u)\rangle_P=\langle f,u\rangle_{c,\sharp}.$$
		Hence, every bounded $\mathscr{A}$-linear functional on $\mathscr{E}^{\prime}_{c,\sharp}$ is represented by an element of $\mathscr{E}^{\prime}_{c,\sharp}$. Therefore, $\mathscr{E}^{\prime}_{c,\sharp}$ is self-dual.
	\end{proof}

	\section{Polar decomposition of operators} \label{sec5}
	
	Garcia and Putinar established an interesting result \cite[Theorem 2]{G2}. It states that if $T = U|T|$ is the polar decomposition of a $C$-symmetric operator $T$, then $T = CJ|T|$, where $J$ is a conjugate-partial isometry supported on $\overline{\ran(|T|)}$ and commutes with $|T| = (T^*T)^{1/2}$. In particular, the partial isometry $U$ is $C$-symmetric and factors as $U = CJ$.
	
	Attempting to obtain a similar result in the setting of Hilbert $C^*$-modules with a $\sharp$-conjugation encounters several pathological issues. These arise from the geometric properties of Hilbert $C^*$-modules and the operators acting on them. In contrast with the Hilbert space setting, the polar decomposition does not generally hold in $\mathbb{L}(\mathscr{E})$, the equality $\ker(T)^\perp=\overline{\ran(T^*)}$ generally does not hold in Hilbert $C^*$-modules, a bounded $\mathscr{A}$-linear operator on a Hilbert module need not be adjointable, and a closed submodule may fail to be complemented. 
	
	To get an analogous result to \cite[Theorem 2]{G2}, we need some auxiliary results. 
	
	We recall that an element $U \in \mathbb{L}(\mathscr{E})$ is called a \emph{partial isometry} if $\ran(U)$ is orthogonally complemented and $U$ is isometric on $\ker(U)^\perp$. It is known \cite[p. 30]{LAN} that $U$ is a partial isometry if and only if either $UU^*$ is a projection, or equivalently, $UU^*$ is a projection, and this occurs if and only if $UU^*U=U$.
	
	\begin{definition}
		An operator $T$ in $\mathbb{L}(\mathscr{E})$ or $\mathbb{L}_c(\mathscr{E})$ is \emph{semiregular} if $\overline{\ran(T)}$ and $\overline{\ran(T^*)}$ are orthogonally complemented.
	\end{definition} 
	
	From now on, for an orthogonally complemented submodule $M$, we denote by $P_M$ the projection $P\in \mathbb{L}(\mathscr{E})$ with $\ran(P)=M$. We require the following result; see \cite{FS} for the polar decomposition of unbounded operators.

	\begin{lemma}\cite[Lemma 3.6 and Theorem 3.8]{LIULu}\label{lemXu}
		Let $T\in\mathbb{L}(\mathscr{E})$ be semiregular. Then there exists a partial isometry $U\in\mathbb{L}(\mathscr{E})$ such that
		$$
		T=U|T| \quad\text{and}\quad U^*U=P_{\overline{\ran{(T^* )}}}.
		$$
		Moreover, such a decomposition of $T$ as $T=UQ$ with $Q$ positive and $U$ satisfying the above condition is unique. Furthermore,
		\begin{itemize}
			\item[(i)] all equations $\ker{(U)} = \ker{(T)}$, $\ker{(U^* )} = \ker{(T^* )}$, $\ran{U}=\overline{\ran{(T)}}=\overline{\ran(|T^*|)}$, and $\ran{(U^* )} = \overline{\ran{(T^* )}}=\overline{\ran(|T|)}$ are satisfied;
			
			\item[(ii)] the following equations are also valid:
			$$
			T^*=U^*|T^*| \quad\text{and}\quad UU^*=P_{\overline{\ran{(T)}}},
			$$
			$$
			|T^*|=U|T|U^* \quad\text{and}\quad U|T|=|T^*|U.
			$$
		\end{itemize}
		
	\end{lemma}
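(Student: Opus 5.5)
This lemma is quoted from \cite[Lemma 3.6 and Theorem 3.8]{LIULu}, so we only indicate how one would prove it directly. The plan is to build $U$ from the closures of the ranges of $|T|$ and $T$, which are orthogonally complemented by semiregularity.

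\textbf{Step 1: identify closed ranges and kernels.} Since $\langle |T|x,|T|x\rangle=\langle Tx,Tx\rangle$, the map
\[
V_0:\ran(|T|)\to\ran(T),\qquad V_0(|T|x)=Tx,
\]
is well defined. It is $\mathscr{A}$-linear and preserves inner products. Next we show $\overline{\ran(|T|)}=\overline{\ran(T^*T)}=\overline{\ran(T^*)}$. For this we use the standard Hilbert $C^*$-module fact $\overline{\ran(S^{1/2})}=\overline{\ran(S)}$ for positive $S$, obtained via the approximation $S^{1/2}(S+\varepsilon)^{-1}S^{1/2}\to$ projection-like elements, together with $\ran(T^*T)\subseteq\ran(T^*)$. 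For the reverse inclusion we use that $\ker(T)^\perp=\overline{\ran(T^*)}$ holds precisely because $\overline{\ran(T^*)}$ is complemented. The same argument applied to $T^*$ gives $\overline{\ran(|T^*|)}=\overline{\ran(T)}$.

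\textbf{Step 2: define $U$ and check its basic properties.} Extend $V_0$ by continuity to an isometry $V:\overline{\ran(T^*)}\to\overline{\ran(T)}$. Then set
\[
U=V\,P_{\overline{\ran(T^*)}}.
\]
We must show $U$ is adjointable. The candidate adjoint is $U^*=W P_{\overline{\ran(T)}}$, where $W$ is the inverse of $V$ on its range. $V$ is surjective onto $\overline{\ran(T)}$, since its range is closed (being isometric) and contains $\ran(T)$. The adjoint identity is verified on the dense sets $\ran(|T|)$ and $\ran(T)$, and the orthogonal complements contribute zero. This gives $U^*U=P_{\overline{\ran(T^*)}}$ and $UU^*=P_{\overline{\ran(T)}}$, and $T=U|T|$ holds by construction.

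\textbf{Step 3: uniqueness and the remaining identities.} For uniqueness, suppose $T=U'Q$ with $Q\ge0$ and $U'^*U'=P_{\overline{\ran(T^*)}}$. Then $T^*T=QU'^*U'Q$. Since $\overline{\ran(Q)}\supseteq\ran(T^*)$ up to closure, we get $U'^*U'Q=Q$, hence $T^*T=Q^2$. Uniqueness of positive square roots gives $Q=|T|$, and $U'$ agrees with $U$ on $\ran(|T|)$ and vanishes on its complement.

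Assertions (i) and (ii) then follow by short calculations:
\begin{itemize}
\item $TT^*=U|T|^2U^*$, and $U|T|U^*$ is positive with square $U|T|U^*U|T|U^*=U|T|^2U^*$. Hence $|T^*|=U|T|U^*$.
\item Then $|T^*|U=U|T|U^*U=U|T|$.
\item $T^*=|T|U^*=U^*U|T|U^*=U^*|T^*|$.
\end{itemize}
The range and kernel equalities follow from the construction and Step 1.

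\textbf{Main obstacle.} The delicate point is Step 1's inclusion $\overline{\ran(T^*)}\subseteq\overline{\ran(|T|)}$, together with adjointability of $U$. In Hilbert $C^*$-modules, $\ker(T)^\perp$ may exceed $\overline{\ran(T^*)}$, so the complementedness hypothesis must be used exactly at this point. The plan there is to write $\mathscr{E}=\overline{\ran(T^*)}\oplus\ker(T)$. A vector orthogonal to $\ran(|T|)$ lies in $\ker|T|=\ker T$, and this forces the closures to coincide.
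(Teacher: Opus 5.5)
Your existence construction is the standard one behind the cited result; the paper itself gives no proof. You map $|T|x\mapsto Tx$, extend it to an inner-product-preserving map $\overline{\ran(|T|)}\to\overline{\ran(T)}$, compose with $P_{\overline{\ran(T^*)}}$, and use $P_{\overline{\ran(T)}}$ to write down $U^*$. Your derivations of $|T^*|=U|T|U^*$, $|T^*|U=U|T|$ and $T^*=U^*|T^*|$ are correct. The uniqueness step, however, does not work. From $T=U'Q$ you only get $T^*=QU'^*$, i.e.\ $\ran(T^*)\subseteq\ran(Q)$. To conclude $U'^*U'Q=Q$ you need the opposite inclusion $\ran(Q)\subseteq\overline{\ran(T^*)}$, and nothing in the hypotheses gives it. Indeed, with only $U'^*U'=P_{\overline{\ran(T^*)}}$ the positive factor is \emph{not} unique. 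On $\mathscr{E}=\mathbb{C}^2$ take $T=U'=\mathrm{diag}(1,0)$ and $Q=\mathrm{diag}(1,5)$; then $U'Q=T$ and $U'^*U'=P_{\overline{\ran(T^*)}}$, but $Q\neq|T|$. Even simpler, take $T=U'=0$ and any $Q\geq 0$. So the uniqueness assertion must be read in one of two ways. Either $Q=|T|$ is fixed, and your last sentence proves it: $U'$ agrees with $U$ on $\ran(|T|)$, and $U'^*U'=P$ forces $U'=0$ on $\ker(T)$. This is the only form the paper uses, in Theorem~\ref{thm:decomposition} and the proposition following it. Or one assumes $U'^*U'=P_{\overline{\ran(Q)}}$, in which case $T^*T=QU'^*U'Q=Q^2$ gives $Q=|T|$ at once.

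Your Step~1 also misidentifies the obstacle, and the argument you propose for it is not valid in Hilbert $C^*$-modules. You know $\overline{\ran(|T|)}\subseteq\overline{\ran(T^*)}$, that the latter is complemented, and that both have orthogonal complement $\ker(T)$; this does not force equality. In $C[0,1]$ as a module over itself, $M=\{f: f(0)=0\}$ and $N=C[0,1]$ satisfy $M\subsetneq N$ and $M^\perp=N^\perp=\{0\}$. Passing from $\ker(|T|)^\perp$ to $\overline{\ran(|T|)}$ would require $\overline{\ran(|T|)}$ to be complemented, which is circular. No complementedness is needed here. For every $T\in\mathbb{L}(\mathscr{E})$ one has $\overline{\ran(T^*)}=\overline{\ran(T^*T)}$ \cite[Proposition 3.7]{LAN}, a fact the paper itself uses in the proof of Theorem~\ref{lemmall}. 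The reason is that $T^*-T^*T\,T^*(TT^*+\varepsilon)^{-1}=\varepsilon T^*(TT^*+\varepsilon)^{-1}$ has norm at most $\sqrt{\varepsilon}/2$. Semiregularity enters only in Step~2, where the projections $P_{\overline{\ran(T^*)}}$ and $P_{\overline{\ran(T)}}$ are needed to make $U$ adjointable.
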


	We require the notion of conjugate-partial isometry for our investigation. To explore its properties, we require a lemma.
	
	\begin{lemma}\label{theorthogonal}
		Let $\mathscr{E}$ be a Hilbert $\mathscr{A}$-module endowed with a $\sharp$-conjugation $C$. Suppose that $S\in \mathbb{L}_c(\mathscr{E})$ has closed range. Then $\ran(S)$, $\ker(S)$ , $\ran(S^*)$, and $\ker(S^*)$ are orthogonally complemented submodules.
	\end{lemma}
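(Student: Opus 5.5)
The plan is to reduce everything to the linear case by composing with $C$, and then quote the classical Hilbert $C^*$-module closed-range theorem \cite[Theorem 3.2]{LAN}. Set $T:=CS$.

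First, $T$ is linear, because $C$ and $S$ are both conjugate-linear. Next, $T$ is adjointable. Using $\langle Cu,y\rangle=\langle u,Cy\rangle^\sharp$ and the defining relation of the $\sharp$-adjoint,
\[
\langle CSx,y\rangle=\langle Sx,Cy\rangle^\sharp=\bigl(\langle x,S^*Cy\rangle^\sharp\bigr)^\sharp=\langle x,S^*Cy\rangle ,
\]
so $T^*=S^*C$. This is also the observation, made earlier in the paper, that a product of two elements of $\mathbb{L}_c(\mathscr{E})$ lies in $\mathbb{L}(\mathscr{E})$, since $C^*=C$.

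We then translate ranges and kernels between $S$ and $T$. Since $C$ is a bijective isometry:
\begin{itemize}
\item $\ran(T)=C(\ran(S))$ and $\ker(T)=\ker(S)$;
\item $\ran(T^*)=\ran(S^*)$, because $C$ is onto;
\item $\ker(T^*)=C(\ker(S^*))$.
\end{itemize}
In particular, $\ran(S)$ closed implies $\ran(T)$ closed, as $C$ is a homeomorphism.

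Now apply \cite[Theorem 3.2]{LAN} to $T\in\mathbb{L}(\mathscr{E})$ with closed range. It gives that $\ran(T)$, $\ker(T)$, $\ran(T^*)$ and $\ker(T^*)$ are all closed and orthogonally complemented, with $\ker(T)=\ran(T^*)^\perp$ and $\ker(T^*)=\ran(T)^\perp$. This immediately yields that $\ker(S)=\ker(T)$ and $\ran(S^*)=\ran(T^*)$ are orthogonally complemented.

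For $\ran(S)=C(\ran(T))$ and $\ker(S^*)=C(\ker(T^*))$, we invoke Lemma \ref{lem:C-projection}. It says that $C(M)$ is orthogonally complemented if and only if $M$ is, with $C(M^\perp)=C(M)^\perp$. Applying it with $M=\ran(T)$ and $M=\ker(T^*)$ finishes the proof.

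One point needs care: Lemma \ref{lem:C-projection} requires $M$ and $C(M)$ to be submodules, i.e., closed under the right $\mathscr{A}$-action. This holds because $C(x)a=C(xa^\sharp)$ and $\sharp$ is bijective. As a byproduct we get the identities $\ker(S^*)=\ran(S)^\perp$ and $\ker(S)=\ran(S^*)^\perp$.

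The only real obstacle is making sure the adjoint computation $T^*=S^*C$ is correct with respect to where the $\sharp$'s fall. It hinges on $\sharp$ being involutive, and on the equivalent form $\langle Cx,y\rangle=\langle x,Cy\rangle^\sharp$ of the conjugation property from Definition \ref{ours}. Everything after that is transport of structure through the isometric bijection $C$.
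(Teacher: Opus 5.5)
Your proof is correct, and its core reduction is the same as the paper's: compose $S$ with $C$ to get an element of $\mathbb{L}(\mathscr{E})$, then invoke \cite[Theorem 3.2]{LAN}. The routes differ in how the conclusion is extracted.

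The paper works with $SC$, whose adjoint is $CS^*$. It uses Lance's theorem only to conclude that $\ran(CS^*)$, and hence $\ran(S^*)$, is closed. It then asserts that the complementedness of $\ran(S)$ and $\ran(S^*)$, together with the decompositions $\mathscr{E}=\ran(S^*)\oplus\ker(S)=\ran(S)\oplus\ker(S^*)$, follow ``by the same argument as in the proof of'' Lance's theorem. In other words, it reruns that proof for a conjugate-linear operator without writing it out.

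You instead apply Lance's theorem in full to the linear operator $T=CS$ and pull the four complemented submodules back. You get $\ker(S)=\ker(T)$ and $\ran(S^*)=\ran(T^*)$ directly. You get $\ran(S)=C(\ran(T))$ and $\ker(S^*)=C(\ker(T^*))$ through Lemma~\ref{lem:C-projection}, after correctly checking that $C$ maps submodules to submodules.

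Your version therefore replaces the paper's unverified ``same argument'' step with a complete argument built from results already proved in the paper. It also yields $\ker(S^*)=\ran(S)^\perp$ and $\ker(S)=\ran(S^*)^\perp$ explicitly. Whether you compose on the left or the right is immaterial: with $SC$, the pair $\ran(S),\ker(S^*)$ is read off directly and the pair $\ker(S),\ran(S^*)$ needs the transport lemma.
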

	\begin{proof}
		Since $SC\in \mathbb{L}(\mathscr{E})$ and $\ran(S)=\ran(SC)$, so $\ran(SC)$ is closed. By \cite[Theorem 3.2]{LAN}, $\ran(CS^*)$ is closed. Hence, $C(\ran(S^*))$ is closed, since $C^2=I$ and $C$ is an isometry we conclude that $\ran(S^*)$ is closed. 
		
		Now by the same argument as in the proof of \cite[Theorem 3.2]{LAN}, one can show that $\ran(S)$ and $\ran(S^*)$ are orthogonally complemented submodules of $\mathscr{E}$ as well as $\ran(S^*)\oplus\ker(S)=\mathscr{E}=\ran(S)\oplus\ker(S^*)$, and so $\ker(S)$ and $\ker(S^*)$ are also complemented submodules.
	\end{proof}

	\begin{definition}\label{defpartial}
		Let $\mathscr{E}$ be a Hilbert $C^*$-module over $\mathscr{A}$. An operator $J\in\mathbb{L}_c(\mathscr{E})$ is called a \emph{conjugate-partial isometry} if
		\[
		\langle Jx,Jy\rangle=\langle x,y\rangle^\sharp 
		\]
		for all $x,y\in\ker(J)^\perp$.
		
		The submodules $\ker(J)^\perp$ and $\ran(J)$ are called the initial and final submodules of $J$, respectively. A conjugate-partial isometry $J$ is called a \emph{partial conjugation} if $J^2x=x$ for all $x\in\ker(J)^\perp$.
	\end{definition}

	\begin{lemma}\cite[Lemma 2.3]{XU1}\label{lemranT^alpha}
		Let $A\in\mathbb{L}(\mathscr{E})$ be positive. Then
		\[
		\overline{\ran(A^\alpha)}=\overline{\ran(A)}\qquad(0 <\alpha \leq 1).
		\]
		In particular, $\overline{\ran(|T|)}=\overline{\ran(T^*T)}$.
	\end{lemma}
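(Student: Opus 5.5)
The first step is to show $\overline{\ran(A^\alpha)}\subseteq\overline{\ran(A)}$ for $0<\alpha\le 1$. I will work in the $C^*$-algebra $\mathbb{L}(\mathscr{E})$ using continuous functional calculus, and I will prove the two inclusions separately. Because $\alpha>0$, the function $t^\alpha$ on the spectrum $\mathrm{sp}(A)\subseteq[0,\|A\|]$ vanishes at $0$. By the Weierstrass theorem it can therefore be approximated uniformly on $\mathrm{sp}(A)$ by polynomials $p_n$ with $p_n(0)=0$. Each $p_n(A)=A\,q_n(A)$ has range inside $\ran(A)$. Since $p_n(A)\to A^\alpha$ in norm, we get $A^\alpha x=\lim_n A q_n(A)x\in\overline{\ran(A)}$ for every $x\in\mathscr{E}$. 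This yields the inclusion.

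For the reverse inclusion I will write $A=A^{\alpha}A^{1-\alpha}$ when $\alpha<1$. The case $\alpha=1$ is trivial, and in that product $A^0$ would simply mean the identity. Then $\ran(A)\subseteq\ran(A^\alpha)$, and taking closures gives $\overline{\ran(A)}\subseteq\overline{\ran(A^\alpha)}$. Combining the two inclusions gives equality.

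For the ``in particular'' statement I take $A=T^*T$, which is positive in $\mathbb{L}(\mathscr{E})$, and $\alpha=1/2$. Since $|T|=(T^*T)^{1/2}$, the general identity gives $\overline{\ran(|T|)}=\overline{\ran(T^*T)}$ directly.

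The only step needing care is the first inclusion. There I must approximate by polynomials without constant term, and it matters that uniform convergence on the spectrum gives norm convergence in $\mathbb{L}(\mathscr{E})$. That is standard functional calculus, so I expect no real obstacle: the proof is the same as in the Hilbert space case.
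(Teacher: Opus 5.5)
Your argument is correct and complete. The paper gives no proof of its own here; it imports the lemma from \cite[Lemma 2.3]{XU1}, so your write-up serves as a self-contained replacement for that citation.

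Your two inclusions are the standard ones. For the first, you approximate $t^\alpha$ uniformly on $\mathrm{sp}(A)$ by polynomials vanishing at $0$, so that $A^\alpha$ is a norm limit of operators $Aq_n(A)$. For the second, you use the factorization $A=A^{\alpha}A^{1-\alpha}$. Your version makes clear that only the continuous functional calculus in the unital $C^*$-algebra $\mathbb{L}(\mathscr{E})$ is used. No orthogonal complements enter, and no identity of the form $\ker(T)^\perp=\overline{\ran(T^*)}$. This is why no complementability hypothesis is needed, despite the module pathologies the paper emphasizes in Section~\ref{sec5}.

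One small addition: in Theorem~\ref{lemmall} the paper applies the ``in particular'' clause to a conjugate-linear $T\in\mathbb{L}_c(\mathscr{E})$. Your proof covers that case too. Here $T^*T\in\mathbb{L}(\mathscr{E})$ and $\langle x,T^*Tx\rangle=\langle Tx,Tx\rangle^\sharp\geq 0$, since $(c^*c)^\sharp=(c^\sharp)^*c^\sharp$. So $A=T^*T$ is positive, and your argument with $\alpha=1/2$ applies.
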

	
	\begin{theorem}\label{lemmall}
		Let $\mathscr{E}$ be a Hilbert $C^*$-module equipped with a $\sharp$-conjugation $C$, and let $T\in\mathbb{L}_c(\mathscr{E})$ be semiregular. Then $T$ admits a polar decomposition
		$T=V|T|$, where $V\in\mathbb{L}_c(\mathscr{E})$ is a conjugate partial isometry with the initial submodule $\overline{\ran(T^*)}$, in particular, $V^*T=|T|$. Furthermore, $$T=CJ|T|,$$ where $J$ is a partial isometry with $J^*J=P$, where $P := V^*V$ is the initial projection of $V$. In addition, if $V$ is $C$-symmetric, then $J^2 = P$ and $JP= PJ = J$.
	\end{theorem}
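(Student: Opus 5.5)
The plan is to reduce everything to the linear case by composing with $C$. Since $T\in\mathbb{L}_c(\mathscr{E})$ and $C\in\mathbb{L}_c(\mathscr{E})$ with $C^*=C$, the product $S:=CT$ belongs to $\mathbb{L}(\mathscr{E})$, with adjoint $S^*=T^*C$. Here $|T|$ means $(T^*T)^{1/2}$, and $T^*T\in\mathbb{L}(\mathscr{E})$ is positive. We have $S^*S=T^*CCT=T^*T$, so $|S|=|T|$.

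For the ranges, $\ran(S)=C(\ran(T))$, and hence $\overline{\ran(S)}=C(\overline{\ran(T)})$. By Lemma~\ref{lem:C-projection} this submodule is orthogonally complemented because $\overline{\ran(T)}$ is. Also $\ran(S^*)=\ran(T^*C)=\ran(T^*)$, which is complemented by assumption. So $S$ is semiregular, and Lemma~\ref{lemXu} gives a partial isometry $U\in\mathbb{L}(\mathscr{E})$ with $S=U|T|$ and $U^*U=P_{\overline{\ran(T^*)}}$.

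Next set $V:=CU$, which is conjugate-linear and $\sharp$-adjointable with $V^*=U^*C$. Then $T=CS=CU|T|=V|T|$. Moreover $V^*V=U^*U=P$, and this is the projection onto $\overline{\ran(T^*)}$. From Lemma~\ref{lemXu}(i), $\ker(V)=\ker(U)=\ker(T)$, and $\ker(U)^\perp=\ran(U^*)=\overline{\ran(T^*)}$, so the initial submodule is as claimed.

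For $x,y\in\ker(V)^\perp$ we compute
\[
\langle Vx,Vy\rangle=\langle CUx,CUy\rangle=\langle Ux,Uy\rangle^\sharp=\langle U^*Ux,y\rangle^\sharp=\langle x,y\rangle^\sharp .
\]
So $V$ is a conjugate-partial isometry. In addition, $V^*T=U^*U|T|=P|T|=|T|$, because $\ran(|T|)\subseteq\overline{\ran(T^*)}$ by Lemma~\ref{lemXu}(i).

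For the factorization, take $J:=U=CV$, which is a linear partial isometry. Then $CJ|T|=CU|T|=T$, and $J^*J=U^*U=V^*V=P$, which is exactly the required identity.

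The last assertion is where care is needed, and it depends on reading ``$V$ is $C$-symmetric'' correctly. For a conjugate-linear $V$, I will interpret it as $V=CV^*C$, equivalently $CV=V^*C$. Then
\[
J^2=CVCV=CV\,V^*C=C(VV^*)C,
\]
which is not obviously equal to $P$. This suggests that $J$ should instead be defined so that $J^2$ involves $V^*V$.

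I would therefore try the alternative reading that $J$ is a conjugate partial isometry, as in \cite{G2}, with $T=CJ|T|$ meaning $J:=CV$ as a conjugate-linear operator. I would also consider reading $C$-symmetric as $V^*=CVC$. Under that reading, $J=CV$ gives
\[
J^*=V^*C=CV=J ,\qquad J^2=J^*J=V^*CCV=V^*V=P .
\]
Then $JP=J$ follows from $VP=V$, since $V$ vanishes on $\ker(V)=\ran(P)^\perp$. Finally, $PJ=J^*JJ=JJ^*J=J$.

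The main obstacle is pinning down the intended symmetry condition so that $J$ becomes self-adjoint. I will adopt the one that yields $J^*=J$, namely $CV=V^*C$ with $J=CV$, and check it against the Garcia--Putinar statement, where $U=CJ$ is $C$-symmetric.
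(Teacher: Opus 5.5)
Your proof is correct and follows the paper's route: set $S=CT$, obtain the polar decomposition $S=U|T|$ from Lemma~\ref{lemXu}, then put $V=CU$ and $J=U=CV$. The difficulty you raise in the last step is not real, since $V=CV^*C$, $CV=V^*C$ and $V^*=CVC$ are one and the same condition. Substituting in the first factor rather than the second gives $J^2=(CV)(CV)=(V^*C)(CV)=V^*V=P$ at once. Also, $J=CV=U$ is linear, not conjugate-linear, as it must be for $CJ|T|$ to be conjugate-linear.
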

	
	\begin{proof}
		Consider the linear operator $S:=CT$. Since $C$ and $T$ are both conjugate-linear, $S$ is linear. Moreover, $S^*=T^*C$. Due to $C$ is a conjugation, in light of Lemma \ref{lem:C-projection}, $\overline{\ran(S^*)}=\overline{\ran(T^*C)}=\overline{\ran(T^*)}$ and $\overline{\ran(S)}=\overline{\ran(CT)}=\overline{C(\ran(T))}=C(\overline{\ran(T)})$ are orthogonally complemented. So $S\in\mathbb{L}(\mathscr{E})$ is semiregular.
		
		By Lemma \ref{lemXu}, $S$ has a polar decomposition $S=U|S|$, where $U\in\mathbb{L}(\mathscr{E})$ is a partial isometry with $U^*U=P_{\overline{\ran(|S|)}}$. Furthermore,
		$$
		|S|
		=
		(S^*S)^{1/2}
		=
		(T^*CC T)^{1/2}
		=
		(T^*T)^{1/2}
		=
		|T|.
		$$
		Therefore, $CT=U|T|$, and so $T=CU|T|$. Define $$V:=CU.$$
		Since $C$ is conjugate-linear and $U$ is linear, $V$ is conjugate-linear and $V^*V=U^*CCU=U^*U=P_{\overline{\ran(|T|)}}$, in particular, $V^*V|T|=|T|$. Thus, $T=V|T|$ and $V^*T=|T|$.
		
		It remains to show that $V$ is a conjugate-partial isometry. 
		
		Since $TC\in \mathbb{L}(\mathscr{E})$, \cite[Proposition 3.7]{LAN} yields $\overline{\ran((CT)^*)}=\overline{\ran((CT)^*CT)}=\overline{\ran(T^*T)}$. On the other hand, since $C$ is surjective $\overline{\ran((CT)^*)}=\overline{\ran(T^*C)}=\overline{\ran(T^*)}$. Therefore, in light of Lemma \ref{lemranT^alpha},
		$$\overline{\ran(T^*)}=\overline{\ran(T^*T)}=\overline{\ran(|T|)}.$$

		Let $x,y$ belong to the initial submodule of $U$, namely $\overline{\ran(|T|)}=\overline{\ran (T^*)}$. Then 
		$$
		\langle Vx,Vy\rangle
		=
		\langle CUx,CUy\rangle
		=
		\langle Ux,Uy\rangle^\sharp
		=
		\langle x,y\rangle^\sharp,
		$$
		because $U$ is an isometry on its initial submodule. Thus, $V$ is a conjugate-partial isometry.
		
		Now, if we set $J:=U$, then $J$ is a partial isometry with required properties and $T=CJ|T|$.
		
		Now assume that $V$ is $C$-symmetric. Then $V C = C V^*$ and $C V = V^* C$. Therefore,
		$$
		J = C V = V^* C = (C V)^* = U^* = J^*.
		$$
		
		Let $P:=P_{\overline{\ran(|T|)}}=P_{\overline{\ran(T^*)}}$. Then
		$$
		J^2 = (C V)(C V) = (V^* C)(C V) = V^* V = P.
		$$
		We have $JP= PJ = J$, since $J=CV=C(VV^*V)=(CV)(V^* V) =JP = V^* C V^* V = V^*V CV = PJ$. 
	\end{proof}

	Next, we present another main result of this paper.

	\begin{theorem}\label{thm:decomposition}
		Let $\mathscr{A}$ be a $C^*$-algebra, let $\mathscr{E}$ be a Hilbert
		$\mathscr{A}$-module equipped with a $\sharp$-conjugation $C$, and let
		$T\in\mathbb{L}(\mathscr{E})$ be semiregular. Suppose that
		$T=U|T|$ is the polar decomposition of $T$, where $U$ is a partial isometry
		with initial submodule $\overline{\ran(T^*)}$.
		Then there exists a conjugate-partial isometry $J$ such that
		\begin{enumerate}
			\item $J^*J=P$, where $P:=U^*U$ is the initial projection of $U$;
			\item $U=CJ$;
			\item $T=CJ|T|$.
		\end{enumerate}
		Moreover, if $U$ is $C$-symmetric, then $J^2=P$ and $JP=PJ=J$.
	\end{theorem}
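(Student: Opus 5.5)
Set $J:=CU$. Since $C$ is conjugate-linear and $U\in\mathbb{L}(\mathscr{E})$ is linear, $J$ is a bounded conjugate-linear operator. It is $\sharp$-adjointable with $J^*=U^*C$, because
\[
\langle CUx,y\rangle=\langle Ux,Cy\rangle^\sharp=\langle x,U^*Cy\rangle^\sharp .
\]
So $J\in\mathbb{L}_c(\mathscr{E})$. Since $C^2=I$, we get $CJ=C^2U=U$, which is (2). Then (3) follows from $T=U|T|$.

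For (1), compute $J^*J=U^*CCU=U^*U=P$, using the composition rule for $\sharp$-adjoints of conjugate-linear operators from Section \ref{sec3}. To see that $J$ is a conjugate-partial isometry, note first that $\ker(J)=\ker(U)$ because $C$ is injective. By Lemma \ref{lemXu}, $\ker(U)=\ker(T)$ and $\ran(U^*)=\overline{\ran(T^*)}$, and this submodule is orthogonally complemented since $T$ is semiregular. Hence $\ker(J)^\perp=\ker(U)^\perp=\overline{\ran(T^*)}$, the initial submodule of $U$. I expect this identification to be the main obstacle: in Hilbert $C^*$-modules one does not in general have $\ker(U)^\perp=\overline{\ran(U^*)}$. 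Here it follows from $\mathscr{E}=\ran(P)\oplus\ker(P)$ together with $\ker(U)=\ker(P)$, and the latter holds because $U^*U=P$ and $U=UP$. Once this is settled, for $x,y\in\ker(J)^\perp$ we have
\[
\langle Jx,Jy\rangle=\langle CUx,CUy\rangle=\langle Ux,Uy\rangle^\sharp=\langle x,y\rangle^\sharp ,
\]
since $U$ is isometric on its initial submodule.

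For the final assertion, assume $U=CU^*C$. Then
\[
J=CU=C(CU^*C)=U^*C=J^*,
\]
and $J^2=J^*J=P$. For the last identities, $JP=CUP=CU=J$, because $UP=UU^*U=U$. Also $PJ=J^*JJ=J^*P$, and $J^*P=(PJ)^*$; alternatively $PJ=U^*UU^*C=U^*C=J^*=J$, using $U^*UU^*=U^*$. Hence $JP=PJ=J$.
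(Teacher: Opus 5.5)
Your proof is correct, and it produces the same operator as the paper by a more direct route. The paper does not define $J$ from $U$. It first observes that $CTC$ is semiregular and takes its polar decomposition $CTC=V|CTC|=VC|T|C$. It then checks that $CVC$ is a partial isometry with initial submodule $\overline{\ran(T^*)}$ and invokes the uniqueness part of Lemma~\ref{lemXu} to get $CVC=U$. Finally it sets $J:=VC$, which is exactly your $CU$, since $V=CUC$.

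Defining $J:=CU$ outright removes the need for the semiregularity of $CTC$ and for the uniqueness of the polar decomposition. Your verification uses only $U^*U=P$ and $UU^*U=U$, so it works for any partial isometry $U$ with $T=U|T|$. You also prove that $J$ is a conjugate-partial isometry via $\ker(J)=\ker(U)=\ker(P)$ and $\ker(P)^\perp=\ran(P)$. The paper only asserts this step (``since $V$ is a partial isometry''). What the paper's detour buys is a side fact: the polar decomposition is covariant under $C$, in the sense that the partial isometry of $CTC$ is $CUC$.

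In the $C$-symmetric case the paper gets $J=J^*$ from $CJ=CJ^*$ and then uses $J^3=J$. Your direct computations $JP=CUP=J$ and $PJ=U^*UU^*C=U^*C=J$ are equivalent. Your first attempt at $PJ$ also closes on its own: since $J^*=J$, you have $J^*P=JP=J$, so the detour through $(PJ)^*$ is unnecessary.
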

	
	\begin{proof}
		Since $C$ is a conjugation, the operator $CTC$ belongs to
		$\mathbb{L}(\mathscr{E})$. By the uniqueness of the positive square root,
		$|CTC| = ((CTC)^*(CTC))^{1/2} = C(T^*T)^{1/2}C = C|T|C$.
		
		The semiregularity of $T$ is preserved under conjugation by $C$.
		Hence, $CTC$ is semiregular. Its polar decomposition is therefore
		\begin{equation}\label{eq:CTC-polar}
			CTC=V|CTC|=VC|T|C,
		\end{equation}
		where $V\in\mathbb{L}(\mathscr{E})$ is a partial isometry whose
		initial submodule is
		$\overline{\ran((CTC)^*)}
		=\overline{\ran(CT^*C)}
		=C(\overline{\ran(T^*)})$.
		
		It follows from \eqref{eq:CTC-polar} that $T=CVC|T|$. We claim that $CVC$ is a partial isometry whose 
		initial submodule is $\overline{\ran(T^*)}$. Indeed,
		$(CVC)^*(CVC)=CV^*VC$. Since $V^*V$ is the projection onto
		$C(\overline{\ran(T^*)})$, the operator $CV^*VC$
		is the projection onto $\overline{\ran(T^*)}$.
		Thus, $CVC$ is a partial isometry with initial submodule
		$\overline{\ran(T^*)}$.
		
		Therefore, $T=(CVC)|T|$ is a polar decomposition of $T$ with the 
		same prescribed initial submodule as $T=U|T|$. By the uniqueness of 
		the polar decomposition, $CVC=U$.
		
		Define $J:=VC$. Then $J$ is conjugate-linear. Moreover, since $V$ 
		is a partial isometry, $J$ is a conjugate-partial isometry. From 
		$J=VC$, we derive that $CJ=CVC=U$, and hence $T=U|T|=CJ|T|$. This proves 
		assertions (2) and (3).
		
		Let $P:=U^*U$. Since $U=CVC$, we have
		$P = U^*U = (CVC)^*(CVC) = CV^*VC$. On the other hand, $J^*J = CV^*VC = P$. This proves assertion (1).
		
		Now suppose that $U$ is $C$-symmetric. Then $U=CJ$ yields $U=CU^*C=CJ^*CC=CJ^*$. Hence, $CJ=U=CJ^*$, and therefore $J=J^*$. Combining this with $J^*J=P$ gives $J^2=P$.
		
		Finally, since $J$ is a conjugate-partial isometry, it satisfies $JJ^*J=J$. As $J=J^*$, this becomes $J^3=J$. Using $P=J^2$, we consequently obtain $JP=JJ^2=J^3=J$ and $PJ=J^3=J$. Thus, $JP=PJ=J$.
	\end{proof}

	\begin{proposition}
		Let $\mathscr{E}$ be a Hilbert $C^*$-module equipped with a
		conjugation $C$, and let $T\in\mathbb{L}(\mathscr{E})$ be
		semiregular. Suppose that $T=U|T|$ is the polar decomposition of $T$. 
		If $T$ is $C$-symmetric, then so is $U$.
	\end{proposition}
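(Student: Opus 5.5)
We prove this with the uniqueness clause of Lemma \ref{lemXu}. We produce a second decomposition $T = U'Q$ of the required form, with $Q=|T|$ and $U' = CU^*C$, and conclude that $U'=U$.

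Since $T$ is $C$-symmetric, $T = CT^*C$. By Lemma \ref{lemXu}(ii), $T^* = U^*|T^*|$, so
\[
T = CU^*|T^*|C = (CU^*C)(C|T^*|C).
\]
Recall that $C|T^*|C = |T|$ whenever $T$ is $C$-normal (Definition \ref{moxesk}(i)), and a $C$-symmetric operator is $C$-normal. The verification is quick: $C TT^* C = C(CT^*C)(CTC)C = T^*T$, and uniqueness of square roots in $\mathbb{L}(\mathscr{E})$ then gives $C|T^*|C = |T|$. This yields
\[
T = U'|T|, \qquad U' := CU^*C = U^C \in \mathbb{L}(\mathscr{E}),
\]
and $U'$ is linear and adjointable, since $\cdot^C$ maps $\mathbb{L}(\mathscr{E})$ into itself.

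Next we check the side condition $U'^*U' = P_{\overline{\ran(T^*)}}$. We compute $U'^*U' = (CUC)(CU^*C) = CUU^*C$. By Lemma \ref{lemXu}(ii), $UU^* = P_{\overline{\ran(T)}}$, so Lemma \ref{lem:C-projection} gives $CUU^*C = P_{C(\overline{\ran(T)})}$. By Proposition \ref{prop:closed-range-C-adjoint}(i) applied to $T^*$, we have $C(\overline{\ran(T)}) = \overline{\ran((T^*)^C)}$. Since $(T^*)^C = CTC = (CT^*C)^* = T^*$ by $C$-symmetry, this closure equals $\overline{\ran(T^*)}$. Hence $U'^*U' = P_{\overline{\ran(T^*)}}$. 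As a consequence $U'$ is a partial isometry, because $U'^*U'$ is a projection.

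The uniqueness part of Lemma \ref{lemXu} now gives $U' = U$, that is, $CU^*C = U$, so $U$ is $C$-symmetric.

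The main care point is the projection identity $CUU^*C = P_{\overline{\ran(T^*)}}$. It relies on $C$ mapping $\overline{\ran(T)}$ onto $\overline{\ran(T^*)}$, and that is exactly where $C$-symmetry is used in the form $CTC = T^*$. A secondary point is that the uniqueness statement of Lemma \ref{lemXu} is applied with the same positive factor $|T|$, which we have arranged.
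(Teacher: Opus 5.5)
Your proof is correct and follows the paper's argument. Both rewrite $T=CT^*C=(CU^*C)(C|T^*|C)=(CU^*C)|T|$ using $C|T^*|C=|T|$, check that $CU^*C$ has initial projection $P_{\overline{\ran(T^*)}}$ because $C$ maps $\overline{\ran(T)}$ onto $\overline{\ran(T^*)}$, and conclude by uniqueness of the polar decomposition; your explicit computation $(CU^*C)^*(CU^*C)=CUU^*C=P_{C(\overline{\ran(T)})}$ via Lemma~\ref{lem:C-projection} simply spells out a step the paper asserts more briefly.
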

	
	\begin{proof}
		Since $T$ is $C$-symmetric, we have $T^*T = CTC\,CT^*C = C(TT^*)C$. Taking the unique positive square roots, 
		we obtain $|T| = C|T^*|C$. On the other hand, the polar decomposition of $T$ gives $T^*=U^*|T^*|$. 
		Hence,$T = CT^*C = CU^*C\,C|T^*|C = CU^*C\,|T|$. 
		
		We now verify that $CU^*C$ has the same initial submodule as $U$. 
		Since $U$ is the partial isometry in the polar decomposition of $T$, 
		its initial projection is the projection onto 
		$\overline{\ran(T^*)} = \overline{\ran(|T|)}$. 
		The final submodule of $U$ is $\overline{\ran(T)}$. 
		Since $T=CT^*C$, we have 
		$C\bigl(\overline{\ran(T)}\bigr) 
		= \overline{\ran(T^*)}$. The initial submodule of 
		$CU^*C$ is 
		$C\bigl(\overline{\ran(U)}\bigr) 
		= C\bigl(\overline{\ran(T)}\bigr) 
		= \overline{\ran(T^*)}$. Hence, $CU^*C$ is a partial 
		isometry with the same initial submodule as $U$. By the uniqueness of the 
		polar decomposition, $U=CU^*C$. Hence, $U$ is $C$-symmetric.
	\end{proof}
	
	It can be easily verified that if an operator $U=CJ$ is the product of two $\sharp$-conjugations $C$ and $J$, then $U$ is unitary and is both $C$-symmetric and $J$-symmetric; see \cite[Lemma 3.2]{G4}. Therefore, the condition of being $C$-symmetric in the following corollary is not a restrictive assumption. 
	
	\begin{corollary}\label{2con}
		Let $\mathscr{A}$ be a $C^*$-algebra, let $\mathscr{E}$ be a Hilbert
		$\mathscr{A}$-module equipped with a $\sharp$-conjugation $C$, and let
		$U\in\mathbb{L}(\mathscr{E})$ be a $C$-symmetric unitary. Then $U$ is
		the product of two $\sharp$-conjugations.
	\end{corollary}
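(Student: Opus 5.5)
The plan is to obtain the factorization directly from Theorem~\ref{thm:decomposition}, specialized to the case where $T=U$ is a unitary.

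\textbf{Step 1: $U$ is semiregular and its polar decomposition is trivial.} Since $U$ is unitary, $\ran(U)=\ran(U^*)=\mathscr{E}$. Both ranges are closed and trivially orthogonally complemented, so $U$ is semiregular. Moreover, $|U|=(U^*U)^{1/2}=I$. Hence $U=U\cdot I$ is a polar decomposition of $U$, with initial submodule $\overline{\ran(U^*)}=\mathscr{E}$ and initial projection $P=U^*U=I$. By the uniqueness in Lemma~\ref{lemXu}, this is \emph{the} polar decomposition.

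\textbf{Step 2: apply Theorem~\ref{thm:decomposition}.} This gives a conjugate-partial isometry $J$ with $U=CJ$ and $J^*J=P=I$. Because $U$ is $C$-symmetric, the final clause of that theorem gives $J=J^*$ and $J^2=P=I$. Alternatively, and more directly, set $J:=CU$. Then $C$-symmetry $U=CU^*C$ yields
\[
J^*=U^*C=CUC\,C=CU=J,
\]
and unitarity yields $J^2=J^*J=U^*CCU=U^*U=I$.

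\textbf{Step 3: check that $J$ is a $\sharp$-conjugation.} It remains to verify the defining identity of Definition~\ref{ours}, namely that $J$ is conjugate-linear, bounded, involutive, and satisfies $\langle Jx,Jy\rangle=\langle x,y\rangle^\sharp$ for all $x,y\in\mathscr{E}$.
\begin{itemize}
\item Conjugate-linearity and boundedness are clear, since $J$ is the composition of the conjugate-linear isometry $C$ with the linear operator $U$.
\item Involutivity is $J^2=I$ from Step 2.
\item For the isometry property, compute
\[
\langle Jx,Jy\rangle=\langle CUx,CUy\rangle=\langle Ux,Uy\rangle^\sharp=\langle U^*Ux,y\rangle^\sharp=\langle x,y\rangle^\sharp .
\]
\end{itemize}
So $J$ is a $\sharp$-conjugation, and $U=CJ$ because $C^2=I$. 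This exhibits $U$ as a product of the two $\sharp$-conjugations $C$ and $J$.

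\textbf{Main obstacle.} The only delicate point is to avoid relying on the uniqueness of the polar decomposition in the $C^*$-module setting, or on the abstract notion of conjugate-partial isometry. The explicit choice $J=CU$ sidesteps this. The one identity that needs care is $J^*=U^*C$ in the sense of $\sharp$-adjoints. This holds because, for $x,y\in\mathscr{E}$,
\[
\langle CUx,y\rangle=\langle Ux,Cy\rangle^\sharp=\langle x,U^*Cy\rangle^\sharp,
\]
and this is exactly the defining relation for the $\sharp$-adjoint.
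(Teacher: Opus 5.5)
Your proof is correct. Step 1 and the first half of Step 2 follow the paper's proof exactly. The paper notes that $U=U|U|$ is the polar decomposition with $P=I$, applies Theorem~\ref{thm:decomposition} to get $J$ with $U=CJ$ and $J^*J=I$, and then uses the $C$-symmetric clause to get $J^2=I$. What you carry out in Step 3 is a more elementary argument. You define $J:=CU$ outright and check, using only $CU=U^*C$ (from $U=CU^*C$) and $U^*U=I$, that $J^*=J$, $J^2=I$, and $\langle Jx,Jy\rangle=\langle x,y\rangle^\sharp$. This is the same operator the paper produces, since $U=CJ$ and $C^2=I$ force $J=CU$. Indeed, in the proof of Theorem~\ref{thm:decomposition}, the operator $J=VC$ built from the polar decomposition of $CTC$ satisfies $CVC=U$, so $VC=CU$. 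Your route therefore avoids several things: Lemma~\ref{lemXu}, the uniqueness argument, and the notion of conjugate-partial isometry. It also states explicitly the isometry identity that the paper leaves implicit in ``Therefore, $J$ is a conjugation''. There it follows from $J^*J=I$ via $\langle Jx,Jy\rangle=\langle x,J^*Jy\rangle^\sharp$. The paper's route instead presents the corollary as the unitary case of its Garcia--Putinar-type decomposition, which is its structural point. Step 1 is not needed for your direct argument.
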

	
	\begin{proof}
		Since $U$ is unitary, $U=U|U|$ is its polar decomposition. In particular, $U$ is 
		semiregular and $\overline{\ran(U^*)}
		=\overline{\ran(U)}=\mathscr{E}$. Applying Theorem~\ref{thm:decomposition} to $U$, we obtain a 
		conjugate-partial isometry $J$ such that $U=CJ$ and 
		$J^*J=P$, where $P$ is the projection onto 
		$\overline{\ran(U^*)}$. Therefore, $P=I_{\mathscr{E}}$. Thus, $J^*J=I_{\mathscr{E}}$.
		
		Moreover, $U$ is $C$-symmetric, so the last assertion of 
		Theorem~\ref{thm:decomposition} gives $J^2=P=I_{\mathscr{E}}$. 
		Therefore, $J$ is a conjugation on $\mathscr{E}$.
	\end{proof}
	
	The following example shows that, in contrast to the Hilbert space setting, not every unitary on a Hilbert $C^*$-module is a product of two $\sharp$-conjugations.
	
	\begin{example}
		Let $\mathscr{A}=\mathbb{C}\oplus\mathbb{C}$ and equip $\mathscr{A}$ with the $*$-conjugate-automorphism $(a,b)^\sharp=(\overline{b},\overline{a})$. Let $\mathscr{E}=\mathscr{A}$ be the standard right Hilbert $\mathscr{A}$-module, with 
		$$\langle x,y\rangle=x^*y=(\overline{x_1}y_1,\overline{x_2}y_2),$$
		where $x=(x_1,x_2)\in\mathscr{A}$ and $y=(y_1,y_2) \in\mathscr{A}$.
		
		We claim that the unitary operator $U:\mathscr{E}\to\mathscr{E}$ given by $$U(z_1,z_2)=(z_1,-z_2)$$ cannot be written as a product of two $\sharp$-conjugations. First, we determine all $\sharp$-conjugations on $\mathscr{E}$. 
		
		Put $e_1=(1,0)$ and $e_2=(0,1)$. Let $C$ be a $\sharp$-conjugation. Note that $\langle Ce_1,Ce_1\rangle=\langle e_1,e_1\rangle^\sharp=e_2$. If $Ce_1=(\gamma, \alpha)$, then $\langle (\gamma, \alpha), (\gamma, \alpha)\rangle=(|\gamma|^2, |\alpha|^2)=e_2$, whence $\gamma=0$. Thus, there is an $\alpha\in\mathbb{T}$ such that $Ce_1=(0,\alpha)$. Similarly, there is a $\beta\in\mathbb{T}$ such that $Ce_2=(\beta,0)$. Since $C$ is involutive and $\sharp$-conjugate-linear,
		\[
		e_1=C^2e_1=C(0,\alpha)=C(e_2\alpha)=Ce_2\,\overline{\alpha}=(\beta,0)\overline{\alpha}.
		\]
		Consequently, $\beta\overline{\alpha}=1$, and hence $\beta=\alpha$. Therefore every $\sharp$-conjugation has the form
		\[
		C_\alpha(z_1,z_2)=C_\alpha(z_1e_1+z_2e_2)=\overline{z_1}(0,\alpha)+\overline{z_2}(\alpha,0)
		=(\alpha\overline{z_2},\alpha\overline{z_1}),\qquad \alpha\in\mathbb{T}.
		\]
		Conversely, it is immediate that each $C_\alpha$ is a $\sharp$-conjugation.
		
		Now let $C_\alpha$ and $C_\beta$ be any two $\sharp$-conjugations. Then
		\[
		(C_\alpha C_\beta)(z_1,z_2)
		=
		C_\alpha(\beta\overline{z_2},\beta\overline{z_1})
		=
		(\alpha\overline{\beta}z_1,\alpha\overline{\beta}z_2).
		\]
		Hence $C_\alpha C_\beta=\alpha\overline{\beta}\,I_{\mathscr{E}}$. Thus every product of two $\sharp$-conjugations is a scalar multiple of the identity.
		
		On the other hand, $Ue_1=e_1$ and $Ue_2=-e_2$, so $U$ is not a scalar multiple of $I_{\mathscr{E}}$. Thus, $U$ is a unitary operator on $\mathscr{E}$ which is not a product of two $\sharp$-conjugations.
	\end{example}
	
	\section{Concluding remarks}\label{sec6}
	
	Extending the notion of conjugation to Hilbert $C^*$-modules requires finding a suitable counterpart of the identity 
	\[
	\langle Cx,y\rangle=\overline{\langle x,Cy\rangle} 
	\] 
	for conjugate-linear maps on a Hilbert $\mathscr{A}$-module $\mathscr{E}$. In this paper, we adopt one possible approach. Another apparently natural way is to define the adjoint of a bounded conjugate-linear operator $S: \mathscr{E}\to \mathscr{E}$ as a bounded conjugate-linear operator $S^*: \mathscr{E}\to \mathscr{E}$ satisfying 
	\[
	\langle Sx,y\rangle=\langle x,S^*y\rangle^*. 
	\] 
	This leads naturally to defining a conjugation on $\mathscr{E}$ as a conjugate-linear map $C:\mathscr{E}\to\mathscr{E}$ such that $C^2=I_\mathscr{E}$ and $\langle Cx,y\rangle=\langle x, Cy\rangle^*$ for all $x,y\in\mathscr{E}$.

	In this way, we expect the validity of the crucial formula
	\[
	C(xa)=C(x)a^* \qquad \text{for all } x\in\mathscr{E},\ a\in\mathscr{A}. 
	\] 
	as it represents the natural extension of conjugate-linearity to the setting of Hilbert $C^*$-modules. In this direction we have the next result. 
	
	Let $\mathscr{E}$ be Hilbert $\mathscr{A}$-module. We denote the linear span of the set $\{\langle x,y\rangle : x,y\in \mathscr{E}\}$ by $\langle \mathscr{E}, \mathscr{E} \rangle$. Then $\overline{\langle \mathscr{E}, \mathscr{E} \rangle}$ is a closed two-sided ideal in $\mathscr{A}$. In addition, $\mathscr{E}$ is called \emph{full} if $\overline{\langle \mathscr{E}, \mathscr{E} \rangle}=\mathscr{A}$.
	
	\begin{theorem}\label{th conj}
		Let $\mathscr{E}$ be a Hilbert $\mathscr{A}$-module admitting at least one conjugation in the sense of $\langle Cx,y\rangle=\langle x, Cy\rangle^*$. Then the following statements are equivalent.
		\begin{enumerate}
			\item For all conjugations $C$ on $\mathscr{E}$ it holds that 
			$$C(xa) = C(x)a^*\qquad(x\in\mathscr{E}, a\in\mathscr{A}).$$
			\item For some conjugation $C$ on $\mathscr{E}$ it holds that 
			$$C(xa) = C(x)a^*\qquad(x\in\mathscr{E}, a\in\mathscr{A}).$$
			\item For every $x,y\in \mathscr{E}$ and $a\in \mathscr{A}$, 
			$$
			\langle xa,y\rangle=\langle x,ya^*\rangle.
			$$
			\item The closed ideal $\overline{\langle \mathscr{E}, \mathscr{E} \rangle}$ is contained in the center $\mathcal{Z}(\mathscr{A})$ of $\mathscr{A}$.
		\end{enumerate}
	\end{theorem}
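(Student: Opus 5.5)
We will prove the cycle (1) $\Rightarrow$ (2) $\Rightarrow$ (3) $\Rightarrow$ (4) $\Rightarrow$ (1). The implication (1) $\Rightarrow$ (2) is immediate, since by hypothesis at least one conjugation exists.

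For (2) $\Rightarrow$ (3), fix a conjugation $C$ with $C(xa)=C(x)a^*$. We use the defining relation $\langle Cu,v\rangle=\langle u,Cv\rangle^*$ together with $C^2=I_\mathscr{E}$. First we compute
\[
\langle C(x)a^*,y\rangle=a\langle Cx,y\rangle=a\langle x,Cy\rangle^*=\bigl(\langle x,Cy\rangle a^*\bigr)^*=\langle x,(Cy)a^*\rangle^*.
\]
On the other hand, the hypothesis gives
\[
\langle C(x)a^*,y\rangle=\langle C(xa),y\rangle=\langle xa,Cy\rangle^*.
\]
Comparing the two expressions yields $\langle xa,Cy\rangle=\langle x,(Cy)a^*\rangle$ for all $x,y$. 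Since $C$ is surjective, $Cy$ ranges over all of $\mathscr{E}$, and this is exactly (3).

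For (3) $\Rightarrow$ (4), rewrite the left side of (3) as $\langle xa,y\rangle=a^*\langle x,y\rangle$ and the right side as $\langle x,ya^*\rangle=\langle x,y\rangle a^*$. Replacing $a$ by $a^*$, (3) says that $a\langle x,y\rangle=\langle x,y\rangle a$ for every $a\in\mathscr{A}$. Hence each inner product lies in $\mathcal{Z}(\mathscr{A})$. The center is a closed subspace, so the closed linear span $\overline{\langle\mathscr{E},\mathscr{E}\rangle}$ is contained in it.

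For (4) $\Rightarrow$ (1), let $C$ be any conjugation and fix $x\in\mathscr{E}$, $a\in\mathscr{A}$. We show that $w:=C(xa)-C(x)a^*$ is zero by testing it against every $y$. The first term gives
\[
\langle C(xa),y\rangle=\langle xa,Cy\rangle^*=\bigl(a^*\langle x,Cy\rangle\bigr)^*=\langle x,Cy\rangle^*a.
\]
The second term gives
\[
\langle C(x)a^*,y\rangle=a\langle Cx,y\rangle=a\langle x,Cy\rangle^*.
\]
Because $\langle x,Cy\rangle^*=\langle Cy,x\rangle\in\langle\mathscr{E},\mathscr{E}\rangle\subseteq\mathcal{Z}(\mathscr{A})$, these two expressions coincide. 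Hence $\langle w,y\rangle=0$ for all $y$, and taking $y=w$ gives $w=0$.

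None of these steps is a real obstacle; each is a short inner-product computation. The only point requiring care is bookkeeping: with this alternative definition $C$ is only conjugate-linear over $\mathbb{C}$, with no a priori $\mathscr{A}$-compatibility. So we must never move $a$ through $C$ except where hypothesis (2) is invoked, and we must apply the adjoint relation $\langle Cu,v\rangle=\langle u,Cv\rangle^*$ with the correct placement of the star. Surjectivity of $C$, which follows from $C^2=I_\mathscr{E}$, is used in (2) $\Rightarrow$ (3). Nondegeneracy of the inner product is used in (4) $\Rightarrow$ (1).
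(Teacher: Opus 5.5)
Your proof is correct and follows essentially the same route as the paper: the same cycle (1)$\Rightarrow$(2)$\Rightarrow$(3)$\Rightarrow$(4)$\Rightarrow$(1), built from the relation $\langle Cx,y\rangle=\langle x,Cy\rangle^*$, $C^2=I_{\mathscr{E}}$, centrality of the inner products, and nondegeneracy. The differences are cosmetic. In (2)$\Rightarrow$(3) you test against $Cy$ and invoke surjectivity of $C$, whereas the paper writes $\langle xa,y\rangle=\langle Cy,C(xa)\rangle$ directly. Your (3)$\Rightarrow$(4) compares $a^*\langle x,y\rangle$ with $\langle x,y\rangle a^*$ in one step.
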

	
	\begin{proof}
		The implication $(1) \Rightarrow (2)$ is immediate.
		
		For $(2) \Rightarrow (3)$, assume that $C$ is a conjugation satisfying $C(xa)=C(x)a^*$. Then for arbitrary $x,y\in\mathscr{E}$ and $a\in\mathscr{A}$, using the properties of a conjugation, we obtain
		$$
		\langle xa,y\rangle=\langle C(y), C(xa)\rangle= \langle C(y), C(x)a^*\rangle=\langle C(y), C(x)\rangle a^*=\langle x ,y\rangle a^*=\langle x,ya^*\rangle.
		$$
		Thus, (3) holds.
		
		For $(3) \Rightarrow (4)$, suppose (3) is valid. Then for every $a\in\mathscr{A}$ and $x,y\in\mathscr{E}$, we have
		\begin{align*}
			a\langle x,y\rangle&= \bigl(\langle y,x\rangle a^*\bigr)^*=\langle y,xa^*\rangle^*=\langle xa^*,y\rangle = \langle x,ya\rangle=\langle x,y\rangle a.
		\end{align*}
		Since $\overline{\langle \mathscr{E}, \mathscr{E} \rangle}$ is the closed linear span of all inner products $\langle x,y\rangle$, it follows that $\overline{\langle \mathscr{E}, \mathscr{E} \rangle}\subseteq \mathcal{Z}(\mathscr{A})$.
		
		For $(4) \Rightarrow (1)$, let $C$ be an arbitrary conjugation on $\mathscr{E}$. Take $x,y\in\mathscr{E}$ and $a\in\mathscr{A}$. Then
		$$
		\langle C(xa), y\rangle=\langle C(y), xa\rangle=\langle C(y), x\rangle a=a\langle C(y), x\rangle = a\langle C(x), y\rangle = \langle C(x)a^*, y\rangle,
		$$
		Since $y\in\mathscr{E}$ was arbitrary, the nondegeneracy of the inner product yields $C(xa)=C(x)a^*$. As $C$ was arbitrary, (1) follows.
	\end{proof}
	
	As a consequence of this theorem, we obtain the following result, which shows that the condition 
	\[
	C(xa)=C(x)a^* \qquad (x\in\mathscr{E},\ a\in\mathscr{A}) 
	\] 
	essentially forces the $C^*$-algebra $\mathscr{A}$ to be commutative.
	
	\begin{corollary}
		Let $\mathscr{E}$ be a Hilbert $\mathscr{A}$-module admitting a conjugation $C$ in the sense of $\langle Cx,y\rangle=\langle x,Cy\rangle^*$. If $\mathscr{A}$ is commutative, then $C(xa)=C(x)a^*$ for every $x\in \mathscr{E}$ and $a\in \mathscr{A}$. Conversely, if $\mathscr{E}$ is full and the above equality holds, then $\mathscr{A}$ is commutative.
	\end{corollary}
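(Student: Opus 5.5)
Both directions follow directly from Theorem \ref{th conj}, since a conjugation $C$ in the sense of $\langle Cx,y\rangle=\langle x,Cy\rangle^*$ exists by hypothesis and that theorem therefore applies.

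\emph{First assertion.} Suppose $\mathscr{A}$ is commutative. Then $\mathcal{Z}(\mathscr{A})=\mathscr{A}$, so trivially $\overline{\langle \mathscr{E},\mathscr{E}\rangle}\subseteq\mathcal{Z}(\mathscr{A})$. This is condition (4) of Theorem \ref{th conj}, and the implication (4) $\Rightarrow$ (1) gives $C(xa)=C(x)a^*$ for every conjugation $C$, in particular for the given one. If one prefers to avoid the theorem, the chain of equalities in its proof of (4) $\Rightarrow$ (1) can be run directly: $\langle C(xa),y\rangle=\langle Cy,x\rangle a=a\langle Cx,y\rangle=\langle C(x)a^*,y\rangle$, using commutativity in the middle. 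Nondegeneracy of the inner product then finishes the argument.

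\emph{Converse.} Assume $\mathscr{E}$ is full and $C(xa)=C(x)a^*$ holds for the given $C$. This is condition (2) of Theorem \ref{th conj}, so by (2) $\Rightarrow$ (4) we have $\overline{\langle \mathscr{E},\mathscr{E}\rangle}\subseteq\mathcal{Z}(\mathscr{A})$. Fullness says the left-hand side is all of $\mathscr{A}$, so $\mathscr{A}\subseteq\mathcal{Z}(\mathscr{A})$ and $\mathscr{A}$ is commutative.

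There is no substantive obstacle. The only point needing care is that $\mathcal{Z}(\mathscr{A})$ is closed, which it is as an intersection of kernels of the continuous maps $b\mapsto ab-ba$. This justifies passing from the inclusion of the span $\langle \mathscr{E},\mathscr{E}\rangle$ in the center, which is what the computation in (3) $\Rightarrow$ (4) actually produces, to the inclusion of its closure.
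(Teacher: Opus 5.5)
Your proposal is correct and follows the paper's route: the paper gives no separate proof and presents the corollary as an immediate consequence of Theorem~\ref{th conj}, exactly as you do. Commutativity gives condition (4), hence (1); in the converse, (2) $\Rightarrow$ (4) together with fullness gives $\mathscr{A}\subseteq\mathcal{Z}(\mathscr{A})$. Your remark that $\mathcal{Z}(\mathscr{A})$ is closed correctly fills the small step the paper leaves implicit in (3) $\Rightarrow$ (4).
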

	
	If we naturally consider a Hilbert space $\mathscr{H}$ as a right Hilbert $C^*$-module over $\mathbb{B}(\mathscr{H})$, then there exists no conjugation as shown below.
	
	\begin{example} \label{Esk11}
		Let $\mathscr{H}$ be a Hilbert space, with inner product linear in the first variable and conjugate-linear in the second, as usual. Let $\overline{\mathscr{H}}$ denote the conjugate Hilbert space of $\mathscr{H}$. Its elements are written $\overline{x}$, where $x\in\mathscr{H}$. We set $\overline{x}+\overline{y}:=\overline{x+y}$ and $\lambda\overline{x}:=\overline{\overline{\lambda}x},\,\,\lambda\in\mathbb{C}$. Define the right action of $\mathbb{B}(\mathscr{H})$ by $\overline{x}\cdot T:=\overline{T^*x},\,\,T\in\mathbb{B}(\mathscr{H})$, and
		$$\langle \overline{x},\overline{y}\rangle_1:=x\overline{\otimes} y.$$
		Then
		$\langle \overline{x},\overline{y}\rangle_1^*=(y\overline{\otimes} x)=\langle \overline{y},\overline{x}\rangle_1$, and
		$\langle \overline{x},\overline{x}\rangle_1=x\overline{\otimes} x\geq0$. Moreover, $\langle\overline{x},\overline{x}\rangle_1=0$ implies that $x=0$. 
		The module compatibility relations are satisfied, since
		$$\langle \overline{x}\cdot T,\overline{y}\rangle_1=\langle \overline{T^*x},\overline{y}\rangle_1=T^*x\overline{\otimes} y=
		T^*(x\overline{\otimes} y),$$ and $$\langle \overline{x},\overline{y}\cdot T\rangle_1=\langle\overline{x},\overline{T^*y}\rangle_1=x\overline{\otimes} T^*y=(x\overline{\otimes} y)T.$$
		Moreover, $\|\overline{x}\|_1^2=\|x\overline{\otimes} x\|=\|x\|^2$, since $(x \overline{\otimes} x)x=\langle x,x\rangle x=\|x\|^2x$.
		
		Therefore, $\overline{\mathscr{H}}$ is complete in the induced norm and is a Hilbert $\mathbb{B}(\mathscr{H})$-module.
		
		Assume that a conjugation $C:\overline{\mathscr{H}}\to\overline{\mathscr{H}}$ satisfying $\langle Sx,y\rangle=\langle x,S^*y\rangle^*$ exists. Choose a unit vector $\overline{x}\in\overline{\mathscr{H}}$. For every $\overline{y}\in\overline{\mathscr{H}}$, we have
		$\langle C\overline{x},C\overline{y}\rangle_1=\langle \overline{y},\overline{x}\rangle_1$. Write $C\overline{x}=\overline{u}$ and $C\overline{y}=\overline{v}$ for some $u,v\in\mathscr{H}$. Then, for every $z\in\mathscr{H}$,
		$$
		(u\overline{\otimes} v)z
		=
		\langle C\overline{x},C\overline{y}\rangle_1z
		=
		\langle \overline{y},\overline{x}\rangle_1z
		=
		(y\overline{\otimes} x)z.
		$$
		Hence, $u\overline{\otimes} v=y\overline{\otimes} x$. Consequently, using the definition of the rank-one operator,
		$$
		\langle z, v\rangle u = \langle z, x\rangle y
		\qquad (z\in\mathscr{H}).
		$$
		Taking $z=x$ and using $\|x\|=1$, we obtain $\langle x, v\rangle u = y$. For the fixed vector $\overline{x}$, the vector $u$ is fixed. Hence, every $y\in\mathscr{H}$ belongs to the one-dimensional subspace $\mathbb{C} u$. Therefore, $\dim\mathscr{H}\leq1$, contrary to the hypothesis $\dim\mathscr{H}\geq2$. Thus, $\overline{\mathscr{H}}$ has no conjugation.
	\end{example}

	Moreover, even when we consider $\mathscr{A}$ as a Hilbert $C^*$-module over itself, we arrive at the following conclusion.
	
	\begin{theorem}\label{th conj2}
		Let $\mathscr{A}$ be a unital $C^*$-algebra, equipped with its standard Hilbert $\mathscr{A}$-module structure given by $\langle x,y\rangle=x^*y$ for $x,y\in \mathscr{A}$. Let $T\in\mathbb{L}_c(\mathscr{A})$ in the sense that $\langle Tx,y\rangle=\langle x,T^*y\rangle^*$ for some map $T^*$. Then the range of $T$ satisfies $$\mathcal{R}(T)\subseteq \mathcal{Z}(\mathscr{A}),$$
		and $T$ has the form $$T(x)=x^*a$$ for all $x\in A$, where $a\in \mathcal{Z}(\mathscr{A})\cap{\rm Ann}(\mathcal{I})$, with $\mathcal{I}=\overline{\sspan}\{u[x,y]v:u,v,x,y\in \mathscr{A}\}$ is a closed two-sided ideal of $\mathscr{A}$. Moreover, $T^*=T$. 
		
		In particular, if $\mathscr{A}$ is a noncommutative $C^*$-algebra with $\mathcal{Z}(\mathscr{A})=\mathbb{C}$, then $T=0$.
	\end{theorem}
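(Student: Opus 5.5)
The plan is to reduce everything to the single element $a:=T(1)$ by testing the defining identity against the unit, and then to exploit the resulting algebraic constraints on $a$. Written out in $\mathscr{A}$, the adjoint relation $\langle Tx,y\rangle=\langle x,T^*y\rangle^*$ reads
\[
(Tx)^*y=(T^*y)^*x\qquad(x,y\in\mathscr{A}).
\]

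\textbf{Step 1: both maps are multiplications by adjoints.} I first put $y=1$ to get $(Tx)^*=(T^*1)^*x$, i.e.\ $Tx=x^*b$ with $b:=T^*(1)$. Next I put $x=1$ to get $(T1)^*y=(T^*y)^*$, i.e.\ $T^*y=y^*a$ with $a:=T(1)$. Substituting both formulas back into the identity gives
\[
b^*xy=a^*yx\qquad(x,y\in\mathscr{A}).
\]
Taking $y=1$ yields $b^*x=a^*x$ for all $x$, and $x=1$ then gives $a=b$. Hence $Tx=x^*a$ and $T^*y=y^*a$, so $T^*=T$ once $a$ is known to be central. Moreover
\[
a^*(xy-yx)=0\qquad(x,y\in\mathscr{A}),
\]
so $a^*$ left-annihilates every commutator.

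\textbf{Step 2: $a^*$ annihilates the ideal $\mathcal{I}$.} Using the derivation identity $[x,yv]=[x,y]v+y[x,v]$ together with $a^*yw=a^*wy$, I would show that
\[
a^*[x,y]v=-a^*y[x,v]=-a^*[x,v]y=0 .
\]
Then
\[
a^*u[x,y]v=a^*\bigl([u,[x,y]v]+[x,y]vu\bigr)=0 .
\]
By linearity and continuity this gives $a^*\mathcal{I}=0$. Since $\mathcal{I}$ is $*$-closed, taking adjoints also gives $\mathcal{I}a=0$. This places $a$ in ${\rm Ann}(\mathcal{I})$ once centrality is known.

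\textbf{Step 3 (the main obstacle): centrality of $a$.} The annihilation property concerns only commutators, not $a$ itself, so I need an extra trick. Fix $x$ and set $c:=a^*x-xa^*$. This is a commutator, so $c\in\mathcal{I}$. I then compute
\[
cc^*=c\,(x^*a-ax^*)=(cx^*)a-(ca)x^* .
\]
Here $cx^*\in\mathcal{I}$ and $\mathcal{I}a=0$, so both terms vanish. Thus $cc^*=0$, which forces $c=0$. Therefore $a^*$, and hence $a$, lies in $\mathcal{Z}(\mathscr{A})$.

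Consequently every value $Tx=x^*a$ is central, because
\[
y\,x^*a-x^*a\,y=a\,[y,x^*]=\bigl([x,y^*]a^*\bigr)^*=0,
\]
using Step 2 and $\mathcal{I}a=0$. This gives $\mathcal{R}(T)\subseteq\mathcal{Z}(\mathscr{A})$, and with $a$ central, $T^*=T$ follows from Step 1.

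\textbf{Step 4: the special case.} If $\mathcal{Z}(\mathscr{A})=\mathbb{C}$, then $a=\lambda 1$. If $\mathscr{A}$ is noncommutative, then $\mathcal{I}\neq0$, and $\lambda\mathcal{I}=0$ forces $\lambda=0$. Hence $T=0$.
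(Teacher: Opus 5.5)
Your proof is correct and follows essentially the paper's route. You evaluate at $1$ to get $Tx=x^*a$ and $T^*y=y^*a$, lift $a^*[x,y]=0$ to $a^*\mathcal{I}=\mathcal{I}a=0$ using commutator identities, and then use the $C^*$-identity to force centrality. Your centrality step, $cc^*=0$ for $c=[a^*,x]$, needs only $\mathcal{I}a=0$ and is slightly leaner than the paper's argument via $a\mathcal{I}=\mathcal{I}a=0$ and $\mathcal{I}\cap{\rm Ann}(\mathcal{I})=\{0\}$. Also, $T^*=T$ already follows from Step 1 without any centrality hypothesis.
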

	
	\begin{proof}
		Set $a=T1$. Since $T\in\mathbb{L}_c(\mathscr{A})$, there exists $T^*\in\mathbb{L}_c(\mathscr{A})$ such that $\langle Tx,y\rangle=\langle T^*y,x\rangle$ for all $x,y\in \mathscr{A}$. Taking $x=1$ gives $\langle T1,y\rangle=\langle T^*y,1\rangle$, hence $a^*y=(T^*y)^*$, so 
		\begin{align}\label{(1)}
			T^*y=y^*a, \qquad y\in \mathscr{A}.
		\end{align}
		Now for arbitrary $x,y\in \mathscr{A}$, we have $$\langle Tx,y\rangle=\langle T^*y,x\rangle=(T^*y)^*x=a^*yx.$$ Since $\langle Tx,y\rangle=(Tx)^*y$, it follows that $(Tx)^*=a^*x$, and therefore 
		\begin{align}\label{(2)}
			Tx=x^*a, \qquad x\in \mathscr{A}.
		\end{align}
		Also, from the equality $a^*yx=(Tx)^*y$ and using \eqref{(2)}, we get $a^*xy=a^*yx$, so $a^*[x,y]=0$ for all $x,y\in \mathscr{A}$. Taking adjoints yields $[x,y]a=0$ for all $x,y\in \mathscr{A}$.
		
		Let $$\mathcal{I}=\overline{\sspan}\{u[x,y]v:u,v,x,y\in \mathscr{A}\}.$$
		Using the identity $[x,y]z=[x,yz]-y[x,z]$, we obtain $[x,y]va=[x,yv]a-y[x,v]a=0$, hence $u[x,y]va=0$ for all $u,v,x,y\in \mathscr{A}$. Therefore, $\mathcal{I}a=0$. Taking adjoints gives $a^*\mathcal{I}=0$.
		
		We next prove that $a\mathcal{I}=0$. Let $z\in \mathcal{I}$. Since $\mathcal{I}$ is a two-sided ideal, $az\in \mathcal{I}$. By $a^*\mathcal{I}=0$, we have $a^*(az)=0$. Then $\|az\|^2=\|(az)^*(az)\|=\|z^*a^*az\|=0$, so $az=0$. Thus, $a\mathcal{I}=0$. Combining with $\mathcal{I}a=0$ yields $a\mathcal{I}=\mathcal{I}a=0$.
		
		Since $\mathscr{A}/\mathcal{I}$ is commutative, we have $[a,x]\in \mathcal{I}$ for all $x\in \mathscr{A}$. Let $J=\{z\in \mathscr{A}:z\mathcal{I}=\mathcal{I}z=0\}$ be the annihilator of $\mathcal{I}$. Then from $[a,x]\in \mathcal{I}$ and $a\mathcal{I}=\mathcal{I}a=0$, it follows that $[a,x]\in J$ as well. Hence, $[a,x]\in \mathcal{I}\cap J=\{0\}$, so $[a,x]=0$ for all $x\in \mathscr{A}$. Thus, $a\in \mathcal{Z}(\mathscr{A})$.
		
		Now using $a\mathcal{I}=\mathcal{I}a=0$ and $a\in \mathcal{Z}(\mathscr{A})$, we compute $[T(x),y]=[x^*a,y]=a[x^*,y]=0$ for all $x,y\in \mathscr{A}$. Hence, $\mathcal{R}(T)\subseteq \mathcal{Z}(\mathscr{A})$. Also, \eqref{(1)} and \eqref{(2)} we immediately get $T^*=T$.
		
		Finally, suppose $\mathscr{A}$ is noncommutative with $\mathcal{Z}(\mathscr{A})=\mathbb{C}$. Then from $T(x)=x^*a$ and $a\in \mathcal{Z}(\mathscr{A})=\mathbb{C}$, we have $T(x)=\lambda x^*$ for some $\lambda\in\mathbb{C}$. Choose $x,y\in \mathscr{A}$ with $xy\neq yx$. From $\langle Tx,y\rangle=\langle T^*y,x\rangle$ and $T^*=T$, we get $\langle \lambda x^*,y\rangle=\langle \lambda y^*,x\rangle$, which simplifies to $\bar{\lambda}xy=\bar{\lambda}yx$. Since $xy\neq yx$, this forces $\lambda=0$. Therefore, $T=0$.
	\end{proof}
	
	Thus, not only do we obtain the rather surprising statement that every operator $T\in\mathbb{L}_c(\mathscr{A})$ is self-adjoint, but also, even for the simple noncommutative finite-dimensional $C^*$-algebra $\mathbb{M}_n(\mathbb{C})$ of all $n \times n$ complex matrices, there exists no nonzero conjugate-linear adjointable operator, and hence, a fortiori, no conjugation at all. 
	
	The following example shows that, contrary to the case discussed above, this situation does not arise under the approach adopted in this paper.
	
	\begin{example}\label{th conj3}
		Define the map
		\[
		\sharp:\mathbb{M}_n(\mathbb{C})\to \mathbb{M}_n(\mathbb{C}),\qquad (a_{ij})^\sharp=(\bar{a_{ij}})
		\]
		It is seen that $\sharp$ is a $*$-conjugate-automorphism on $\mathbb{M}_n(\mathbb{C})$. Let $\mathscr{E}=\mathbb{M}_n(\mathbb{C})$ be the Hilbert $C^*$-module over $\mathbb{M}_n(\mathbb{C})$ via the inner product $\langle A,B\rangle=A^*B$ for $A,B\in \mathbb{M}_n(\mathbb{C})$. Set
		\[
		C: \mathbb{M}_n(\mathbb{C}) \to \mathbb{M}_n(\mathbb{C}), \qquad C(A) = \overline{A},
		\]
		where $\overline{A}$ is the entrywise complex conjugate of $A$. Then, it is easy to verify that $C$ is a conjugation (in the sense of Definition \ref{ours}) on $\mathscr{E}$.
	\end{example}
	
	In the case where the $C^*$-algebra $\mathscr{A}$ is commutative, the $*$-operation satisfies all the conditions of Definition \ref{Esk0}, so in this case, one can define the conjugation in terms of the $*$-operation as $\langle Cx,y\rangle =\langle x,Cy\rangle^*$.
	
	
	\medskip
	\noindent \textit{Author Contributions Statement.} All authors wrote, edited, and reviewed the manuscript.
	
	\medskip
	\noindent \textit{Conflict of Interest Statement.} On behalf of the authors, the corresponding author states that there is no conflict of interest.
	
	\medskip
	\noindent\textit{Data Availability Statement.} Data sharing is not applicable to this article as no datasets were generated or analyzed during the current study.
	
	\medskip
	\noindent \textit{Funding Declaration.} This research received no funding.

	\medskip

\end{document}